\title{Finiteness properties of asymptotically rigid handlebody groups}
\author{Sergio Domingo-Zubiaga}
\date{April 2025}
\begin{document}

\newtheorem{theor}{Theorem}[section]
\newtheorem{lem}[theor]{Lemma}
\newtheorem{cla}[theor]{Claim}
\newtheorem{sublem}[theor]{Sublemma}
\newtheorem{cor}[theor]{Corollary}
\newtheorem{prop}[theor]{Proposition}

\newtheorem{theorem}{Theorem}
\renewcommand*{\thetheorem}{\Roman{theorem}}
\newtheorem{corollary}[theorem]{Corollary}

\theoremstyle{remark}
\newtheorem{ex}[theor]{Example}
\newtheorem{rem}[theor]{Remark}
\theoremstyle{definition}
\newtheorem{defi}[theor]{Definition}

\def\Map{\textnormal{Map}}
\def\QA{\textnormal{QAut}}
\def\Aut{\textnormal{Aut}}
\def\So{\textnormal{Sym}_o}
\def\AI{\textnormal{AIsom}}
\def\capp{\textnormal{cap}}
\def\push{\textnormal{push}}
\def\forget{\textnormal{forget}}
\def\twist{\textnormal{twist}}
\def\X{\chi}
\def\Xc{\mathfrak{X}}
\def\R{\mathfrak{R}}
\def\P{\mathcal P}
\def\Pc{\mathcal P_d}
\def\B{\mathcal B}
\def\C{\mathcal C}
\def\T{\mathcal T}
\def\D{\mathcal D}
\def\K{\mathcal K}
\def\A{\mathcal A}

\def\dl{\textnormal{lk}^{\downarrow}}

\def\b{\beta}
\def\a{\alpha}

\def\BB{\mathbb B}
\def\OD{\textnormal{OD}_d}
\def\RR{\mathbb R}
\def\ZZ{\mathbb Z}
\def\SS{\mathbb S}
\def\NN{\mathbb N}
\def\DD{\mathbb D}
\def\H{\mathcal{H}}

\def\HTB{\textnormal{HT}\BB}
\def\HTD{\textnormal{HTD}_d}
\def\DTD{\textnormal{DT}\D}
\def\TCD{\textnormal{TC}\D}
\def\CD{\textnormal{C}\D}
\def\TH{\textnormal{TH}}
\def\Hc{\textnormal{H}}
\def\Sym{\textnormal{Sym}}
\def\id{\textnormal{id}}
\def\int{\textnormal{int}}
\def\Lk{\textnormal{lk}}
\def\Star{\textnormal{star}}
\def\Image{\textnormal{Im}}

\newcommand{\rom}[1]{\uppercase\expandafter{\romannumeral #1\relax}}

\begin{abstract}
   We introduce asymptotically rigid mapping class groups of handlebodies and determine their finiteness properties, which vary depending on the space of ends of the underlying handlebody. As it turns out, in some cases, the homology of these groups coincides with the stable homology of handlebody groups, as studied by Hatcher and Wahl.
\end{abstract}

\maketitle

\textit{Asymptotically rigid mapping class groups} of manifolds have garnered significant attention in recent years (see \cite{Funar,Fun-Kap1, Fun-Kap2, BrThom}) for a variety of reasons. First, they sometimes appear as extensions of \textit{Higman-Thompson groups} by direct limits of mapping class groups of compact manifolds. In certain cases, this implies that their homology coincides with the \textit{stable homology} of the underlying manifold (see \cite[Section 10]{Asymp}). Additionally, in the context of surfaces, these groups form countable subgroups of the so-called\textit{ big mapping class groups}, an active area of research at the intersection of geometric group theory, low-dimensional topology, topological groups, and related fields.

A major motivation for studying asymptotically rigid mapping class groups stems from the study of their finiteness properties (see \cite{Asymp, SurfaceHoughton, BrThom, Fun-Kap1, Fun-Kap2, Ara-Fun}). Recall that a group is said to be of \textit{type $F_n$} if it admits a classifying space with finite $n$-skeleton, and is of \textit{type $F_\infty$} if it is $F_n$ for every $n$. A group $G$ is said to be of \textit{type $FP_n$} if $\ZZ$, viewed as a $G$-module, has a projective resolution whose first $n$ terms are finitely generated.

Notably, the finiteness properties of asymptotically rigid mapping class groups are often determined by the space of ends of the underlying manifold. Specifically, as shown in \cite{Asymp}, under certain general assumptions on the manifolds involved, if the end space of the underlying manifold is a Cantor set, then the associated asymptotically rigid mapping class group is of type $F_\infty$. In stark contrast, when the end space consists of $r$ isolated points, the group is of type $F_{r-1}$  but not $FP_r$, as shown in \cite{SurfaceHoughton}.

The objective of this paper is to extend the results of  \cite{Asymp, SurfaceHoughton} to the setting of handlebodies. To state our results, we provide a rough description of the relevant objects, postponing technical details to Section \ref{Sect1}. Let $O$ be any handlebody and $Y$ a genus-one handlebody, referred to as a \textit{piece}, with $d+1 \geq 2$ spots. The \textit{tree handlebody} $\T_{d,r}(O,Y)$ is constructed by first performing boundary-connect sums of $r$ copies of $Y$ to $O$ along the spots of $O$, and then inductively boundary-connect summing $d$ copies of $Y$ to the manifold obtained in the previous step. Notably, the end space of $\T_{d,r}(O,Y)$ is a Cantor set if $d\geq2$, and consists of $r$ isolated points if $d=1$. The \textit{asymptotically rigid handlebody group} $\H_{d,r}(O,Y)$ is the group of homeomorphisms of $\T_{d,r}(O,Y)$, up to isotopy, that \textit{eventually} map pieces to pieces in a \textit{trivial} manner, see Section \ref{Sect1} for precise definitions.

Our first theorem establishes an analog of \cite{Asymp} in this setting, specifically for the case where the end space is a Cantor set:

\begin{theorem}\label{Main1}
Given any handlebody $O$, a genus-one handlebody $Y$, $d\geq 2$ and $r\geq1$, the group $\H_{d,r}(O,Y)$ is of type $F_\infty$.
\end{theorem}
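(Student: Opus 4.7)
The strategy is to mirror the template of \cite{Asymp}: construct a contractible (or highly connected) CW-complex on which $\H_{d,r}(O,Y)$ acts, verify that cell stabilizers are of type $F_\infty$, and apply Brown's criterion.

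I would first introduce the poset $\mathcal{X}$ of \emph{admissible compact subhandlebodies} of $\T_{d,r}(O,Y)$: finite unions of pieces of the canonical tree decomposition containing $O$, ordered by inclusion, with $\H_{d,r}(O,Y)$ acting on the geometric realization by asymptotic rigidity. The stabilizer of an admissible $S$ fits into an extension whose kernel is a subgroup of the handlebody mapping class group of $S$ relative to its boundary spots, with quotient a product of asymptotically rigid handlebody groups acting on the complementary tree handlebodies. Handlebody groups of compact handlebodies are of type $F_\infty$ by classical results (and stably so by Hatcher--Wahl), so an induction on the number of refinement levels would reduce the stabilizer analysis to this known input.

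Next, I would filter $\mathcal{X}$ by cocompact subcomplexes $\mathcal{X}_n$ spanned by subhandlebodies whose boundary spots lie in the first $n$ levels of the piece-tree. The main obstacle is to show that the connectivity of $\mathcal{X}_n$ grows without bound as $n \to \infty$. I would attack this via a descending-link Morse argument in the style of Bestvina--Brady: the descending link at a vertex is built from a complex of compatible piece-refinements, and must be shown to be highly connected. The hypothesis $d \geq 2$ is essential here, since it ensures that every piece admits at least two further refinements, providing the combinatorial room required for a coloring/arc-shuffling argument analogous to the high-connectivity lemmas for complexes of suited decompositions in \cite{Asymp}. Once these connectivity bounds are established, Brown's criterion yields that $\H_{d,r}(O,Y)$ is of type $F_\infty$.

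I expect the hard step to be the quantitative connectivity of the descending links, since handlebodies carry a richer local structure than the surface or general-manifold cases treated in \cite{Asymp}: one must combine the combinatorics of the piece-tree with disk-complex-type arguments inside each piece to push through the Morse argument without losing connectivity at each refinement step.
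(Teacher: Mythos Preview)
Your overall architecture---Brown's criterion applied to a filtered complex with a Morse function, with descending-link connectivity as the crux---matches the paper. But there is a genuine gap in the stabilizer analysis. Your vertex set consists of suited subhandlebodies $S$ themselves, and you describe $\mathrm{Stab}(S)$ as an extension with quotient ``a product of asymptotically rigid handlebody groups acting on the complementary tree handlebodies.'' This is circular: those quotient groups are precisely instances of $\H_{d,r'}(O',Y)$, which is what you are trying to prove is $F_\infty$, and the complementary tree handlebodies are not smaller in any way that supports an induction on ``refinement levels.'' The paper avoids this by taking as vertices equivalence classes of \emph{pairs} $[(M,f)]$ with $f\in\H_{d,r}(O,Y)$, where $(M_1,f_1)\sim(M_2,f_2)$ iff $f_2^{-1}f_1$ sends $M_1$ to $M_2$ and is rigid outside $M_1$. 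With this Stein--Farley construction, the stabilizer of $[(M,\id)]$ consists of elements that send $M$ to $M$ \emph{and are rigid in the complement}, hence is a finite-index subgroup of the spot-permuting handlebody group $\H_o(M,A)$. That group is $F_\infty$ directly from McCullough's result that compact handlebody groups are $F_\infty$, together with the capping and Birman sequences; no circularity, no induction.

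On the descending links: you correctly locate the difficulty and correctly note that disc-complex input is needed, but the paper's argument is substantially more specific than a coloring or arc-shuffling lemma. The descending link at $[(M,\id)]$ is a complete join over a \emph{piece complex} $\mathcal{P}_d(M_g,A)$ (isotopy classes of $Y^d$-shaped subhandlebodies meeting the spots correctly), and the connectivity of $\mathcal{P}_d$ is obtained through a chain of auxiliary complexes---doubly-tethered discs, tether-curve-discs, curve-discs, handles, tethered handles, handle-tether-discs---bottoming out in the coconnected disc complex of Hatcher--Wahl. Each link in the chain uses one of: Quillen fiber arguments, bad-simplex arguments, or explicit flow retractions. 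The role of $d\ge 2$ is not that pieces admit further refinements, but that in the height filtration both the genus $g=g_O+k$ and the number of spots $|A|=r+k(d-1)$ of a height-$k$ suited handlebody tend to infinity with $k$, so the connectivity bound $\min\{\lfloor (g-3)/2\rfloor,\lfloor (|A|+1)/(2d-1)-2\rfloor,|A|-2\}$ goes to infinity.
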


Our second result addresses the case where the end space consists of $r$ isolated points, serving as a handlebody analog of the main result of \cite{SurfaceHoughton}:

\begin{theorem}\label{Main2}
Given any handlebody $O$, a genus-one handlebody $Y$, and $r\geq1$, the group $\H_{1,r}(O,Y)$ is of type $F_{r-1}$ but not $FP_{r}$.
\end{theorem}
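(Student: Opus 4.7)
The plan is to adapt the strategy of \cite{SurfaceHoughton} to handlebodies. I would construct a simplicial complex $X$ on which $\H_{1,r}(O,Y)$ acts --- concretely, a Stein-type complex built from admissible disk systems separating off tails along each of the $r$ ends of $\T_{1,r}(O,Y)$ --- together with a natural height function giving a cocompact filtration. The key properties to establish are that $X$ is contractible (via directedness of the underlying admissible-system poset) and that simplex stabilizers have controllable finiteness properties. The proof then proceeds by Brown's criterion combined with Bestvina-Brady Morse theory, reducing finiteness statements for $\H_{1,r}(O,Y)$ to an analysis of stabilizers and descending links.

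For the positive direction ($F_{r-1}$), the stabilizer of a vertex is an extension of the mapping class group of the ``core'' compact handlebody (of type $F_\infty$ by classical results) by a product, indexed by the $r$ ends, of direct limits of handlebody groups of tail pieces. The latter is of type $F_\infty$ by a Hatcher-Wahl-style stability argument. By Brown's criterion, it then suffices to show that descending links are $(r-2)$-connected at vertices of sufficiently large height. Because $\T_{1,r}(O,Y)$ has $r$ independent ray-like ends, descending links can be arranged to decompose as joins $L_1 \ast \cdots \ast L_r$, with $L_i$ parametrising admissible modifications near the $i$th end; non-emptiness of each $L_i$ then delivers the required $(r-2)$-connectivity and hence $F_{r-1}$.

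For the negation of $FP_r$, the homological form of Brown's criterion requires essential non-zero classes in $\widetilde H_{r-1}$ of descending links at arbitrarily high heights. Choosing $X$ so that each $L_i$ is a non-connected $0$-dimensional complex --- reflecting multiple independent ``minimal'' admissible moves near each end, in direct analogy with Brown's original Houghton computation --- each $L_i$ has non-trivial $\widetilde H_0$, and the external product yields a non-trivial class in $\widetilde H_{r-1}(L_1 \ast \cdots \ast L_r)$. These classes remain essentially non-trivial under the filtration inclusions, and combine into infinitely many independent contributions obstructing $FP_r$.

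The main obstacle will be the explicit choice of complex $X$ and the verification of the descending-link decomposition, particularly the homological computation that $\widetilde H_0(L_i) \neq 0$ at each end. The difficulty is that in the linear $d = 1$ regime the most naive complexes of subhandlebodies produce contractible descending links; the right setup is a Stein-type complex adapted to the asymptotic rigidity framework of Section \ref{Sect1}, in which descending links decompose correctly as joins of discrete sets. Identifying this complex and executing the connectivity computations for handlebody disk systems --- handling compression-disk subtleties, isotopy classes, and boundary-parallel disks using Hatcher-Wahl disk-complex machinery --- will be the central technical task.
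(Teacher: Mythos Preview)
Your high-level strategy --- Brown's criterion applied to a Stein-type complex with a height function, reducing to stabilizers and descending links --- is exactly the paper's route. But the central technical claim, that descending links ``can be arranged to decompose as joins $L_1 * \cdots * L_r$'' indexed by the $r$ ends, is where the proposal breaks down, and both directions of your argument rest on it.

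In the Stein--Farley complex the paper uses (and in any natural variant), a vertex at height $k$ corresponds to a compact suited handlebody $M$ of genus $g_0+k$ with exactly $r$ spots, and the descending link is governed by the \emph{piece complex} $\mathcal P_1(M,A)$: vertices are isotopy classes of genus-one subhandlebodies containing one spot and one separating disc, with simplices given by disjointness. Each such piece does single out one spot, so there is a simplicial map to the $(r-1)$-simplex on the spots --- but this map is \emph{not} a join projection. Two pieces using distinct spots need not admit disjoint representatives: their handles and separating discs live in the common core and can be inextricably linked. Hence $\mathcal P_1(M,A)$ is only a subcomplex of $P_1 * \cdots * P_r$, not the full join, and your $(r-2)$-connectivity argument via ``each $L_i$ nonempty'' does not go through. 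The paper instead proves that $\mathcal P_1(M,A)$ is weakly Cohen--Macaulay of dimension $r-1$ by a long chain of auxiliary complexes (doubly-tethered discs, tether-curve-discs, handle-tether-discs, \ldots), ultimately resting on Hatcher--Wahl connectivity of the coconnected disc complex; this is the real work, and there is no visible shortcut.

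For the non-$FP_r$ direction your instinct is closer to what actually happens, but again not via a global join. The paper shows the descending link is a \emph{complete join complex} over $\mathcal P_1(M,A)$ (in the Hatcher--Wahl sense), with infinite fibres. Over a single maximal $(r-1)$-simplex of $\mathcal P_1(M,A)$ one then picks two points in each fibre to produce an essential $(r-1)$-sphere; this is a local join over one simplex, not a decomposition of the whole link. Your proposal to make each $L_i$ a disconnected $0$-complex would require the descending link to be $(r-1)$-dimensional \emph{and} a global join of discrete sets, which it is not.

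A smaller point: your description of stabilizers (core mapping class group extended by a product of direct limits along the ends) does not match the Stein--Farley setup. Cell stabilizers here are finite-index subgroups of spot-permuting handlebody groups of compact suited handlebodies --- no direct limits appear --- and their $F_\infty$ follows from McCullough together with the capping and Birman sequences.
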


Following the approach of \cite{Fun-Kap2}, we deduce that in certain cases, the homology of $\H_{d,r}(O,Y)$ coincides with the stable homology of the handlebody groups, as studied by Hatcher and Wahl \cite{Disk}.

\begin{corollary}\label{Corst2}

Let $O$ be any handlebody, and let $Y$ be a genus-one handlebody. Let $M_g$ be any handlebody of genus $g\geq 2i+4$. Then 
$$H_i(\H_{2,1}(O,Y))\cong H_i(\H(M_g)).$$

\end{corollary}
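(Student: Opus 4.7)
The plan is to exhibit $\H_{2,1}(O,Y)$ as an extension of Thompson's group $V$ by a direct limit of handlebody groups, then combine the Szymik--Wahl acyclicity of $V$ with the Hatcher--Wahl homological stability theorem for handlebody groups. This is exactly the strategy of Funar--Kapoudjian \cite{Fun-Kap2} in the surface setting and of \cite[Section 10]{Asymp} in greater generality; Theorem \ref{Main1}, together with the structural description of $\H_{d,r}(O,Y)$ in Section \ref{Sect1}, should let the argument go through essentially verbatim.

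The first step is to construct a short exact sequence
\[
1 \longrightarrow K \longrightarrow \H_{2,1}(O,Y) \longrightarrow V \longrightarrow 1,
\]
where $V$ is Thompson's group and $K$ is a colimit of handlebody groups. The quotient map sends an asymptotically rigid homeomorphism $\varphi$ to the combinatorial bijection it eventually induces on the set of pieces of $\T_{2,1}(O,Y)$; because $d=2$ this extends to a self-homeomorphism of the Cantor end space and lies in $V$. The kernel $K$ is the direct union, over the directed poset of admissible compact submanifolds $N\subset \T_{2,1}(O,Y)$, of the handlebody groups $\H(N)$. Since each stabilization in this system is obtained by boundary-connect-summing with copies of the genus-one piece $Y$, the colimit identifies with the Hatcher--Wahl stabilization colimit $\varinjlim_g \H(M_g)$.

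Next, I invoke the theorem of Szymik--Wahl that Thompson's $V$ is acyclic (and in particular perfect, so it acts trivially on $H_\ast(K)$). The Lyndon--Hochschild--Serre spectral sequence
\[
E^2_{p,q} = H_p(V;H_q(K)) \;\Longrightarrow\; H_{p+q}(\H_{2,1}(O,Y))
\]
then collapses onto the $p=0$ column, giving $H_i(\H_{2,1}(O,Y)) \cong H_i(K) = \varinjlim_g H_i(\H(M_g))$ in every degree. Finally, Hatcher--Wahl's homological stability for handlebody groups \cite{Disk} yields that the stabilization map $H_i(\H(M_g)) \to H_i(\H(M_{g+1}))$ is an isomorphism for $g \geq 2i+4$, so the colimit is already reached at genus $2i+4$, which is the claimed bound.

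The main obstacle is the first step. One has to verify that the forgetful map $\H_{2,1}(O,Y) \to V$ is well-defined, surjective, and has the claimed kernel, and then to identify that kernel with the Hatcher--Wahl stabilization colimit rather than with some merely cofinal but differently-indexed system. The well-definedness and surjectivity are bookkeeping with the asymptotically rigid structure of Section \ref{Sect1}, but matching the colimit with the Hatcher--Wahl system is the genuine topological input: it requires recognizing the \emph{piece addition} used to build $\T_{2,1}(O,Y)$ as the boundary-connect-sum operation whose induced maps on handlebody groups are the stabilization maps proved to be eventual homology isomorphisms in \cite{Disk}.
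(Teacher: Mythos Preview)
Your strategy is the paper's: the short exact sequence with quotient $V$ is recorded in Subsection~\ref{SubRelThom}, the kernel is the direct limit $\H^c_{2,1}(O,Y)=\varinjlim \H(O_k)$ from Subsection~\ref{CompacSup}, and one then runs Lyndon--Hochschild--Serre together with Szymik--Wahl acyclicity and Hatcher--Wahl stability. The identification of the kernel with the stabilization colimit, which you flag as the main obstacle, is in fact immediate from the construction of $\T_{2,1}(O,Y)$ and is not where the work lies.

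The genuine gap is your assertion that ``$V$ is perfect, so it acts trivially on $H_\ast(K)$.'' Perfectness does not force trivial action on an abelian group: $A_5$ is perfect and acts faithfully on $\ZZ^5$. The paper's argument at this step is different and requires an extra input. It first shows that each $H_q(K)$ is \emph{finitely generated}---this already uses Hatcher--Wahl stability (to identify $H_q(K)$ with $H_q(\H(M_g))$ for a single large $g$) together with the fact that compact handlebody groups $\H(M_g)$ are of type $F_\infty$ (McCullough). Then $\Aut(H_q(K))$ is residually finite by Baumslag's theorem, and since $V$ is an \emph{infinite} simple group it admits no nontrivial homomorphism into a residually finite group; hence the conjugation action $V\to\Aut(H_q(K))$ is trivial. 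Only after this does acyclicity of $V$ collapse the spectral sequence. Note also that Theorem~\ref{Main1} plays no role here; the relevant finiteness input is $F_\infty$ for the compact groups $\H(M_g)$, not for $\H_{2,1}(O,Y)$.
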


\textbf{Plan of the paper.} 
Section \ref{Sect1} contains the definitions of $\T_{d,r}(O,Y)$ and 
$\H_{d,r}(O,Y)$, along with the base results on handlebodies and handlebody groups we will need. Section \ref{Sect2} introduces Brown's Theorems, which are used to prove the finiteness properties in Theorems \ref{Main1} and \ref{Main2}, by means of a cube complex on which $\H_{d,r}(O,Y)$ acts. The premises of these theorems are verified throughout Sections \ref{Sect2},  \ref{Sect3}, \ref{Sect3.5} and \ref{Sect4}. Particularly, Section \ref{Sect3} studies the connectivity of the \textit{descending links} in the cube complex using a series of simplicial complexes introduced there, Section \ref{Sect3.5} finishes the proof of Theorem \ref{Main1}, and Section \ref{Sect4} does so for Theorem \ref{Main2}. Section \ref{Sect5} studies the homology of asymptotically rigid handlebody groups, and proves Corollary \ref{Corst2}.

\textbf{Acknowledgments.} The author is indebted to Javier Aramayona for proposing the research questions, providing valuable insights into asymptotically rigid mapping class groups, and offering guidance on various aspects of this work. Thanks are due to Federico Cantero for his constructive suggestions, which contributed significantly to this paper. Appreciation is extended to Natalie Wahl for her insightful input on key points in Lemma \ref{m-lema} and Theorem \ref{Teo-dtd}. Gratitude is also expressed to Pablo Sánchez-Peralta for helpful conversations. The author acknowledges financial support from
the grant CEX2019-000904-S funded by MCIN/AEI/10.13039/501100011033
and from the grant PGC2018-101179-B-I00.

\section{Tree handlebodies and asymptotically rigid handlebody groups}\label{Sect1}

This section is dedicated to introducing \textit{tree handlebodies} and their \textit{asymptotically rigid handlebody groups}, following ideas from \cite[Section 3]{Asymp} and \cite[Section 2]{SurfaceHoughton}.

Let $N_1$ and $N_2$ be 3-manifolds with connected boundary. Choose any pair of closed discs $D_i\subset\partial N_i$ for $i=1,2$, an orientation-preserving homeomorphism $\mu_1:\DD^2\rightarrow D_1$, and an orientation-reversing homeomorphism $\mu_2:\DD^2\rightarrow D_2$. The \textit{boundary connected sum} $N_1\natural N_2$ is the result of identifying $D_1$ with $D_2$ via the homeomorphism $\mu_1\circ\mu_2^{-1}$. Up to homeomorphism, the result of such identification does not depend on the election of the $D_i$'s or the $\mu_i$'s.

\subsection{Handlebodies and handlebody groups}
This subsection gathers the theory of handlebodies that will be used throughout the paper. For further details, we refer the reader to \cite{Hensel}.

A \textit{solid torus} $T$ is 
the product $T:=\SS^1\times\DD^2$ of a circle and a closed disc. A \textit{handlebody} is the boundary connected sum $M=T_1\natural...\natural T_g\natural \DD^3$ with $g\in \NN$, and each $T_i$ a solid torus. The number $g$, called the \textit{genus} of $M$, is a homeomorphism invariant of the handlebody. We always require the genus of a handlebody to be greater than zero.

A \textit{spotted handlebody} $(M,A)$ is a handlebody $M$ together with a (possibly empty) subset $A\subset\partial M$ of \textit{spots}, i.e. pairwise disjoint closed discs. Given a spotted handlebody $(M,A)$, a \textit{spotted subhandlebody} $(N,B)$, is a spotted handlebody $N\subset M$ such that:

\begin{itemize}
    \item $\overline{\partial N\cap \int(M)}$ is a collection of closed discs, 
    \item for every spot $D \subset A$ either $D\cap \partial N=\emptyset$ or $D\subset \partial N$,
    \item $B=(\overline{\partial N\cap \int(M)})\cup (\partial N \cap A)$, i.e. the subhandlebody inherits spots from both the new discs formed in $\overline{\partial N\cap \int(M)}$ and discs in $A$ which are in $\partial N$.
\end{itemize}

Observe that the boundary connected sum of two spotted handlebodies is a spotted handlebody. We require homeomorphisms between spotted handlebodies to send spots to spots. 

The \textit{handlebody group} $\H(M,A)$ of the spotted handlebody $(M,A)$ is the group of homeomorphisms of $M$ fixing $A$ pointwise, up to isotopies fixing $A$ pointwise. We will often omit the set $A$ from notation for the sake of simplicity. 

Given a spotted handlebody $M$ with set of spots $A$, fix a set of parametrizations of its spots, i.e. a set of orientation-preserving homeomorphisms $\{\mu_i:\DD^2\rightarrow D_i\}_{D_i\subset A}$. The \textit{spot-permuting handlebody group} $\H_o(M,A)$ of the spotted handlebody $(M,A)$ is the group of isotopy classes (relative to $A$) of homeomorphisms of $M$ that respect these parametrizations, meaning that for any $\phi\in\H_o(M,A)$, there is a permutation $\sigma$ such that $\phi\circ \mu_i=\mu_{\sigma(i)}$ $\forall i$. Note that $\H(M,A)$ is a finite index subgroup of $\H_o(M,A)$.

Handlebody groups can be studied as subgroups of \textit{mapping class groups}. Given a compact surface $S$ we define the mapping class group $\Map(S)$ as the group of homeomorphisms of the surface fixing $\partial S$ pointwise, up to isotopies fixing $\partial S$ pointwise. Write $S_{(M,A)}:=\overline{\partial M\setminus A}$, which is a connected surface with boundary. There is a well-defined, injective map $$\R:\H(M,A)\rightarrow \Map(S_{(M,A)})$$ given by restriction to the boundary (see \cite[Section 3]{Hensel}). 

Given $p\in \partial M\setminus A$, $\H(M,A,p)$ is the group of homeomorphisms of $M$ fixing $A$ and the point $p$ pointwise, up to isotopies fixing $A$ and $p$ pointwise. The following exact sequences are the restriction to handlebody groups of the \textit{capping homomorphism} and the \textit{Birman exact sequence} (see \cite[Proposition 3.19, Theorem 4.6]{MCG}, \cite[Section 3]{Hensel}).

\begin{lem}\label{lem-sec} Given a spotted handlebody $(M,A)$, choose a spot $D\subset A$ and a point $p\in \int{(D)}$, write $A'=A\setminus {D}$. There are exact sequences

$$1\rightarrow \ZZ  \xrightarrow{\twist}  \H(M,A) 
 \xrightarrow{\capp} \H(M,A',p)\rightarrow 1,$$
$$1\rightarrow  \pi_1(S_{(M,A')},p) \xrightarrow{\push} \H(M,A',p) \xrightarrow{\forget} \H(M,A') \rightarrow1.$$ 

\end{lem}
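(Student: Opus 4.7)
The plan is to obtain both sequences by restricting the classical capping and Birman exact sequences for surface mapping class groups (as in \cite[Proposition 3.19, Theorem 4.6]{MCG}) to the handlebody subgroups, using the injective boundary-restriction map $\R:\H(M,A)\to\Map(S_{(M,A)})$ introduced in the paragraph preceding the lemma. I would define $\capp$ and $\forget$ as the natural maps obtained by forgetting the pointwise fixing of the spot $D$ (remembering only the marked point $p\in\int(D)$) and of the point $p$, respectively.

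The central technical task is to verify that the kernel generators on the surface side actually lie in the image of $\R$. For the first sequence, I would note that the Dehn twist on $\partial M$ around $\partial D$ is supported in a collar of $\partial D$ in $S_{(M,A)}$; extending it across a collar of $\partial M$ in $M$ produces an element of $\H(M,A)$ defining $\twist$. For the second sequence, the classical surface-level push of a loop $\gamma\in\pi_1(S_{(M,A')},p)$ is represented by a homeomorphism supported in a regular neighborhood of $\gamma$ in $S_{(M,A')}$, so the same collar-extension trick yields an element of $\H(M,A',p)$ defining $\push$. Once these two lifts are in hand, exactness at the middle terms of both sequences follows formally from the surface-level sequences together with injectivity of $\R$ on the relevant handlebody groups (which is established by the same boundary-restriction argument as for $\H(M,A)$).

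It then remains to verify surjectivity. For $\capp$: given $\phi\in\H(M,A',p)$, I would isotope a representative rel $A'$ and $p$ so that it fixes a small disc neighborhood of $p$ pointwise (possible up to a Dehn twist around that disc's boundary, which is exactly the kernel ambiguity), then identify this disc with $D$ to obtain a lift in $\H(M,A)$. For $\forget$: given $\phi\in\H(M,A')$, I would push $\phi(p)$ back to $p$ along any path in $S_{(M,A')}$ and extend the resulting surface isotopy across a collar of $\partial M$ to obtain a lift in $\H(M,A',p)$. \textbf{Main obstacle.} Nothing genuinely deep happens beyond the surface theory; the only technical ingredient, common to all constructions and surjectivity arguments, is the use of a collar neighborhood of $\partial M$ in $M$ to extend surface homeomorphisms and isotopies to the handlebody. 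Beyond this, the proof is a formal consequence of the classical surface sequences and the injectivity of $\R$.
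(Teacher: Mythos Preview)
The paper does not supply its own proof of this lemma; it simply states the result with references to \cite[Proposition 3.19, Theorem 4.6]{MCG} and \cite[Section 3]{Hensel}, after remarking that the sequences are the restrictions to handlebody groups of the classical capping and Birman sequences. Your proposal correctly fleshes out exactly this restriction argument via the injective boundary map $\R$ and the collar-extension trick, so it is both correct and in line with what the paper intends.
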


\subsection{Tree handlebodies}\label{Subsect1.2}
Following \cite{Asymp} and \cite{SurfaceHoughton}, in this subsection we introduce \textit{tree handlebodies} which, loosely speaking, are the result of gluing handlebodies via boundary connected sum in a tree-like manner.

For $d\geq 1$ let $Y^d$ be a genus-one spotted handlebody with $d+1$ spots. We enumerate the spots in $Y^d$ as $A_0,A_1,...,A_d$ and fix an orientation-preserving homeomorphism $\mu_0:\DD^2\rightarrow A_0$, and orientation-reversing homeomorphisms $\mu_i:\DD^2\rightarrow A_i$ for $i\geq 1$.
Let $O$ be a spotted handlebody with spots $B:=\{B_1,...B_r\}$. Fix orientation-reversing homeomorphisms $\nu_i:\DD^2\rightarrow B_i$ for each spot. We set $O_0=O$, and define a sequence $\{O_k\}_{k\geq 1}$ of spotted handlebodies in the following way:

\begin{enumerate}
    \item $O_1$ is the spotted handlebody that results from gluing $r$ copies of $Y^d$ to $O_0$, via boundary connected sum of each $Y^d$ with $O_0$ using the maps $\mu_0\circ\nu_i^{-1}$ for the gluing.
    
    \item For $k\geq 2$ each $O_k$ is the spotted handlebody that results from gluing a copy of $Y^d$ to each spot in $O_{k-1}$, via boundary connected sum of each $Y^d$ with $O_{k-1}$ using the maps $\mu_0\circ\mu_i^{-1}$ for the gluing.
\end{enumerate}

\begin{figure}[ht]
\includegraphics[width=11cm]{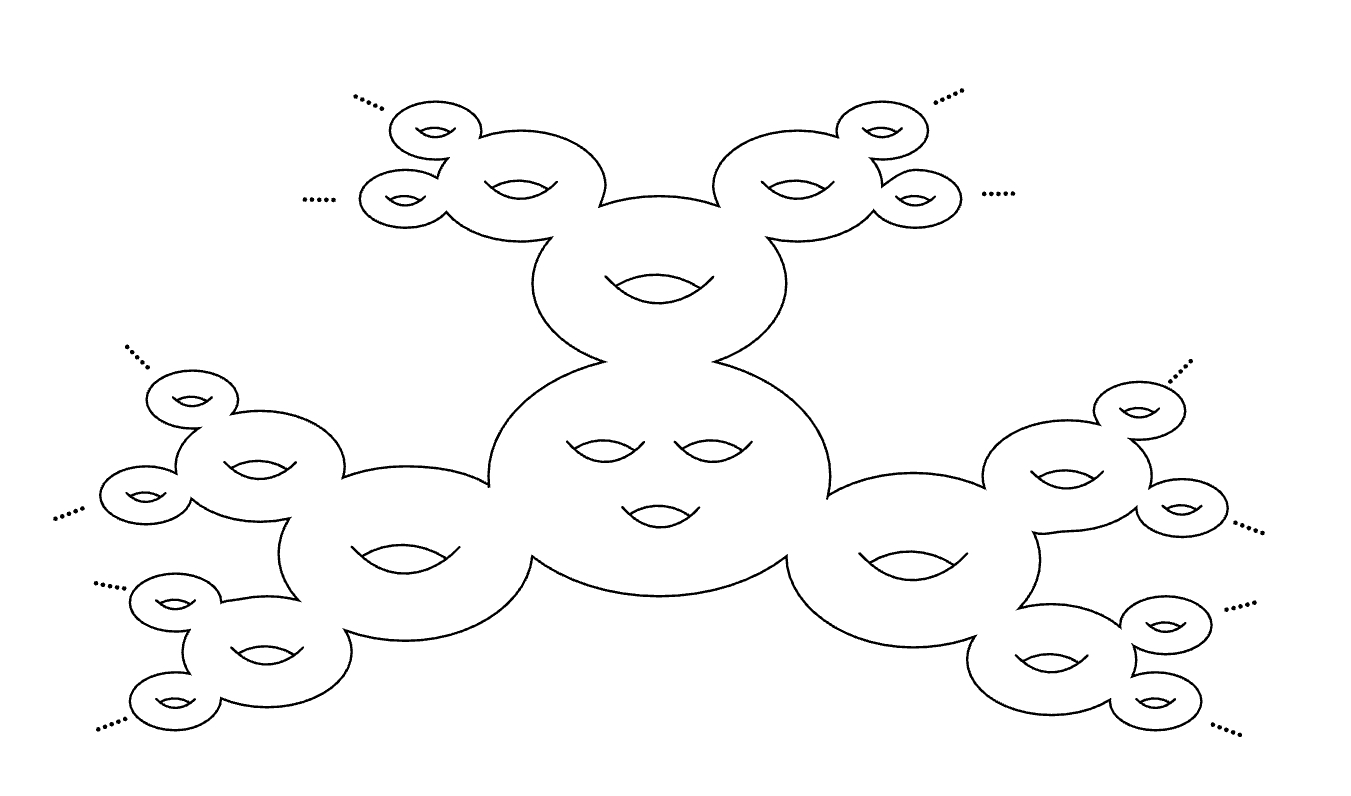}
\centering
\caption{The handlebody $O_4$, for $O$ a genus 3 handlebody, $d=2$, and $r=3$.}
\label{Ok}
\end{figure}

With respect to the above terminology we define the \textit{tree handlebody} $\T_{d,r}(O,Y)=\cup_{k=1}^\infty O_k$. Each of the connected components of $\overline{O_k\setminus O_{k-1}}$ for $k\geq 1$ is called a \textit{piece}. By construction, each piece is homeomorphic to $Y^d$. A spotted subhandlebody $M\subset\T_{d,r}(O,Y)$ is a \textit{suited handlebody} if it is connected and is the boundary connected sum of $O_0$ and finitely many pieces.

\begin{figure}[H]
\includegraphics[width=8cm]{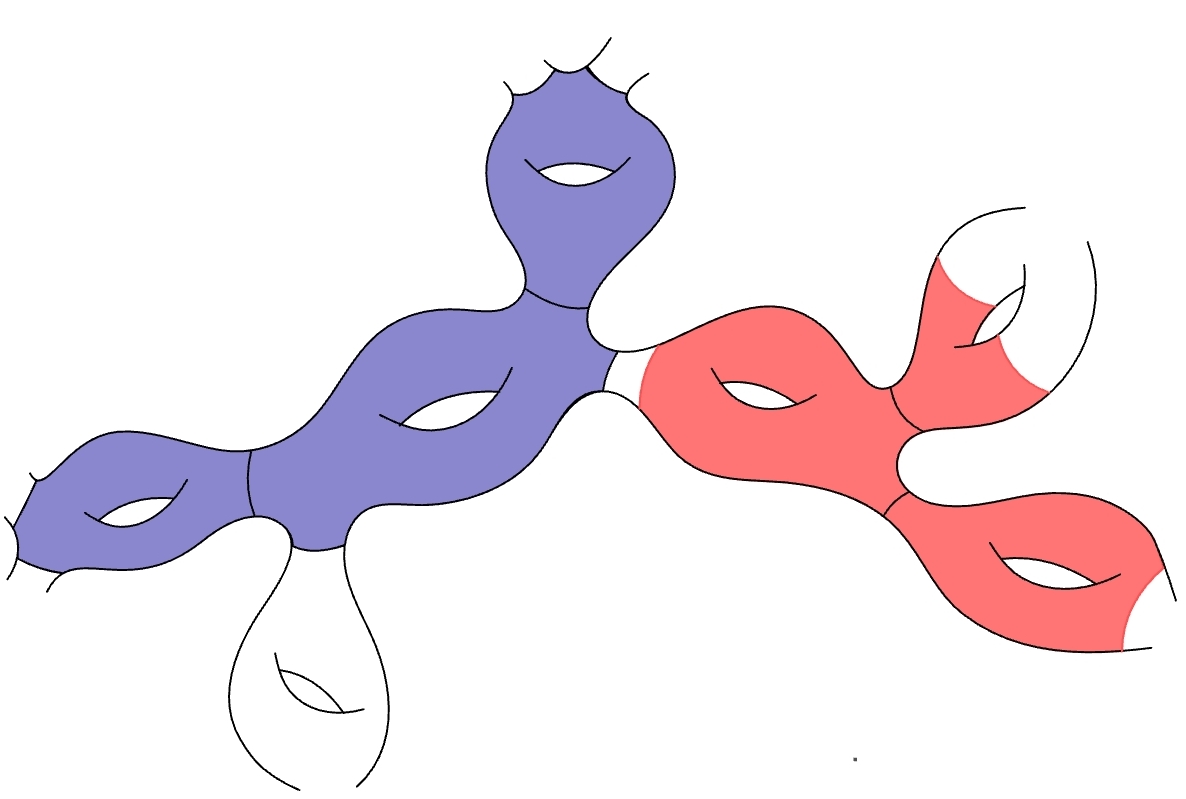}
\centering
\caption{In blue, a suited handlebody of $\T_{2,4}(O,Y)$, with $O$ a solid torus. In red, a spotted subhandlebody which is not suited.}
\label{subhandle}
\end{figure}

\subsection{Two types of tree handlebodies}\label{subsect1.3}

Tree handlebodies $\T_{d,r}(O,Y)$ with $d\geq2$ will be referred as \textit{Cantor handlebodies}, as their \textit{space of ends} of $\T_{d,r}(O,Y)$ is a Cantor set. We would like to remark that the boundary of a Cantor handlebody (as a manifold with boundary) is a Cantor surface in the terminology of \cite[Definition 3.1]{Asymp}. 

On the other hand, tree handlebodies $\T_{d,r}(O,Y)$ with $d=1$ will be referred as \textit{star handlebodies}. The space of ends of the star handlebody is a finite set of $r$ points.

\begin{figure}[ht]
\includegraphics[width=10cm]{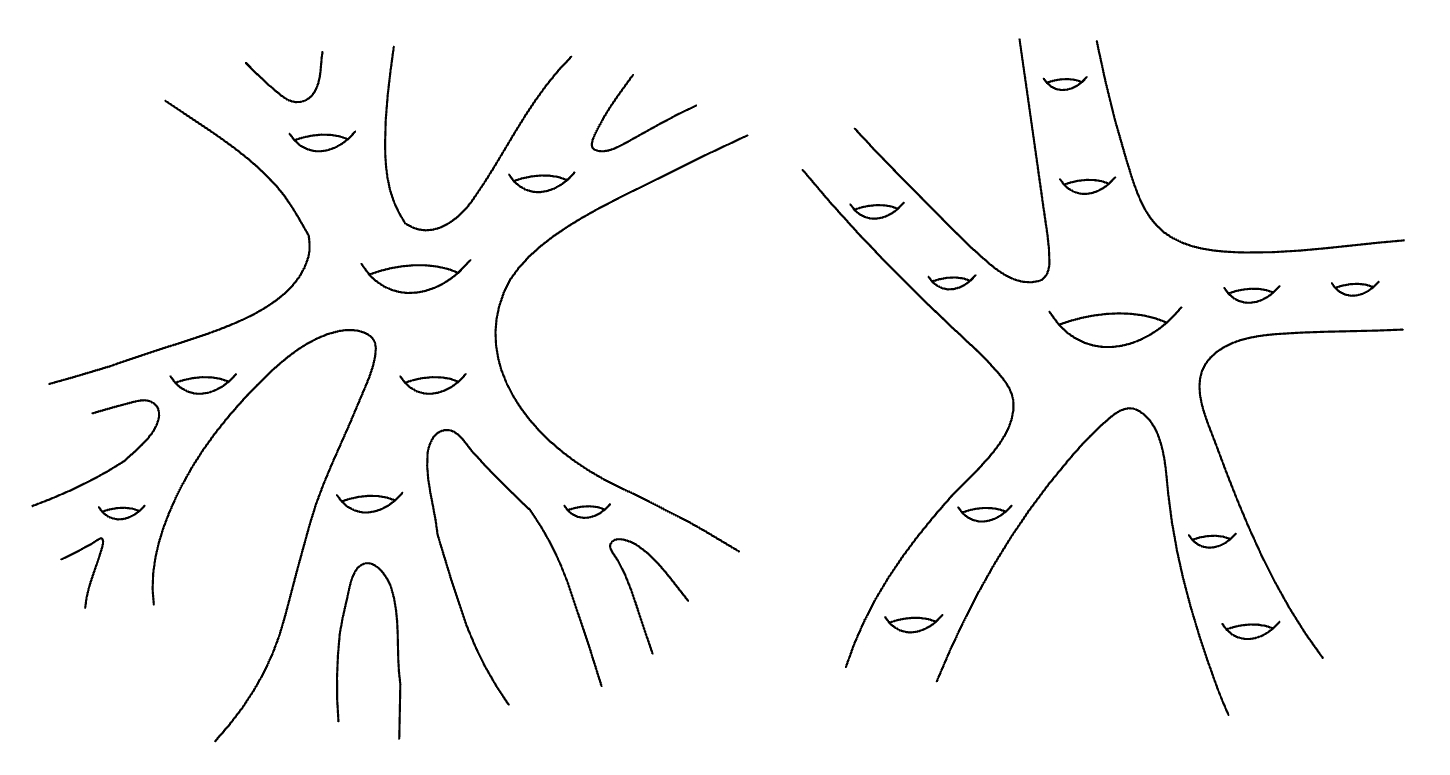}
\centering
\caption{A Cantor handlebody (left) and a star handlebody (right).}
\label{treevs}
\end{figure}

\subsection{Asymptotically rigid handlebody groups}\label{subsect1.4}

We now adapt the definition of rigid structure of \cite[Definition 3.8]{Asymp} to the setting of handlebodies. For each piece $Z$ of $\T_{d,r}(O,Y)$, choose a homeomorphism $i_Z:Z\rightarrow Y^d$ that agrees with the $\mu_i$'s used for the boundary connected sums.  

\begin{defi} (Preferred rigid structure). The set $\{i_Z\: : \: Z \text{ is a piece}\}$ is called the \textit{preferred rigid structure} on $\T_{d,r}(O,Y)$.
\end{defi}

\begin{defi}\label{rigid}(Asymptotically rigid homeomorphism).
Let $\T_{d,r}(O,Y)$ be a tree handlebody equipped with its preferred rigid structure. A homeomorphism $f:\T_{d,r}(O,Y)\rightarrow \T_{d,r}(O,Y)$ is \textit{asymptotically rigid} if there exists a suited handlebody $M\subset \T_{d,r}(O,Y)$ such that

\begin{enumerate}
    \item $f(M)$ is suited,
    
    \item $f$ is \textit{rigid outside of $M$}, that is, for every piece $Z$ outside of $M$, $f(Z)$ is also a piece and $i_{f(Z)}\circ f_{|Z}\circ i^{-1}_Z=\id_{Y^d}$.
\end{enumerate}

\end{defi}

The suited handlebody $M$ is called a \textit{defining handlebody} of $f$. Observe that any suited handlebody containing $M$ will be as well a defining handlebody for $f$. The composition $f_2\circ f_1$ of two asymptotically rigid self-homeomorphisms $f_1$ and $f_2$ is always defined up to enlarging the defining handlebody of $f_2$ so that it contains the image of the defining handlebody of $f_1$. This justifies the following definition:

\begin{defi}\label{def-Asy}(Asymptotically rigid handlebody group)
The \textit{asymptotically rigid handlebody group} $\H_{d,r}(O,Y)$ is the set of isotopy classes of asymptotically rigid self-homeomorphisms of $\T_{d,r}(O,Y)$, with the composition as its operation.
\end{defi}

To simplify notation, we will often blur the difference between elements of the asymptotically rigid handlebody groups and their representatives.

\begin{rem} An asymptotically rigid homeomorphism is determined by its action on any defining handlebody, hence one may be tempted to consider asymptotically rigid homeomorphisms up to \textit{proper isotopies}, i.e. isotopies supported in suited handlebodies. However, if two asymptotically rigid self-homeomorphisms of $\T_{d,r}$ are isotopic then they are also properly isotopic by a combination of \cite[Proposition 3.11]{Asymp} and \cite[Lemma 3.1]{Hensel}, so isotopy and proper isotopy can be used interchangeably. We use isotopy over proper isotopy in Definition \ref{def-Asy} to stress that $\H_{d,r}(O,Y)$ is a subgroup of the mapping class group $\Map(\T_{d,r}(O,Y))$.
\end{rem}

\subsection{Relation with mapping class groups} \label{relation}

Asymptotically rigid handlebody groups are the analogs, for handlebodies, of the \textit{asymptotic mapping class groups} $\B_{d,r}(O,Y)$ in \cite{Asymp} (when $d\geq2$), and of the \textit{surface Houghton groups} $\B_n$ in \cite{SurfaceHoughton} (when $d=1$). The restriction of elements of $\H_{d,r}(O,Y)$ to the boundary surface of the tree handlebody gives elements of $\B_{d,r}(O,Y)$ when $d\geq 2$; and elements of $\B_r$ when $d=1$ and $O$ has $g=0$, hence there are maps $$\R:\H_{d,r}(O,Y)\rightarrow\B_{d,r}(O,Y) , \; d\geq 2, $$ $$\R:\H_{d,r}(O,Y)\rightarrow\B_r, \; d=1,$$ defined by restriction to the boundary, which are well defined, as two representatives of the same element of  $\H_{d,r}(O,Y)$ define isotopic elements of the mapping class group of the surface $\partial\T_{d,r}(O,Y)$. The map $\R$ is a monomorphism because of \cite[Lemma 3.1, Section 3]{Hensel}.

However, $\R$ cannot be used to deduce the finiteness properties of asymptotically rigid handlebody groups, as the group $\H_{d,r}(O,Y)$ has infinite index as a subgroup of $\B_{d,r}(O,Y)$ or $\B_n$, which can be deduced from \cite[Corollary 5.4]{Hensel}.

\subsection{Compactly supported handlebody group}\label{CompacSup}

The \textit{compactly supported handlebody group} $\H_{d,r}^c(O,Y)$ is the subgroup of $\H_{d,r}(O,Y)$ of elements with compact support, meaning that they are the identity outside of a compact subhandlebody. Recall the handlebodies $O_k$ in the definition of $\T_{d,r}(O,Y)$. Using the fact that every inclusion $O_k\subset O_{k+1}$ induces a homomorphism $j_{O_k,O_{k+1}}:\H(O_k)\rightarrow\H(O_{k+1})$, we can equivalently define the group as a direct limit: $$\H^c_{d,r}(O,Y):=\underset{\longrightarrow} \lim \, \H(O_k).$$

\subsection{Relation with Higman-Thompson groups}\label{SubRelThom}

Analogously to \cite[Subsection 4.1]{Asymp}, there is a sequence relating asymptotically rigid handlebody groups of Cantor handlebodies with \textit{Higman-Thompson groups} (\cite{Higman}, \cite[Section 4]{Brown}), denoted by $V_{d,r}$ with $d\geq 2$ $r \geq 1$. Let $\T_{d,r}(O,Y)$ be any tree handlebody with $d\geq 2$. The action of an element of $\H_{d,r}(O,Y)$ on the space of ends of the tree induces a homeomorphism of the Cantor set, which as in \cite[Proposition 4.3]{Asymp} gives rise to a short exact sequence:
$$1\rightarrow\H_{d,r}^c(O,Y)\xrightarrow{}\H_{d,r}(O,Y)\xrightarrow{} V_{d,r}\rightarrow 1.$$

\section{Cube complex}\label{Sect2}

The key ingredient to proving Theorem \ref{Main1} and Theorem \ref{Main2} is a classical criterion due to Brown \cite{Brown}. The language here used is the one of \textit{discrete Morse theory} as written in \cite{MorseTh}. We give a brief summary, and refer the reader to \cite{MorseTh},  \cite[Appendix 1]{Asymp} for further details.

\subsection{Brown's criterion}

Let $G$ be a group acting cellularly and cellwise isometrically on a piecewise euclidean $CW$-complex $\K$ (i.e. a $CW$-complex where each closed cell is isometric to a convex polyhedron embedded in Euclidean space), equipped with a $G$-invariant \textit{Morse function}, that is, a $G$-invariant map $h:V(\K)\rightarrow \RR$  defined on the vertices $V(\K)$ of $\K$, with a unique maximum per cell, which we denote the \textit{top vertex}. The Morse function can be extended linearly to a map $h:\K\rightarrow \RR$ defined on the whole of $\K$. Given $v\in V(\K)$, a \textit{descending cell} of $v$ is a cell with $v$ as top vertex. We refer to $h(v)$ as the \textit{height} of $v$. Images of vertices of $\K$ under $h$ are denoted \textit{critical values}. The map $h$ is said to be \textit{discrete} if the set of critical values is discrete. Denote by $\K^{\leq s}$ the subcomplex of $\K$ spanned by vertices of height at most $s$.

Let $c$ be a coface of $v$, i.e. a cell with $v\in c$. The set of \textit{directions} based at $v$ and pointing into $c$ is the quotient of the set of straight line segments with $v$ as an endpoint and its other endpoint in $c$ under the following equivalence relation: two segments are equivalent if one is an initial segment of the other. The set of directions from $v$ into $c$ forms an spherical polyhedron $\Lk(v,c)$. The \textit{link} $\Lk(v,\K)$ is the result of identifying the links $\Lk(v,c)$ of every coface $c\in\K$ of $v$ along their intersections. The \textit{descending link} $\dl(v,\K)$ of a vertex $v$ is the link of $v$ in the union of all its descending cells.

We say a group $G$ is of type $F_n$ if it acts properly discontinuously, freely and cocompactly on a CW-complex with vanishing homotopy groups $\pi_0,...,\pi_{n-1}$. A group is of type $F_\infty$ if it is of type $F_n$ for all $ n> 0$. The group $G$ is said to be of type $FP_n$ if the $G$-module $\ZZ$ admits a projective resolution whose first $n$ terms are finitely generated. Being of type $F_n$ implies being of type $FP_n$ (see \cite[Section 8.2]{Geoghegan}).

The version of the criterion we introduce can be deduced from \cite[Subsection 3.1]{SurfaceHoughton} and its proof. We split it into two results as in this way, Theorem \ref{brown1} will apply to Cantor handlebodies and Theorem \ref{brown2} to star handlebodies.

\begin{theor}\label{brown1} (Brown's criterion I) Let $G$ be a group acting cellwise by isometries on a piecewise euclidean $CW$-complex $\K$. Assume $\K$ is equipped with a discrete $G$-invariant Morse function $h:\K\rightarrow \RR$. Suppose that:

\begin{enumerate}
    \item $\K$ is contractible.
    \item The quotient of $\K^{\leq s}$ by $G$ is finite for all critical values s.
    \item Every cell stabilizer is of type $F_\infty$.
    \item There exists $l\geq 1$ such that, for a sufficiently large critical value $s$ and for every vertex $v\in \K$ with $h(v)\geq s$, $\dl(v,\K)$ is $(l-1)$-connected.
\end{enumerate}

Then $G$ is of type $F_l$.
\end{theor}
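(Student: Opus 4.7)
The plan is to deduce the statement from Brown's original finiteness criterion \cite{Brown} applied to the $G$-invariant filtration $\{\K^{\leq s}\}_{s}$ of $\K$ indexed by critical values $s$. Brown's criterion says that if $G$ acts on a contractible $CW$-complex exhausted by $G$-cocompact subcomplexes with cell stabilizers of type $F_\infty$, and if the filtration is \textit{essentially $(l-1)$-connected} (each inclusion in the chain eventually induces the zero map on $\pi_k$ for $k\leq l-1$), then $G$ is of type $F_l$. Hypothesis (1) supplies contractibility of $\K$, hypothesis (3) supplies the $F_\infty$ cell stabilizers, and hypothesis (2) yields $G$-cocompactness of each $\K^{\leq s}$. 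The only remaining content to extract from hypothesis (4) is essential $(l-1)$-connectivity of the filtration.

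For this I would invoke the standard discrete Morse theoretic comparison: for critical values $s\leq s'$, the complex $\K^{\leq s'}$ is obtained from $\K^{\leq s}$ by iteratively adjoining, in ascending order of height, the closed descending stars of the vertices $v$ with $s<h(v)\leq s'$. Each such closed descending star is the cone on $\dl(v,\K)$ with apex $v$, and it meets the preceding stage exactly along $\dl(v,\K)$; so attaching it is homotopically equivalent to coning off the descending link. Once a critical value $s_0$ is chosen beyond which hypothesis (4) guarantees that every descending link is $(l-1)$-connected, each cone attachment is along an $(l-1)$-connected subcomplex, and the inclusion $\K^{\leq s}\hookrightarrow \K^{\leq s'}$ becomes $(l-1)$-connected for all $s_0\leq s\leq s'$. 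Passing to the colimit and using the contractibility of $\K$ then forces each $\K^{\leq s}$ with $s\geq s_0$ to be $(l-1)$-connected on its own, giving essential $(l-1)$-connectivity of the filtration, and Brown's criterion concludes.

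The main obstacle is the Morse theoretic bookkeeping in the middle step, where several vertices at a common critical height may share ambient cells. This is handled cleanly by using discreteness of $h$ together with $G$-invariance to refine the filtration by $G$-orbits within each fixed critical level, and by the observation that each cell of $\K$ has a unique top vertex, so the descending stars of distinct vertices at the same height have pairwise disjoint interiors and can therefore be attached one orbit at a time without disturbing the iterated pushout description. Once this is in place, the rest is a formal application of Brown's criterion, following the same template as \cite[Subsection 3.1]{SurfaceHoughton}.
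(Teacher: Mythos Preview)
Your proposal is correct and matches the paper's approach: the paper does not give its own proof of this statement but simply records that it ``can be deduced from \cite[Subsection 3.1]{SurfaceHoughton} and its proof,'' and your sketch is precisely the standard Morse-theoretic reduction to Brown's original criterion carried out there (filtration by sublevel sets, coning descending links to pass between consecutive stages, and invoking essential $(l-1)$-connectivity). Nothing further is needed.
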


Particularly, if condition $(4)$ is satisfied for any $l$, the group is of type $F_\infty$.

\begin{theor}\label{brown2}
    (Brown's criterion II) Let $G$ be a group and $\K$ be a $CW$-complex satisfying every hypothesis in Theorem \ref{brown1}. If, additionally, the following two hypotheses are satisfied:

    \begin{enumerate}
    \setcounter{enumi}{4}

        \item There exists a critical value $s$ such that for every vertex $v\in \K$ with $h(v)\geq s$, $\dl(v,\K)$ is of dimension $l$.

        \item For each critical value $s$ there exists a vertex $v$ with $h(v)\geq s$ such that $\dl(v,\K)$ is non-contractible.
    \end{enumerate}

Then $G$ is op type $F_l$ but not of type $FP_{l+1}$.
    
\end{theor}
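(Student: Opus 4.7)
The plan is to prove the two conclusions separately. The $F_l$ assertion is immediate from Theorem \ref{brown1}, which applies under the four hypotheses (1)--(4) assumed here, so no additional work is required. For the negative assertion I would appeal to the homological formulation of Brown's criterion, as worked out in \cite[Subsection 3.1]{SurfaceHoughton}: under the present setup, $G$ is of type $FP_{l+1}$ if and only if the direct system $\{\tilde H_l(\K^{\leq s})\}_s$, indexed by critical values, is \emph{essentially trivial}, meaning that for every critical $s$ there exists $t > s$ with $\tilde H_l(\K^{\leq s}) \to \tilde H_l(\K^{\leq t})$ the zero map. The goal then becomes to use (5) and (6) to violate this condition.

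The principal technical input is the standard discrete Morse lemma: for successive critical values $s < s'$, $\K^{\leq s'}$ is obtained from $\K^{\leq s}$ by attaching, at each vertex $v$ with $h(v) = s'$, a cone on the descending link $\dl(v,\K)$ glued along the inclusion $\dl(v,\K) \hookrightarrow \K^{\leq s}$ (with apex $v$). Discreteness of $h$ and the fact that descending-star interiors of distinct vertices meet only in lower strata let these attachments be performed simultaneously. Combining this with hypotheses (4) and (5), each $\dl(v,\K)$ for $h(v)$ above the relevant threshold is $(l-1)$-connected and of dimension $l$, hence by Hurewicz homotopy equivalent to a wedge of $l$-spheres. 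Consequently the relative homology is concentrated in a single degree,
\[
H_k(\K^{\leq s'}, \K^{\leq s}) \;\cong\; \bigoplus_{v:\; s < h(v) \le s'} \tilde H_{k-1}(\dl(v,\K)),
\]
vanishing for $k \neq l+1$. For $i < l$ the long exact sequence gives $\tilde H_i(\K^{\leq s}) \cong \tilde H_i(\K^{\leq s'})$, and contractibility of $\K$ (hypothesis (1)) forces these to vanish past the threshold, so essential triviality below degree $l$ is automatic.

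At $i = l$ the relevant portion of the exact sequence reads
\[
H_{l+1}(\K^{\leq s'}) \to H_{l+1}(\K^{\leq s'}, \K^{\leq s}) \xrightarrow{\partial} \tilde H_l(\K^{\leq s}) \twoheadrightarrow \tilde H_l(\K^{\leq s'}) \to 0,
\]
so essential triviality at $i = l$ amounts to $\tilde H_l(\K^{\leq s})$ being eventually zero. Hypothesis (6) produces, for arbitrarily large critical $s$, some $s' > s$ with $H_{l+1}(\K^{\leq s'}, \K^{\leq s}) \neq 0$ -- a fresh contribution from a non-contractible descending link. The task, and the main obstacle, is to show that these fresh relative classes cannot all be absorbed into the image from $H_{l+1}(\K^{\leq s'})$, i.e.\ that $\tilde H_l(\K^{\leq s})$ cannot be eventually zero; a priori the new descending-link classes could represent $(l+1)$-cycles that bound at later stages, in which case they would land in $H_{l+1}(\K^{\leq s'})$ and never reach $\tilde H_l(\K^{\leq s})$ through $\partial$. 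The resolution, carried out in detail in \cite[Subsection 3.1]{SurfaceHoughton}, combines the cocompactness of hypothesis (2) with the $G$-equivariant combinatorial structure of the descending-link contributions to produce a persistent non-zero class in $\tilde H_l(\K^{\leq s})$ for arbitrarily large $s$. This violates essential triviality and yields that $G$ is not of type $FP_{l+1}$.
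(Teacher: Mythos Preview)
The paper does not give its own proof of Theorem~\ref{brown2}; it simply states that the criterion ``can be deduced from \cite[Subsection 3.1]{SurfaceHoughton} and its proof.'' Your proposal follows exactly the same route---you sketch the Morse-theoretic cone-attachment picture and the essential-triviality formulation of Brown's criterion, then defer the technical resolution of the negative part to \cite[Subsection 3.1]{SurfaceHoughton}---so there is nothing to compare: your outline is a more expanded version of what the paper leaves as a citation.
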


In the next subsection, we will introduce a cube complex $\Xc_{d,r}(O,Y)$, where $\H_{d,r}(O,Y)$ acts cellwise by isometries. The proofs of Theorems \ref{Main1} and \ref{Main2} then boil down to checking a series of premises depending on whether we are in the Cantor handlebody case (with $d\geq 2$), or in the star handlebody case (with $d=1$):

\textbf{Cantor case (Theorem \ref{Main1}):} Condition (1) and (2) in Theorem \ref{brown1} are proved in Corollary \ref{Cond1},  and condition (3) in Corollary \ref{cond3}. Section \ref{Sect3} studies connectivity bounds for the descending links $\dl(v,\K)$. Particularly, Corollary \ref{TheorI} states that condition (4) in Theorem \ref{brown1} is satisfied for every $l\geq0$, hence checking that the action of $\H_{d,r}(O,Y)$ on $\Xc_{d,r}(O,Y)$ satisfies every hypothesis of Theorem \ref{brown1} for every $l\geq0$. This will prove Theorem \ref{Main1}.

\textbf{Star case (Theorem \ref{Main2}):} Condition (1), (2) and (3) in Theorem \ref{brown1} are proved as in the Cantor case. Corollary \ref{Cond(4)} proves Condition (4) in Theorem \ref{brown1} for $l=r-1$. Condition (5) and (6) in Theorem \ref{brown2} are proven in Lemma \ref{lem(5)} and Lemma \ref{lem(6)} respectively. This will prove Theorem \ref{Main2}.

\subsection{Stein-Farley cube complex}
The goal of this section is to introduce the complex $\Xc_{d,r}(O,Y)$, which is analogous to the cube complex introduced in  \cite[Section 5]{Asymp}, and also closely resembles that introduced by Genevois-Lonjou-Urech in \cite{BrThom}. The results in this section can be deduced from \cite[Sections 5, 6] {Asymp} bearing in mind the difference in context, as here we deal with boundary connected sums instead of connected sums. We omit $d, r, O$, and $Y$ from the notation for simplicity, and write $\T=\T_{d,r}(O,Y)$, $\Xc=\Xc_{d,r}(O,Y)$ and $\H=\H_{d,r}(O,Y)$.

\begin{rem} As in \cite{Asymp}, a key element in the construction of the complex $\Xc$ is the fact that handlebodies satisfy the inclusion, intersection, and cancellation properties, as shown in Appendix \ref{Ap1}. The places where these properties are used will be highlighted in the text.
\end{rem}

Consider all ordered pairs $(M,f)$ where $M$ is a suited handlebody of $\T$ and $f\in\H$. We deem two such pairs $(M_1,f_1)$ and $(M_2,f_2)$ to be equivalent, and write $(M_1,f_1)\sim (M_2,f_2)$, if and only if there are representing homeomorphisms (abusing notation) $f_1$ and $f_2$ such that $f_2^{-1}\circ f_1$ maps $M_1$ onto $M_2$ and is rigid outside of $M_1$. We denote by $[(M,f)]$ the equivalence class of the pair $(M,f)$ with respect to this relation, and write $\P$ for the set of equivalence classes. Observe that $\H$ acts on $\P$ by left multiplication, namely $g\cdot[(N,f)]=[(N,g\circ f)]$.

Consider a pair $(M,f)$. Since $M$ is a suited handlebody, it is the union of finitely many pieces and $O_0$. We define the\textit{ height} $h((M,f))$ of the pair $(M,f)$ as the number of pieces in $M$. Note that if $(M_1,f_1)\sim (M_2,f_2)$ then $h((M_1,f_1))=h((M_2,f_2))$, and thus $h$ descends to a well-defined Morse function (abusing notation) $h:\P\rightarrow\NN$, setting the height $h([(M,f)])$ to be the height of any representative.

We introduce a relation $\preceq$ on the elements of $\P$ by declaring $x_1\preceq x_2$ if and only if $x_1=[M_1,f]$ and $x_2=[M_2,f]$ for suited handlebodies $M_1\subset M_2$ such that $\overline{M_2\setminus M_1}$ is a disjoint union of pieces.

Define \textit{a closed interval} $[x,y]$ to be the set of elements $z\in\P$ such that $x\preceq z\preceq y$. The relation $\preceq$ can be used to construct a cube complex with $\P$ as its 0-skeleton and every $[x,y]$ as a $d$-cube with $d=h(y)-h(x)$ (See \cite[Proposition 5.11]{Asymp} for details). We will refer to the cube complex $\Xc$ as the \textit{Stein-Farley cube complex} associated to the tree handlebody $\T$. 

Observe that $\H$ acts on $\Xc$ respecting the cubical structure, so the action on $\Xc$ is cellular. In addition, the (discrete) function $h$ can be linearly extended over cubes to a Morse function over $\Xc$. A direct translation of the reasoning in \cite[Proposition 5.7]{Asymp} and \cite[Lemma 6.2]{Asymp} results in the following:

\begin{theor}
    The complex $\Xc$ is contractible, and the action of $\H$ on $\Xc^{\leq k}$ is cocompact for all $1\leq k < \infty$. 
\end{theor}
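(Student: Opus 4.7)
The plan is to translate the arguments of \cite[Proposition 5.7]{Asymp} and \cite[Lemma 6.2]{Asymp} to the handlebody setting, relying crucially on the inclusion, intersection, and cancellation properties of suited handlebodies recorded in Appendix \ref{Ap1}. The two assertions can be handled independently.

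For contractibility, I would work directly with the poset $(\P, \preceq)$. First, $(\P, \preceq)$ is upward directed: the inclusion property for suited handlebodies ensures that any finite family of suited handlebodies in $\T$ is contained in a single larger suited handlebody, so any finite set of classes in $\P$ admits a common upper bound. Second, each interval $[x, y] \subset \P$ is canonically isomorphic to the Boolean lattice of subsets of the pieces added in passing from $x$ to $y$, and this Boolean structure is exactly what realizes $[x, y]$ as a cube in $\Xc$. Third, meets of elements inside an interval exist via handlebody intersection, which guarantees that vertex links in $\Xc$ are flag simplicial complexes. With these three structural facts one can either verify Gromov's link condition, concluding that $\Xc$ is CAT(0) and hence contractible, or argue directly: given any finite subcomplex $K \subset \Xc$, pick a common upper bound $v$ of the vertices of $K$ and use the cubical structure of the intervals $[x, v]$ to retract $K$ onto $v$.

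For cocompactness, a cell of $\Xc^{\leq k}$ is determined by a triple $(M_1 \subset M_2, f)$ with $M_1, M_2$ suited, $\overline{M_2 \setminus M_1}$ a disjoint union of pieces, and $h(M_2) \leq k$. The $\H$-action $g \cdot [(N, f)] = [(N, g f)]$ allows us to modify the $f$-coordinate freely, and a transitivity argument based on the preferred rigid structure together with the handlebody cancellation property shows that $\H$ acts transitively on such triples of a fixed combinatorial type. Consequently, $\H$-orbits of cells in $\Xc^{\leq k}$ are indexed by combinatorial types of nested pairs $M_1 \subset M_2 \subset \T$ with $h(M_2) \leq k$. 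There are only finitely many such types, so the quotient is finite.

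The main obstacle is the transitivity step in the cocompactness argument, together with verifying that intersections of suited handlebodies are again suited for the meet step of the contractibility proof. Both rely on the handlebody-specific structural results in Appendix \ref{Ap1}, and the relevant content amounts to a change-of-coordinates principle for suited handlebodies up to asymptotically rigid homeomorphism, adapted from its surface counterpart in \cite{Asymp}.
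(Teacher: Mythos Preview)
Your plan is in line with the paper, which itself gives no argument beyond citing \cite[Proposition~5.7, Lemma~6.2]{Asymp}. Two imprecisions are worth correcting.

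First, the ``inclusion property'' of Appendix~\ref{Ap1} asserts injectivity of the map $\H(N)\to\H(M)$, not that finite families of suited handlebodies sit inside a common one; the latter fact is trivial. More substantively, the relation $\preceq$ as defined here is not transitive (adding pieces in two successive stages can produce adjacent pieces, so the total complement is no longer a \emph{disjoint} union of pieces), and speaking of $(\P,\preceq)$ being upward directed is therefore ill-posed. The argument in \cite{Asymp} introduces an auxiliary honest partial order $\leq$ on $\P$ (suited containment with the same $f$, no constraint on the complement), shows that $(\P,\leq)$ is directed --- which genuinely requires handling the $f$-coordinate, not just the underlying handlebodies --- and then retracts the resulting contractible order complex onto $\Xc$. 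Your CAT(0) alternative is also viable and is closer to \cite{BrThom}; there it is the intersection property, not the inclusion property, that supplies the meets needed for the link condition.

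Second, for cocompactness the transitivity of $\H$ on cells of a fixed combinatorial type rests on the classification of spotted handlebodies by genus and number of spots (the ingredient used in the proof of Lemma~\ref{cancellation}), rather than on the cancellation property as stated.
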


\begin{cor}\label{Cond1}
    The complex $\Xc$ and the group $\H$ satisfy conditions (1) and (2) in Theorem \ref{brown1}.
\end{cor}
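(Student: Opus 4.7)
The plan is to derive both conditions directly from the unnumbered theorem stated immediately above the corollary, with only cosmetic additional observations. Condition (1) of Theorem \ref{brown1} requires that the ambient complex be contractible, and this is literally the first clause of that theorem, so nothing further need be checked.

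For condition (2), I would first observe that the Morse function $h$ is integer-valued---it counts the pieces in a suited handlebody---so its critical values all lie in $\NN$. Fix such a value $s$. When $s \geq 1$, the preceding theorem supplies cocompactness of the $\H$-action on $\Xc^{\leq s}$. Because $\H$ acts cellularly on the cube complex $\Xc$ and the Morse function is $\H$-invariant (so that $\Xc^{\leq s}$ is $\H$-stable), cocompactness is equivalent to the quotient $\Xc^{\leq s}/\H$ having only finitely many cells, which is exactly what condition (2) requires. The only value not immediately covered is $s = 0$, and there I would verify by hand that $\Xc^{\leq 0}$ consists of the vertices $[(O_0, f)]$ with $f \in \H$ and that these form a single $\H$-orbit, since $g \cdot [(O_0, \id)] = [(O_0, g)]$ sweeps through all such vertices as $g$ runs over $\H$; so the quotient is a single point.

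I would not expect any serious obstacle. All of the substantive content is already packaged into the preceding theorem, whose proof is a translation of the arguments of \cite[Proposition 5.7]{Asymp} and \cite[Lemma 6.2]{Asymp} to the boundary-connected-sum setting; the corollary amounts to rephrasing its two conclusions in the language of Brown's criterion.
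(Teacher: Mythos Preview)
Your proposal is correct and matches the paper's approach: the corollary is stated without proof immediately after the theorem asserting contractibility and cocompactness, so deriving conditions (1) and (2) directly from that theorem is exactly what is intended. Your extra care in spelling out the passage from cocompactness to finiteness of the quotient, and in treating the height-$0$ case separately, is more detail than the paper provides but is entirely in the same spirit.
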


\subsection{Cell stabilizers} 
To check that cell stabilizers of the action of $\H$ on $\Xc$ satisfy condition (3) in Theorem \ref{brown1}, we relate them to the \textit{spot-permuting handlebody group} of the suited handlebodies. Adapting the argument in \cite[Lemma 6.3]{Asymp} we get the following result:

\begin{lem}\label{lem-index}
    The cube stabilizers of the action of $\H$ on $\Xc$ are isomorphic to a finite index subgroup of the spot-permuting handlebody group  $\H_o(M,A)$ of suited handlebodies $(M,A)$.
\end{lem}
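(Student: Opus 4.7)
Let $C$ be a cube in $\Xc$. Its vertices have the form $[M', f]$ with $M_0 \subseteq M' \subseteq M$, where $M_0 = \min(C)$ and $M = \max(C)$ under $\preceq$, and $\overline{M \setminus M_0}$ decomposes as a disjoint union of pieces $Z_1, \dots, Z_k$. Since acting by $f^{-1}$ on $\Xc$ is a cellular isometry, we may assume $f = \id$. Because the $\H$--action on $\P$ preserves $\preceq$, any element of the cube stabilizer $\textnormal{Stab}(C)$ must fix the unique minimum $[M_0, \id]$ and the unique maximum $[M, \id]$ of $C$. Unwinding Definition \ref{def-Asy} and the equivalence relation $\sim$, this gives
$$\textnormal{Stab}(C) = \bigl\{ g \in \H : g(M_0) = M_0,\ g(M) = M,\ g \text{ rigid outside } M_0 \bigr\}.$$

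\textbf{Restriction to the minimum.} I plan to exhibit the isomorphism with the claimed subgroup through the restriction map $\rho : \textnormal{Stab}(C) \to \H_o(M_0, A_0)$, $\rho(g) = g|_{M_0}$. Well-definedness follows because rigidity of $g$ on each piece attached to a spot $D \in A_0$ forces $g|_{M_0}$ to respect the parametrizations of the spots of $M_0$. For injectivity, if $\rho(g) = \id$ then $g$ acts as the identity on every spot of $M_0$; combined with the rigidity of $g$ on each piece $Z$ adjacent to $M_0$, this forces $g|_Z = \id$, since a rigid map between pieces which restricts to the identity on the shared gluing disc must equal the identity by construction of the preferred rigid structure. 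Propagating inductively along the tree of pieces outside $M_0$ then shows $g = \id$ in $\H$.

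\textbf{Image and finite index.} For the image of $\rho$, any $\phi \in \H_o(M_0, A_0)$ can be extended to an asymptotically rigid self-homeomorphism of $\T$ by rigidity outside $M_0$: if $\phi$ induces the permutation $\sigma \in \Sym(A_0)$ on the spots, send the piece glued at a spot $D_i$ rigidly onto the piece glued at $D_{\sigma(i)}$, and continue outwards inductively. The resulting element automatically stabilizes $[M_0, \id]$, and it stabilizes $[M, \id]$ precisely when $\sigma$ preserves the subset $\Delta \subset A_0$ of spots at which $Z_1, \dots, Z_k$ are attached. Hence $\Image(\rho)$ is the preimage, under the natural spot-permutation homomorphism $\H_o(M_0, A_0) \to \Sym(A_0)$, of the subgroup $\{\tau \in \Sym(A_0) : \tau(\Delta) = \Delta\}$. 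Since $A_0$ is finite, this preimage has finite index in $\H_o(M_0, A_0)$, as required.

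\textbf{Main obstacle.} The most delicate step will be the injectivity argument, specifically verifying that a rigid homeomorphism between pieces which fixes the shared gluing disc must be the identity. This relies on the compatibility between the preferred rigid structure $\{i_Z\}$ and the fixed parametrizations $\mu_0, \dots, \mu_d, \nu_1, \dots, \nu_r$ used to construct $\T$ in Subsection \ref{Subsect1.2}, and then has to be propagated carefully through the whole tree of pieces beyond $M_0$.
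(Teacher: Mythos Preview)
Your argument is correct and is precisely the kind of reasoning the paper has in mind when it writes ``adapting the argument in \cite[Lemma 6.3]{Asymp}'' without giving further details. Identifying $\textnormal{Stab}(C)$ with a subgroup of the vertex stabilizer of the \emph{bottom} vertex $[M_0,\id]\cong\H_o(M_0,A_0)$ and then reading off the finite-index condition as the spot-permutation constraint $\sigma(\Delta)=\Delta$ is exactly the clean way to do this; the only point worth adding explicitly is the (easy) converse that fixing both $[M_0,\id]$ and $[M,\id]$ already forces $g$ to stabilise the whole interval $[x,y]$, which follows since the $\H$--action preserves $\preceq$. Your ``main obstacle'' is real but routine: the compatibility you need is precisely the clause ``agrees with the $\mu_i$'s used for the boundary connected sums'' in the definition of the preferred rigid structure, and once that is in hand both the well-definedness of $\rho$ and the inductive propagation of $g|_Z=\id$ along the tree go through exactly as you sketch.
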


Lemma \ref{lem-index} is used in conjunction with the following result (see \cite[Section 7]{Geoghegan}).

\begin{lem}\label{Lem-sec2}

Let $G$, $K$ and $Q$ be groups.
\begin{enumerate}
    \item Let $1 \rightarrow K \rightarrow G \rightarrow Q \rightarrow 1$ be a short sequence. If $K$ and $Q$ (resp. $K$ and $G$) are of type $F_n$, then so is $G$ (resp. $Q$).
    \item If $K \leq G$ has finite-index, then $G$ is of type $F_n$ if and only if $K$ is.

\end{enumerate}
    
\end{lem}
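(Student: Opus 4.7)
\emph{Plan.} The plan is to verify each implication by constructing, in each case, a suitable classifying space (equivalently, a free cocompact cellular action on a highly connected CW-complex) whose $n$-skeleton is finite, as in \cite[Chapter 7]{Geoghegan}.

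For part (1), assume first that $K$ and $Q$ are of type $F_n$. I would pick a $K(Q,1)$-space $B_Q$ and a $K(K,1)$-space $B_K$, each with finite $n$-skeleton, and build a $K(G,1)$ as the total space of a bundle with fiber $B_K$ and base $B_Q$ whose structure cocycle realises the outer action of $Q$ on $K$ coming from the extension; concretely this can be done via the Borel-type construction $EQ\times_Q B_K$ after lifting the outer $Q$-action on $K$ to a cellular action on $B_K$. Since the $n$-cells of the total space come from pairs of cells in $B_K$ and $B_Q$ of dimensions summing to at most $n$, the total space has finite $n$-skeleton, so $G$ is of type $F_n$. Conversely, given $K$ and $G$ of type $F_n$, I would start with a $K(G,1)$-space $Y$ with finite $n$-skeleton, consider its universal cover $\widetilde{Y}=EG$, which is contractible with a free $G$-action, and form the quotient $EG/K$. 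This is a $K(K,1)$, and $Q=G/K$ acts freely on it. Using that $K$ is of type $F_n$, I would replace $EG/K$ by a $Q$-equivariantly equivalent $(n-1)$-connected $Q$-CW-complex on which $Q$ acts freely and cocompactly, whose quotient is a $K(Q,1)$ with finite $n$-skeleton.

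For part (2), if $G$ is of type $F_n$ take a $K(G,1)$-space $Y$ with finite $n$-skeleton; the $[G:K]$-sheeted covering space $Y_K\to Y$ associated with $K$ is a $K(K,1)$, and because each cell of $Y$ lifts to $[G:K]$ cells in $Y_K$, its $n$-skeleton is still finite, so $K$ is of type $F_n$. Conversely, assume $K$ is of type $F_n$ and $[G:K]=m<\infty$. I would take an $(n-1)$-connected CW-complex $X$ with free cocompact cellular $K$-action (e.g.\ the universal cover of a $K(K,1)$ with finite $n$-skeleton) and form the induced $G$-space $(G\times X)/K$, a disjoint union of $m$ copies of $X$ on which $G$ acts freely with compact quotient. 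This is disconnected, so I would equivariantly attach finitely many $G$-orbits of $1$-cells to connect the components and then finitely many $G$-orbits of $i$-cells ($2\le i\le n$) to kill the homotopy groups in each dimension up to $n-1$; since at each stage only finitely many $G$-orbits are needed, cocompactness is preserved and we obtain an $(n-1)$-connected complex with a free cocompact $G$-action, witnessing $G\in F_n$.

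The part I expect to be the main obstacle is the converse in (1), where the finiteness data on $K$ and $G$ must be transferred to the quotient $Q$: building a finite-type classifying space for $Q$ out of one for $G$ requires a careful equivariant reduction of the induced $Q$-action on $EG/K$, and the cells one wants to control naturally live at the wrong level of the extension. The other three implications are more direct covering-space/bundle arguments. As all four statements are classical and given full treatment in \cite[Chapter 7]{Geoghegan}, in the final text I would simply invoke the reference rather than reproduce the constructions.
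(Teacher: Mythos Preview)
Your proposal is correct and, in its final conclusion, matches the paper exactly: the paper gives no proof of this lemma at all and simply refers the reader to \cite[Section 7]{Geoghegan}. Your sketched constructions are reasonable, but since you yourself conclude that you would only invoke the reference in the final text, there is nothing to add.
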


Since handlebody groups $\H(M)$ are of type $F_\infty$ (by \cite[Theorem 1.1 a, Theorem 6.1]{McCullough}), part (1) of Lemma \ref{Lem-sec2}, applied to both sequences in Lemma \ref{lem-sec} implies that spotted handlebody groups $\H(M,A)$ are also $F_\infty$. Furthermore, since $\H(M,A)$ is a finite index subgroup of $\H_o(M,A)$, Lemma \ref{Lem-sec2} part (2) implies that $\H_o(M,A)$ is $F_\infty$, and that so are cell stabilizers. Hence we deduce the following corollary.

\begin{cor}\label{cond3}
    The action of $\H$ on $\Xc$ satisfies condition (3) in Theorem \ref{brown1}; that is, every cell stabilizer is of type $F_\infty$.
\end{cor}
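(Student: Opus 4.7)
The plan is to assemble Lemma \ref{lem-index}, Lemma \ref{lem-sec}, and Lemma \ref{Lem-sec2}, starting from the input that ordinary handlebody groups $\H(M)$ are of type $F_\infty$ by \cite[Theorem 1.1a, Theorem 6.1]{McCullough}. By Lemma \ref{lem-index}, every cube stabilizer of the action of $\H$ on $\Xc$ is isomorphic to a finite index subgroup of $\H_o(M,A)$ for some suited handlebody $(M,A)$, so by Lemma \ref{Lem-sec2}(2) it suffices to show that $\H_o(M,A)$ is of type $F_\infty$ for every suited handlebody. Since $\H(M,A)$ is a finite index subgroup of $\H_o(M,A)$, a second application of Lemma \ref{Lem-sec2}(2) reduces the problem to proving that $\H(M,A)$ is of type $F_\infty$.

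To establish this, I would induct on the number of spots $|A|$. The base case $|A|=0$ is the input from McCullough. For the inductive step, choose a spot $D\subset A$ and a point $p\in\int(D)$, and set $A'=A\setminus\{D\}$. Lemma \ref{lem-sec} gives the short exact sequences
\[
1\to\ZZ\xrightarrow{\twist}\H(M,A)\xrightarrow{\capp}\H(M,A',p)\to 1,
\]
\[
1\to\pi_1(S_{(M,A')},p)\xrightarrow{\push}\H(M,A',p)\xrightarrow{\forget}\H(M,A')\to 1.
\]
The group $\ZZ$ is of type $F_\infty$, and $\pi_1(S_{(M,A')},p)$ is the fundamental group of a compact surface with non-empty boundary, hence a finitely generated free group, and therefore of type $F_\infty$. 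By induction $\H(M,A')$ is of type $F_\infty$; applying Lemma \ref{Lem-sec2}(1) to the second sequence shows $\H(M,A',p)$ is of type $F_\infty$, and then applying it to the first sequence shows $\H(M,A)$ is of type $F_\infty$, closing the induction.

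Combining these steps yields that cube stabilizers are of type $F_\infty$, which is exactly condition (3) of Theorem \ref{brown1}. The proof is essentially a bookkeeping exercise assembling the pieces already in place; the only point requiring any care is identifying the kernel $\pi_1(S_{(M,A')},p)$ in the Birman sequence as a finitely generated free group, which is immediate from $S_{(M,A')}$ being a compact surface with non-empty boundary (as $\partial M$ is connected and $A'$ consists of finitely many disjoint discs on it).
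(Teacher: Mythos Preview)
Your proof is correct and follows essentially the same route as the paper: reduce via Lemma \ref{lem-index} and finite index to showing $\H(M,A)$ is of type $F_\infty$, then climb from the unspotted case $\H(M)$ using the two exact sequences of Lemma \ref{lem-sec} together with Lemma \ref{Lem-sec2}(1). One small correction: in the inductive step with $|A|=1$ you have $A'=\emptyset$, so $S_{(M,A')}=\partial M$ is a closed surface and $\pi_1(S_{(M,A')},p)$ is a surface group rather than a free group---but closed surface groups are equally of type $F_\infty$ (the surface is a finite $K(\pi,1)$), so the argument goes through unchanged.
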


\section{Connectivity of descending links} \label{Sect3}
This section aims to analyze the connectivity properties of descending links, and in particular to check condition (4) in Theorem \ref{brown1} and condition (5) in Theorem \ref{brown2}. As is the case in \cite{Asymp}, we will proceed by first reinterpreting descending links by a series of simplicial complexes built from topological objects on the underlying manifolds. We remark that, while the arguments used closely resemble those in \cite{Asymp}, the technicalities are rather different, coming from the fact that here we are dealing with boundary connected sums instead of connected sums. 

The simplicial complexes we will need are schematized in the following diagram, which we include for the convenience of the reader. They will be introduced, and their connectivity analyzed, in the following subsections. 

\begin{figure}[ht]
\begin{tikzcd}
 \D_c(M_g,A)& \arrow[l, ""]\DTD(M_g,A,I)\arrow[dl,hook, ""]&\TCD(M_g,A,I)\arrow[l,hook, "(D)" above]\arrow[r, "(B)(C)(D)"above] &\CD(M_g,A)\arrow[d, "(E)" left]\\
 \DTD^{+}(M_g,A,I)\arrow[u, "(A)(B)(C)"]&\HTD(M_g,A,B)\arrow[dd, ""]\arrow[r,hook, "(D)"]&\HTD^+(M_g,A,B)\arrow[dd,"(B)" left]&\Hc(M_g,A)\\
 &&\arrow[r,dashed, ""]&\TH(M_g,A,B)\arrow[u, "(B)(D)"]\\
 \dl(v,\Xc)\arrow[r, "(E)"]&\Pc(M_g,A,B)&\OD(M_g,A,B)&
\end{tikzcd}
\setcounter{figure}{0}
\refstepcounter{figure} 
\caption*{\sc Diagram \thefigure} 
\label{diagr1}
\end{figure}

Throughout this section, we define the various complexes involved and prove their corresponding connectivity results. The labels (A), (B), (C), (D), (E), indicate the main results used to relate the connectivities of the different complexes involved, we will introduce them in the following subsection. 

\subsection{Connectivity tools} Given a topological space $X$, we say $X$ is \textit{(-$1$)-connected} if $X$ is nonempty, and \textit{$0$-connected} if $X$ is path-connected. We say $X$ is $n$-connected for $n\geq1$ if its $n$-th homotopy group is trivial. Following \cite[Section 8]{Quill}, a simplicial complex is \textit{weakly Cohen-Macaulay} (or wCM) of dimension $n$ if it is $(n-1)$-connected, and the link of every $d$-simplex is $(n-d-2)$-connected.

\textbf{(A) Barycentric subcomplexes:} Given a simplicial complex $X$, consider its barycentric subdivision $X'$. The complex $X_m$ is the subcomplex of $X'$ spanned by the vertices corresponding to simplices of dimension at least $m-1$.

\begin{theor}\label{teo-m}\cite[Lemma 3.8]{Disk} If $X$ is wCM of dimension $n$ then $X_m$ is $(n-m)$-connected.
\end{theor}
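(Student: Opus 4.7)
The plan is to proceed by induction on $m$. For the base case $m=1$, the complex $X_1$ is spanned by vertices corresponding to simplices of dimension $\geq 0$, hence equals the full barycentric subdivision $X'$, which is homeomorphic to $X$. Since $X$ is wCM of dimension $n$, it is $(n-1)$-connected, matching the target bound $n-m = n-1$.

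For the inductive step, assume $X_{m-1}$ is $(n-m+1)$-connected. Observe that $X_{m-1}$ is built from $X_m$ by adjoining, for each $(m-2)$-simplex $\sigma$ of $X$, the closed star of the barycenter $v_\sigma$ inside $X_{m-1}$; this closed star is the cone on $\Lk(v_\sigma, X_{m-1})$. Distinct stars intersect only inside $X_m$, so up to homotopy $X_{m-1}$ is the pushout of the inclusions $\Lk(v_\sigma, X_{m-1}) \hookrightarrow \Star(v_\sigma, X_{m-1})$ over $X_m$.

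I would next compute the relevant link. In the full subdivision $X'$ one has $\Lk(v_\sigma, X') \cong (\partial\sigma)' * \Lk(\sigma, X)'$, the join of the (subdivided) boundary of $\sigma$ and the (subdivided) link of $\sigma$ in $X$. Since $\dim \sigma = m-2$, every proper face of $\sigma$ has dimension at most $m-3$ and so its barycenter is excluded from $X_{m-1}$. Hence the ``downward'' join factor disappears and $\Lk(v_\sigma, X_{m-1}) \cong \Lk(\sigma, X)'$. By the wCM hypothesis applied to the $(m-2)$-simplex $\sigma$, this link is $(n - (m-2) - 2)$-connected, i.e., $(n-m)$-connected.

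Finally, I would close the induction with the standard cone-attaching argument: attaching cones along $(n-m)$-connected subspaces makes the pair $(X_{m-1}, X_m)$ be $(n-m+1)$-connected, and combining this with the inductive hypothesis through the long exact sequence of the pair gives $\pi_i(X_m) = 0$ for all $i \leq n-m$. The main subtlety I expect is the link calculation: one must carefully verify that restricting to simplices of dimension $\geq m-2$ really does kill the ``downward'' factor of $\Lk(v_\sigma, X')$ and leave the ``upward'' factor intact, and that the various cones can be attached simultaneously without interfering. Once this is confirmed, the homotopy-theoretic bookkeeping is routine.
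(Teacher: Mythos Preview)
The paper does not supply its own proof of this statement; it simply records the result and cites \cite[Lemma~3.8]{Disk} (Hatcher--Wahl). Your inductive argument is correct and is in fact the standard proof one finds there: the base case $X_1=X'$ is immediate, and for the step you correctly identify that $X_{m-1}$ is obtained from $X_m$ by coning off the barycenters $v_\sigma$ of the $(m-2)$-simplices, that these cones have pairwise disjoint open stars meeting $X_m$ exactly along $\Lk(v_\sigma,X_{m-1})\cong \Lk(\sigma,X)'$, and that the wCM hypothesis makes each such link $(n-m)$-connected.

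The only place worth one more sentence is the passage from ``links are $(n-m)$-connected'' to ``the pair $(X_{m-1},X_m)$ is $(n-m+1)$-connected''. Rather than invoking an abstract cone-attaching lemma, the cleanest way to finish is the direct pushing argument: given $f:S^j\to X_m$ with $j\le n-m$, fill it to $F:D^{j+1}\to X_{m-1}$ using the inductive hypothesis, make $F$ simplicial, and then for each vertex $v$ of the triangulation with $F(v)=v_\sigma$ replace $F$ on $\Star(v)$ by a filling of $F|_{\Lk(v)}$ inside $\Lk(\sigma,X)'\subset X_m$, which exists because that link is $(n-m)$-connected. Since any simplex of $X_{m-1}$ contains at most one such $v_\sigma$, vertices of the domain mapping to distinct bad barycenters are never adjacent, so these replacements can be carried out without interference (after handling the easy case of adjacent vertices mapping to the \emph{same} $v_\sigma$ by noting their common star still has link in $\Lk(\sigma,X)'$). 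This is exactly the ``routine bookkeeping'' you anticipated, and with it your proof is complete.
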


\textbf{(B) Quillen's Fiber Theorem:} The following theorem is stated in \cite[Proposition A.17]{Asymp}, and is a specification in terms of simplicial complexes of a more general result due to Quillen \cite[Proposition 7.6]{Quill}.

\begin{theor}\label{fiber} Let $X$ and $Y$ be simplicial complexes, $p:Y\rightarrow X$ a simplicial map and assume that the preimage of each closed simplex is $n$-connected. Then $X$ is $n$-connected if and only if $Y$ is $n$-connected.
    
\end{theor}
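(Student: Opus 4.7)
The plan is to run a skeletal induction on $X$, combined with a gluing lemma for $n$-equivalences across pushouts. Filter $X$ by its skeleta $X^{(k)}$ and set $Y^{(k)} := p^{-1}(X^{(k)})$; since $n$-connectivity of a CW-complex is determined by its $(n+1)$-skeleton, it suffices to analyze the restricted simplicial map $p \colon Y^{(n+1)} \to X^{(n+1)}$.

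The core of the argument is to strengthen the statement: I would prove by induction that for every finite subcomplex $Z \subseteq X$ which is a union of closed simplices, the restriction $p \colon p^{-1}(Z) \to Z$ is an $n$-equivalence. The base case, where $Z$ is a single closed simplex, is immediate: $Z$ is contractible and $p^{-1}(Z)$ is $n$-connected by hypothesis. For the inductive step, choose a top-dimensional simplex $\sigma$ of $Z$ and write $Z = Z' \cup \overline{\sigma}$ with $Z' \cap \overline{\sigma} = \partial \sigma$. By the inductive hypothesis, both $p \colon p^{-1}(Z') \to Z'$ and $p \colon p^{-1}(\partial \sigma) \to \partial \sigma$ are $n$-equivalences (the first by induction on the number of simplices, the second by induction on dimension), and $p \colon p^{-1}(\overline{\sigma}) \to \overline{\sigma}$ is an $n$-equivalence by the base case. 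Since on both sides the relevant square is a cofibration pushout, the gluing lemma for $n$-equivalences (Seifert–van Kampen at $\pi_1$ and the Mayer–Vietoris sequence of a homotopy pushout for higher $\pi_i$) promotes this to an $n$-equivalence $p \colon p^{-1}(Z) \to Z$.

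Applying the strengthened statement to $Z = X^{(n+1)}$ shows that $p$ itself is an $n$-equivalence, which gives the biconditional directly. A compactness reduction handles the general case where $X$ is not assumed finite: any based map $S^i \to X$ with $i \leq n$ lands in a finite subcomplex, and the same for nullhomotopies, so the comparison of homotopy groups is detected on finite subcomplexes.

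The main obstacle I foresee lies in the low-degree gluing. For $n \geq 2$, the homotopy Mayer–Vietoris sequence of a pushout is straightforward to invoke, but the cases $n = -1, 0, 1$ require extra care with basepoints, path-components, and amalgamated-product descriptions of $\pi_1$. One uses crucially that the hypothesis gives $(-1)$- and $0$-connectedness of each $p^{-1}(\sigma)$, so basepoints lift compatibly and Seifert–van Kampen can be applied cleanly at each inductive step. Once these low cases are in hand, the higher ones follow from the same inductive scheme without additional input.
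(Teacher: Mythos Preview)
The paper does not prove this result; it simply records it as a simplicial specialization of Quillen's fiber theorem, citing \cite[Proposition A.17]{Asymp} and \cite[Proposition 7.6]{Quill}. So there is no in-paper argument to compare against, and your sketch already goes further than the paper does.

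Your skeletal-induction-plus-gluing strategy is sound and is one of the standard routes to statements of this type. The one point that needs tightening is the justification of the gluing lemma for $n$-equivalences. There is no Mayer--Vietoris long exact sequence for \emph{homotopy} groups of a homotopy pushout in general --- homotopy excision fails without Blakers--Massey-type connectivity hypotheses --- so the parenthetical ``Mayer--Vietoris sequence of a homotopy pushout for higher $\pi_i$'' does not do the work you assign it. The gluing lemma you need is nonetheless true; the cleanest proof in this setting is via relative CW approximation: an $n$-connected map between CW complexes is, up to homotopy, the inclusion of a subcomplex into a complex obtained by attaching cells of dimension $>n$, and cell attachments of a given dimension are stable under cobase change along cofibrations, which yields the cube statement after a short factorization argument. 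With that lemma secured, your induction over finite subcomplexes and the compactness reduction go through exactly as you describe.
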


We will also need one more version of Quillen's Fiber Theorem, which can be deduced from \cite[Proposition 1.6]{Quill} and \cite[Lemma 2.6]{Tethers}.

\begin{theor}\label{fiber2}Let $p:Y\rightarrow X$ be a simplicial map. Define $p_m:Y_m\rightarrow X_m$ to be the maps induced by $p$. Suppose $p_m^{-1} (v)$ is contractible for each vertex $v\in X_m$. Then $p_m$ is a homotopy equivalence.
    
\end{theor}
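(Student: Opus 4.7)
The plan is to invoke Quillen's Theorem~A in its poset formulation (\cite[Proposition 1.6]{Quill}), via the standard identification of $X_m$ and $Y_m$ with order complexes. Writing $P_X := \{\sigma \in X : \dim \sigma \geq m-1\}$ ordered by face inclusion, the complex $X_m$ is precisely the nerve $N(P_X)$, and similarly $Y_m = N(P_Y)$. For $p_m$ to be well-defined as a simplicial map $Y_m \to X_m$ one needs $p$ to preserve the dimension bound $\geq m-1$ on the simplices in question, in which case $p$ induces a poset map $\bar p \colon P_Y \to P_X$ whose nerve is $p_m$. Under this identification, the subcomplex $p_m^{-1}(v) \subseteq Y_m$ corresponding to a vertex $v \in X_m$ is exactly the nerve of the fiber subposet $\bar p^{-1}(v) = \{\tau \in P_Y : p(\tau) = v\}$, so the hypothesis reads: $N\bigl(\bar p^{-1}(v)\bigr)$ is contractible for every $v \in P_X$.

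Quillen's Theorem~A concludes that $\bar p$ induces a homotopy equivalence on nerves once we know that $N\bigl(\bar p^{-1}(P_X^{\leq v})\bigr)$ is contractible for every $v \in P_X$ (where $P_X^{\leq v}$ is the down-set below $v$). The bulk of the work is thus to bootstrap from the hypothesis (contractibility of fibers over single elements) to this down-set version. I would proceed by induction on the height of $v$ in $P_X^{\leq v}$: the base case of a minimal $v$ reduces to the hypothesis itself. For the inductive step one writes $P_X^{\leq v} = \{v\} \cup \bigcup_{w \lessdot v} P_X^{\leq w}$, applies $\bar p^{-1}$ and then $N(-)$, and exploits a Mayer--Vietoris / homotopy pushout argument, together with the fact that $N(P_X^{\leq v})$ is itself a cone (hence contractible), to combine the inductive assumption with the contractibility of $N(\bar p^{-1}(v))$.

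I expect the pushout step to be the main obstacle: one must ensure that the relevant intersections of preimages have the correct homotopy type, which requires a certain cofibration-like behaviour of the induced simplicial inclusions. This is essentially the content of \cite[Lemma 2.6]{Tethers}, which I would invoke to complete the induction. Once the down-set preimages are shown to be contractible, Quillen's Theorem~A delivers the desired conclusion that $p_m$ is a homotopy equivalence.
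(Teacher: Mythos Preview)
Your proposal is correct and takes essentially the same approach as the paper: the paper does not give a proof but merely asserts that the statement ``can be deduced from \cite[Proposition 1.6]{Quill} and \cite[Lemma 2.6]{Tethers},'' and your plan combines precisely these two ingredients in the expected way---identifying $X_m$ and $Y_m$ as order complexes of the face posets $P_X$ and $P_Y$, then feeding the fiber-contractibility hypothesis through \cite[Lemma 2.6]{Tethers} to obtain the down-set condition required by Quillen's Theorem~A. Your write-up is in fact more detailed than what the paper provides.
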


\textbf{(C) Retraction into a subcomplex:}  The theorem we now introduce is a slight variation of \cite[Lemma 2.9]{Tethers}. Let $Y$ be a subcomplex of a simplicial complex $X$, with a function $c:V(X)\rightarrow \ZZ_{\geq0}$ taking positive values outside of $Y$, and value 0 on $Y$. Define the complexity of a simplex $\sigma\in X$ as $c(\sigma):=\underset{v\in\sigma}{\sum}c(v)$.

\begin{theor}\label{teoflow}
    Under the conditions above, suppose that to each vertex $v\in X\setminus Y$ we associate a vertex $\Delta v\in\Lk(v,X)$, and to each simplex $\sigma\in X$ with $\sigma\notin Y$ we associate one of the vertices $v_\sigma\in \sigma$ so that:
    \begin{enumerate}
        \item The join $\sigma*\Delta v_\sigma$ is a simplex of $X$.
        \item Let $\Delta\sigma=\Lk(v_\sigma,\sigma)*\Delta v_\sigma$ and $\Delta^k \sigma=\underbrace{\Delta \Delta...\Delta}_\text{k times}\sigma$ .Then if $ \sigma \notin Y$ there exists $k>0$ such that $c(\Delta^k \sigma)<c(\sigma)$.
        \item If $\tau$ is a face of $\sigma$ which contains $v_\sigma$ then $v_\tau=v_\sigma$.
    \end{enumerate}
Then $Y$ is a deformation retraction of $X$.
\end{theor}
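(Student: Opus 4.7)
The plan is to build the deformation retraction of $X$ onto $Y$ by iterating a local ``push'' move, in the spirit of the flow arguments of Hatcher--Vogtmann that underlie \cite[Lemma 2.9]{Tethers}. The basic building block is the following: for each simplex $\sigma \notin Y$, condition (1) guarantees that $\sigma \ast \Delta v_\sigma$ is a simplex of $X$, and inside this simplex there is a canonical straight-line homotopy $H_\sigma$ that slides $v_\sigma$ along the edge to $\Delta v_\sigma$, carrying $\sigma$ onto $\Delta \sigma = \Lk(v_\sigma,\sigma) \ast \Delta v_\sigma$ while fixing the opposite face $\Lk(v_\sigma,\sigma)$ pointwise.

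The first task is to check that these simplex-wise pushes assemble into a continuous global homotopy on $X$. This is exactly where condition (3) enters: for any face $\tau \subseteq \sigma$ with $v_\sigma \in \tau$, it forces $v_\tau = v_\sigma$, so $H_\tau$ and the restriction $H_\sigma|_\tau$ both slide the same vertex to $\Delta v_\tau = \Delta v_\sigma$ and therefore coincide. For a face $\tau \subseteq \sigma$ not containing $v_\sigma$, the homotopy $H_\sigma$ is already the identity on $\tau$, so the pushes agree trivially. Running every $H_\sigma$ simultaneously thus yields a well-defined homotopy $H^{(1)}\colon X \times [0,1] \to X$ from the identity to a map that has replaced each bad simplex $\sigma$ by $\Delta \sigma$; the subcomplex $Y$ is fixed throughout since $c$ vanishes on $Y$ and so $Y$ contains no bad simplices.

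Iterating this step produces a sequence $H^{(1)}, H^{(2)}, \dots$ such that, after the $n$-th stage, every simplex $\sigma \notin Y$ has been replaced by $\Delta^n \sigma$. Condition (2) supplies termination: for each bad $\sigma$ there is some $k > 0$ with $c(\Delta^k \sigma) < c(\sigma)$, so by well-foundedness of $\ZZ_{\geq 0}$ finitely many stages suffice to push $\sigma$ into $Y$. Concatenating the homotopies on the time intervals $[1 - 2^{-n},\, 1 - 2^{-n-1}]$ and taking the limit at time $1$ produces the desired strong deformation retraction of $X$ onto $Y$.

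The main obstacle is the coherence step of the middle paragraph: without (3), the choices $v_\sigma$ could disagree on a shared face, and the simplex-wise homotopies would fail to glue into a continuous global map. Most of the bookkeeping therefore goes into verifying that (3) is precisely the minimal coherence needed to make $H^{(1)}$ well defined, while (1) makes the individual pushes geometrically available inside $X$ and (2) guarantees that the infinite concatenation actually converges to a retraction onto $Y$.
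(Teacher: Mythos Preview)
Your outline is the same strategy as the paper's, but the coherence step as you wrote it does not go through. You claim that when $\tau\subset\sigma$ does not contain $v_\sigma$ the pushes ``agree trivially'' because $H_\sigma|_\tau$ is the identity; however $H_\tau$ itself is \emph{not} the identity whenever $\tau\notin Y$, since it shifts weight from $v_\tau$ to $\Delta v_\tau$. So $H_\sigma|_\tau\neq H_\tau$ in general, and the family $\{H_\sigma\}$ does not glue to a continuous global homotopy $H^{(1)}\colon X\times[0,1]\to X$. (The same obstruction appears if you glue only along maximal simplices: two maximal simplices $\sigma_1,\sigma_2$ can share a face $\tau$ with $v_{\sigma_1}\in\tau$ but $v_{\sigma_2}\notin\tau$, and then $H_{\sigma_1}|_\tau\neq H_{\sigma_2}|_\tau$.)

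The paper sidesteps this by not attempting a uniformly timed single-step homotopy. Instead it follows each point along its entire piecewise-linear trajectory: starting in the interior of $\sigma$, the point transfers its $v_\sigma$-coordinate onto $\Delta v_\sigma$ until that coordinate reaches zero, lands in a face of $\sigma*\Delta v_\sigma$, and then continues according to that face's chosen vertex. The time spent in the $\sigma$-segment equals the $v_\sigma$-coordinate, so it tends to zero as the point approaches any face not containing $v_\sigma$; this is exactly what makes the trajectory depend continuously on the initial point across such faces, while condition~(3) handles faces that do contain $v_\sigma$. Condition~(2) ensures each trajectory reaches $Y$ after finitely many segments, and one then reparametrizes so this happens at time~$1$. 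Your iterated scheme can be repaired along these lines, but not with each stage allotted a fixed unit of time.
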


\begin{proof}[\sc Proof of Theorem {\rm \ref{teoflow}}]
For each $\sigma=\langle v_\sigma, v_1,...,v_k \rangle \notin Y$, we define a flow on  $\sigma*\Delta v_\sigma$. In barycentric coordinates, given a point in $\sigma*\Delta v_\sigma$, we shift the weight from $v_\sigma$ to $\Delta v_\sigma$ while keeping the rest of the weights of the vertices invariant. If $\Delta v_\sigma\in \sigma$ then the flow sends any point in $\sigma$ to the face $\langle v_1,...,\Delta v_\sigma,...,v_k \rangle$. If $\Delta v_\sigma \notin \sigma$, points in $\sigma$ travel to $\langle \Delta v_\sigma, v_1,...,v_k \rangle$ by a flow of lines in $\sigma*\Delta v_\sigma$ (see Figure \ref{Sigmaflow}). 

\setcounter{figure}{3}
\begin{figure}[ht]
\includegraphics[width=10cm]{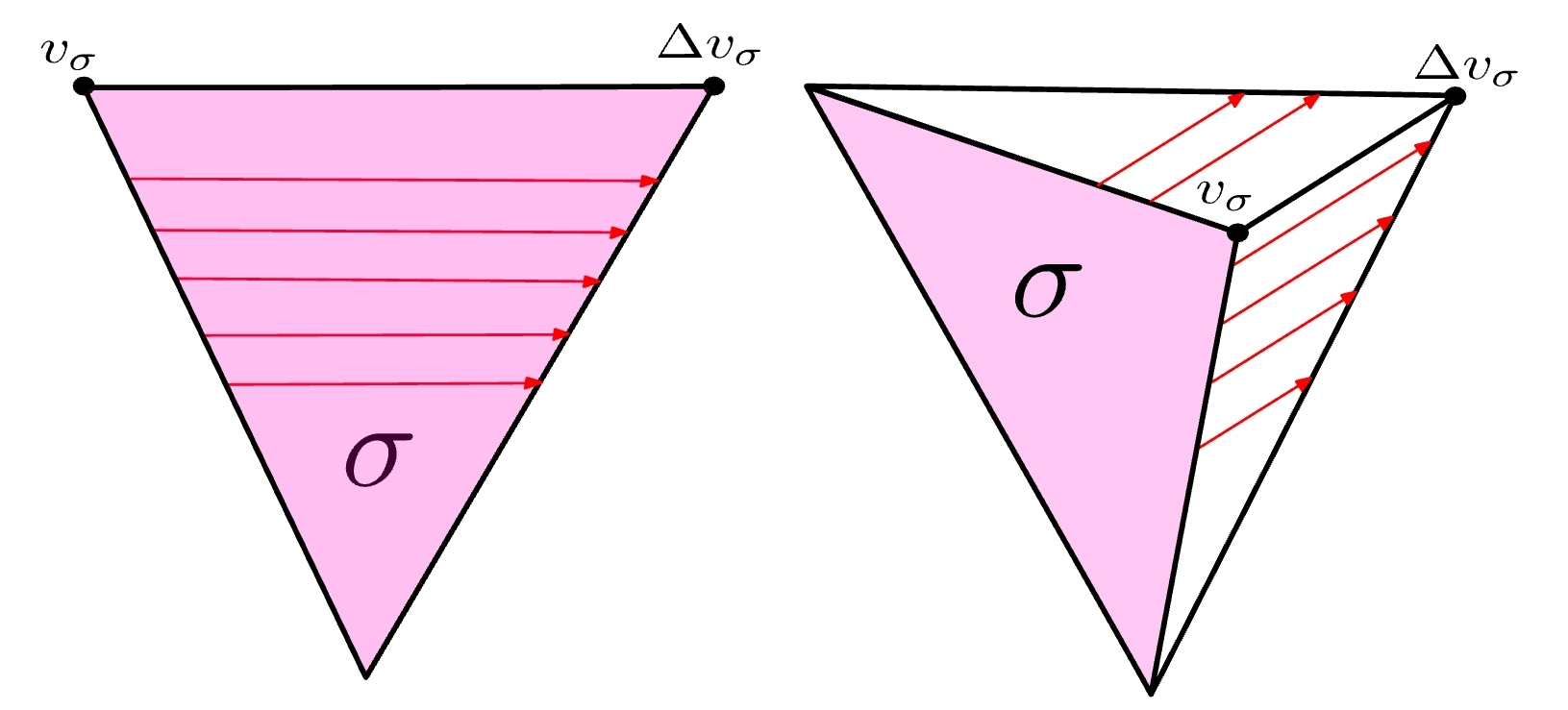}
\centering
\caption{A representation of the flow lines.}
\label{Sigmaflow}
\end{figure}

Condition (3) guarantees that the flow is continuous at every point, i.e. that the flows in different simplices agree in their common faces. Condition (2) guarantees that any point in $X$ reaches $Y$ after flowing through finitely many simplices, as the complexity of the simplices always decreases after a finite number of steps. So, if we make every point travel through the flow as to reach $Y$ in time 1, we get a continuous retraction from $X$ into $Y$.

\end{proof}

\textbf{(D) Bad simplex argument:} These concepts and results are introduced in \cite[Section 2.1]{Tethers}. Let $X$ be a simplicial complex. For each simplex $\sigma$ we select a subset of vertices $\Bar{\sigma}\subset\sigma$, designated as \textit{bad vertices}, such that:
\begin{enumerate}
    \item For $\sigma\subset\tau$, we have $\Bar{\sigma}\subset\Bar{\tau}$. (Monoticity)
    \item For any simplex, $\Bar{\Bar{\sigma}}=\Bar{\sigma}$. (Idempotence)
\end{enumerate}

A simplex $\sigma$ is \textit{good} if $\Bar{\sigma}$ is empty, and it is \textit{bad} if $\Bar{\sigma}=\sigma$. Any face of a good simplex is also good because of monoticity, so the set of good simplices form a subcomplex $X^{\text{good}}\subset X$ denoted the \textit{good complex}. The \textit{good link} $G_\sigma$ of a simplex $\sigma$ consists of proper cofaces $\tau$ of $\sigma$ such that $\Bar{\sigma}=\Bar{\tau}$. Because of monoticity, we deduce that $G_\sigma$ is a subcomplex of the link of $\sigma$. The following theorem is introduced in \cite[Section 2.1]{Tethers}. Here, we phrase it as in \cite[Proposition A.7]{Asymp}, where a proof of the theorem can be found.

\begin{theor}\label{badsim}(Bad simplex argument) Suppose there is $m\in\NN$ such that for all bad simplices $\sigma$, the good link $G_\sigma$ is $(m-\dim(\sigma))$-connected. Then, the inclusion $X^{\text{good}}\xhookrightarrow{} X$ induces an isomorphism in homotopy groups $\pi_d$ for $d\leq m$, and a  epimorphism for $d=m+1$.
\end{theor}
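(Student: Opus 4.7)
The theorem's conclusion is equivalent to the inclusion $X^{\text{good}} \hookrightarrow X$ being $(m+1)$-connected, i.e., that every PL map $f : (D^{d+1}, S^d) \to (X, X^{\text{good}})$ with $d \leq m$ is homotopic rel $S^d$ into $X^{\text{good}}$. My plan is to establish this by inductively pushing $f$ off each bad simplex in its image, exploiting the connectivity of the good links.

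After a simplicial approximation against a fine nondegenerate triangulation of the domain, I may assume $f$ is simplicial, and assign to $f$ the lexicographically ordered multiset $\{|\bar{\sigma}| : \sigma \text{ bad, } \sigma^\circ \cap \mathrm{Im}(f) \neq \emptyset\}$ as its complexity. If this multiset is empty then $f$ already lands in $X^{\text{good}}$; otherwise, I pick a bad simplex $\sigma$ realising the maximal value of $|\bar{\sigma}|$. By monotonicity together with this maximal choice, every coface $\tau$ of $\sigma$ hit by $f$ must satisfy $\bar{\tau} = \bar{\sigma}$, and hence $\tau$ lies in the join $\sigma * G_\sigma$. Letting $L := f^{-1}(\sigma)$ and taking $N$ to be a simplicial regular neighborhood of $L$ in the domain, we then have $f(N) \subset \sigma * G_\sigma$, while $f(\partial N)$ avoids the open star of $\sigma$.

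The central step is to replace $f|_N$ by a map landing in $\partial \sigma * G_\sigma$ rel $\partial N$. This is an extension problem whose obstructions live in cohomology groups of the pair $(N, \partial N)$ with coefficients in the homotopy groups of $G_\sigma$. Using that $G_\sigma$ is $(m - \dim \sigma)$-connected together with the dimension bounds $\dim N \leq d+1 \leq m+1$ and $\dim L \leq d+1 - \dim \sigma$, the obstructions vanish and the extension exists. By idempotence, every bad simplex newly hit by the modified map lies inside $G_\sigma$ and therefore satisfies $\bar{\tau} \cap \bar{\sigma} = \emptyset$, giving $|\bar{\tau}| < |\bar{\sigma}|$. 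The complexity thus strictly decreases under this modification, and the induction terminates with a map into $X^{\text{good}}$.

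\textbf{Main obstacle.} The principal difficulty is the tightness of the dimension count in the extension step: the connectivity bound $(m - \dim \sigma)$ on $G_\sigma$ matches exactly the relative dimension $d+1 - \dim \sigma$, so any slack in either direction would weaken the conclusion. Care is also needed when $L$ is not a submanifold of $D^{d+1}$, in order to make sense of the simplicial regular neighborhood $N$ and to carry out the push-off simplex-by-simplex inside $N$; this is where standard PL-topology bookkeeping enters and needs to be kept compatible with the inductive complexity reduction.
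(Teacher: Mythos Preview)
The paper does not actually supply its own proof of this statement: it is quoted from \cite{Tethers} and the reader is referred to \cite[Proposition A.7]{Asymp} for a proof. Your proposal is the standard argument used in those references---simplicially approximate, pick a bad simplex $\sigma$ of maximal $|\bar{\sigma}|$ in the image, and push the map off it using the connectivity of $G_\sigma$---so in substance you are reproducing the cited proof.

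A small correction worth making: after the push-off, the modified map on $N$ lands in $\partial\sigma * G_\sigma$, not in $G_\sigma$ alone. The newly hit simplices are therefore of the form $\rho\cup\tau$ with $\rho\subsetneq\sigma$ and $\tau\in G_\sigma$, and the key inequality is $\overline{\rho\cup\tau}\subset\overline{\sigma\cup\tau}\cap(\rho\cup\tau)=\sigma\cap(\rho\cup\tau)=\rho$, whence $|\overline{\rho\cup\tau}|\le|\rho|<|\sigma|$. Your sentence ``lies inside $G_\sigma$ and therefore satisfies $\bar{\tau}\cap\bar{\sigma}=\emptyset$'' is not quite the right justification, though the conclusion about the complexity drop is correct. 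The dimension count is cleanest if phrased as: $\partial\sigma$ is $(\dim\sigma-2)$-connected and $G_\sigma$ is $(m-\dim\sigma)$-connected, so the join $\partial\sigma*G_\sigma$ is $m$-connected; since $\sigma*G_\sigma$ is contractible, the pair is $(m+1)$-connected, which suffices for $\dim N\le d+1\le m+1$.
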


\textbf{(E) Join complex:} The concept of \textit{join complex} is introduced by Hatcher and Wahl in \cite[Definition 3.2]{Disk}.

\begin{defi}\label{def-join}
A \textit{join complex} over a simplicial complex $X$ is a simplicial complex together with a simplicial map $p:Y\rightarrow X$ satisfying the following properties:
    \begin{enumerate}
        \item $p$ is surjective,
        \item $p$ is injective on individual simplices,
        \item for each $p$-simplex $\sigma=[ x_0,...,x_p]$ of $X$ the subcomplex $Y(\sigma)$ of $Y$ consisting of all the $p$-simplices whose image is $\sigma$ is the join $Y_{x_0}(\sigma)*...*Y_{x_p}(\sigma)$ of the vertex sets $Y_{x_i}(\sigma)=Y(\sigma)\cap p^{-1}(x_i).$
    \end{enumerate}
    The sets $Y(\sigma)$ and $p^{-1}(\sigma)$ are not necessarily equal. When the inclusions $Y_{x_i}(\sigma)\subset p^{-1}(x_i)$ are all equalities we say that $Y$ is a \textit{complete} join over $X$.
\end{defi}

The following theorem is a combination of \cite[Remark A.14.]{Asymp} and \cite[Proposition 3.5]{Disk}.

\begin{theor}\label{join2}
    If $Y$ is a complete join complex over a simplicial complex $X$ of dimension $n$ then:
    
    \begin{itemize}
        \item if $X$ is wCM of dimension $n$ then so is $Y$,
        \item if $Y$ is $k$-connected with $k\leq n$, then $X$ is $k$-connected.
        \end{itemize}
\end{theor}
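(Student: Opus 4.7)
The two statements are largely independent and I would treat them separately. The second one is the simpler of the two: I would construct a simplicial section $s \colon X \to Y$ of $p$ by choosing, for each vertex $v$ of $X$, an arbitrary preimage $s(v) \in p^{-1}(v)$. The completeness hypothesis is exactly what makes this work: for any simplex $\sigma = [v_0,\ldots,v_p]$ of $X$, the vertices $s(v_0),\ldots,s(v_p)$ all lie in the join $p^{-1}(v_0) \ast \cdots \ast p^{-1}(v_p)$, which by Definition \ref{def-join}(3) coincides with the subcomplex $Y(\sigma)$, so they span a $p$-simplex of $Y$. Since $p \circ s = \mathrm{id}_X$, the induced map $s_\ast \colon \pi_k(X) \to \pi_k(Y)$ is a split monomorphism in every degree, so $k$-connectedness of $Y$ forces $k$-connectedness of $X$.

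For the first statement, I would argue by induction on $n$, with the base case $n=0$ being trivial because both $X$ and $Y$ are merely required to be nonempty and $p$ is surjective. For the inductive step, the first observation is that for any $d$-simplex $\tilde\tau$ of $Y$, with image $\tau := p(\tilde\tau)$ (a $d$-simplex of $X$, by injectivity of $p$ on simplices), the restriction of $p$ to $\Lk(\tilde\tau,Y) \to \Lk(\tau,X)$ is again a complete join. Surjectivity onto $\Lk(\tau,X)$ follows because any vertex $w$ of $\Lk(\tau,X)$ has $p^{-1}(\tau \ast w)$ equal to the full join of the vertex preimages, so any $\tilde w \in p^{-1}(w)$ yields $\tilde\tau \ast \tilde w \in Y$; the completeness condition for simplices of $\Lk(\tau,X)$ is just the completeness for $p$ restricted to simplices of the form $\tau \ast \eta$. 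Since $X$ is wCM of dimension $n$, $\Lk(\tau,X)$ is wCM of dimension $n - d - 1$, so by the inductive hypothesis $\Lk(\tilde\tau,Y)$ is wCM of dimension $n-d-1$, giving in particular the required $(n-d-2)$-connectedness.

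To close the induction, I would show that $Y$ itself is $(n-1)$-connected via a direct simplicial lifting argument. Given a simplicial map $f \colon T \to Y$ where $T$ is a triangulation of $S^k$ with $k \leq n-1$, the composition $p \circ f \colon T \to X$ is null-homotopic since $X$ is $(n-1)$-connected; after a compatible simplicial approximation and refinement, it extends to a simplicial $g \colon T' \to X$ where $T'$ triangulates $D^{k+1}$ and restricts on $\partial T'$ to a refinement of $f$. One then lifts $g$ to $\tilde g \colon T' \to Y$ by choosing, for each vertex $w \in T' \smallsetminus \partial T'$, any vertex in $p^{-1}(g(w))$; completeness guarantees that \emph{any} such choice automatically respects adjacency, since the images span a simplex of $X$ and the corresponding join of preimage vertex sets contains all choices. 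This produces the desired null-homotopy of $f$ in $Y$.

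The main obstacle in this plan is really the link reduction: one must carefully check that restricting a complete join to the link of a simplex preserves the complete join structure, since the inductive hypothesis is applied there. Once that is in hand, the lifting argument for global connectivity becomes essentially a formality, as completeness removes the usual obstructions encountered when lifting simplicial maps along non-injective maps.
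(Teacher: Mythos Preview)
The paper itself does not prove this result but cites \cite[Remark~A.14]{Asymp} and \cite[Proposition~3.5]{Disk}. Your argument for the second bullet---constructing a simplicial section $s$ of $p$---is exactly the standard one and is correct. For the first bullet your approach is valid but more elaborate than Hatcher--Wahl's: they avoid both the induction on $n$ and the sphere-lifting by observing that the same section $s$ is in fact a two-sided homotopy inverse to $p$. The homotopy $\mathrm{id}_Y \simeq s\circ p$ slides each vertex $y$ linearly to $s(p(y))$; this is well-defined because completeness guarantees that for any simplex $[y_0,\ldots,y_p]$ of $Y$ the enlarged set $\{y_0,\ldots,y_p,s(p(y_0)),\ldots,s(p(y_p))\}$ still spans a simplex of $Y$, lying inside the join $p^{-1}(x_0)\ast\cdots\ast p^{-1}(x_p)$. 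Thus $Y\simeq X$ outright, and applying the same equivalence to links (via your link reduction, which is correct and is also what \cite{Disk} does) finishes the wCM statement without any induction. Your lifting argument can also be made to work, but note a small imprecision: $g$ lands in $X$, so it cannot ``restrict on $\partial T'$ to a refinement of $f$''; what you need is relative simplicial approximation keeping $\partial T'=T$ and $g|_T=p\circ f$ on the nose, after which the lift $\tilde g$ may be taken equal to $f$ on the boundary and arbitrary over interior vertices. The direct homotopy-equivalence route buys you a shorter proof free of simplicial-approximation technicalities; your route has the virtue of making explicit why any sphere in $Y$ bounds, but at the cost of the extra bookkeeping.
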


\subsection{The piece complex.}\label{subspiececpx} 
Each descending link $\dl{(v,\Xc)}$ can be studied as a complete join over a \textit{piece complex}, which we now introduce following the approach in \cite[Subsection 6.1]{Asymp}. Let $M_g$ be a spotted handlebody, $A$ its set of spots, and $B\subset A$ a subset of spots. 

By an \textit{embedded disc} in $M_g$ we mean an embedding $D:\DD^2\rightarrow M_g$ of a closed disc (or to its image, abusing terminology) with the condition that $\partial D\cap \int(M_g)=\emptyset$, $\int(D)\cap\partial M_g=\emptyset$, and $D\cap A=\emptyset$. 

The \textit{piece complex} $\Pc(M_g,A,B)$ is the simplicial complex whose vertices are isotopy classes of spotted subhandlebodies homeomorphic to $Y^d$ (not necessarily pieces), with one spot being an embedded disc $D$, and the other $d$ spots being elements of $B$; and whose $k$-simplices are collections of  $k+1$ vertices, with a set of pairwise disjoint representatives. When $A=B$ we omit $B$ from the notation and simply write $\Pc(M_g,A)$.

\begin{figure}[h]
\includegraphics[width=10cm]{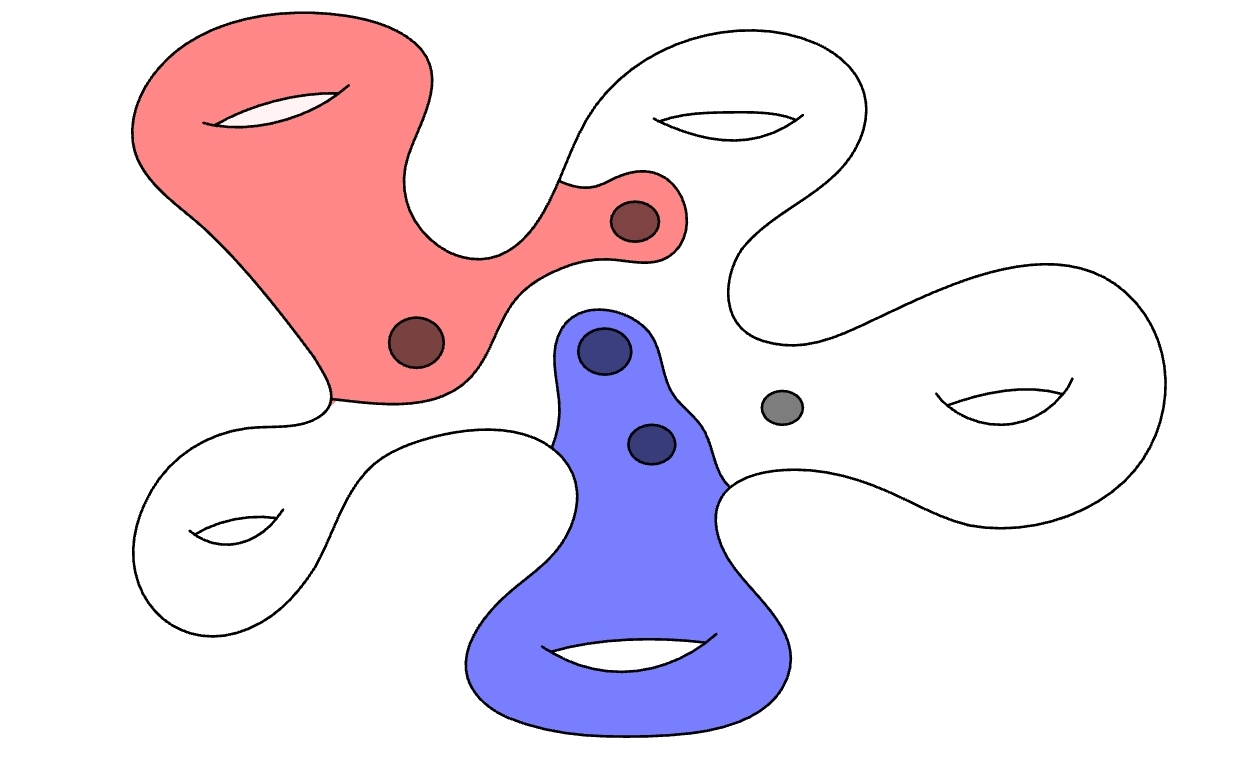}
\centering
\caption{A 1-simplex in $\Pc(M_5,A)$.}
\label{pieces}
\end{figure}

Let $v\in \Xc$. Up to the action of $\H$ we can write $v=[(M,\id)]$. A simplex $\sigma\in\dl(v,\Xc)$ can be represented as $(N,g)\in[(M,\id)]$ and $N'\subset N$ such that $\overline{N\setminus N'}$ is a collection $\{Y_0,...Y_k\}$ of disjoint pieces. There is a map:

$$\Pi: \dl(v,\Xc)\rightarrow \Pc(M_g,A)$$

where $A$ is the set of spots of $M$, and where
$\Pi(\sigma)=[ Z_1,...,Z_k]$, with each $Z_i= g(Y_i)$. 

The proof of the following proposition mimics \cite[Proposition 6.6]{Asymp}, relying on the key fact that handlebodies satisfy the cancellation property (see Appendix \ref{Ap1}). 

\begin{prop}\label{teo-linkpiece}
With respect to the map $\Pi$, the descending link $\dl(v,\Xc)$ is a complete join over the piece complex $\Pc(M_g,A)$.
\end{prop}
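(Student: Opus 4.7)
The plan is to verify directly the four conditions required by Definition \ref{def-join}: (i) $\Pi$ is surjective; (ii) $\Pi$ is injective on each individual simplex; (iii) for every $p$-simplex $\sigma = [Z_0, \ldots, Z_p]$ of $\Pc(M_g, A)$, the subcomplex of $\dl(v,\Xc)$ spanned by $p$-simplices mapping to $\sigma$ decomposes as the join $\dl(v,\Xc)_{Z_0}(\sigma) * \cdots * \dl(v,\Xc)_{Z_p}(\sigma)$; and (iv) completeness, i.e.\ $\dl(v,\Xc)_{Z_i}(\sigma) = \Pi^{-1}(Z_i)$ for every $i$. The argument parallels \cite[Proposition 6.6]{Asymp}, with boundary connected sums replacing connected sums, and the essential topological input is the cancellation property for spotted handlebodies (Appendix \ref{Ap1}).

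For \textbf{surjectivity}, I would start from a vertex $[Z]\in\Pc(M_g,A)$, represented by a subhandlebody $Z\subset M$ of type $Y^d$ with unique interior spot $D$. Cutting $M$ along $D$ produces the decomposition $M = (\overline{M\setminus Z}) \natural Z$. One then constructs a suited handlebody $N\subset \T$ containing a piece $Y_0$ positioned analogously to $Z$, together with a self-homeomorphism $g\in\H$ that sends $N\to M$, $\overline{N\setminus Y_0}\to \overline{M\setminus Z}$, and $Y_0\to Z$ respecting spots, and that is rigid on $\T\setminus N$. The pair $(N,g)$ together with the distinguished piece $Y_0$ then determines a vertex of $\dl(v,\Xc)$ mapping to $[Z]$.

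For \textbf{injectivity on simplices} and the \textbf{join/completeness} conditions, fix a $p$-simplex $\sigma = [Z_0,\ldots,Z_p]$ of $\Pc(M_g,A)$ realized by pairwise disjoint subhandlebodies of $M$. A $p$-simplex of $\dl(v,\Xc)$ mapping to $\sigma$ amounts to a representative $(N,g)$ of $v$ and a subhandlebody $N'\subset N$ whose complement decomposes as $\overline{N\setminus N'} = Y_0\sqcup \cdots\sqcup Y_p$ with $g(Y_i)=Z_i$. Writing $M = (\overline{M\setminus \bigcup_i Z_i})\natural Z_0\natural \cdots \natural Z_p$ as spotted handlebodies, the cancellation property determines $\overline{M\setminus\bigcup_i Z_i}$ uniquely up to isotopy fixing spots, and hence the class $[(N',g)]\preceq v$ is uniquely pinned down by $\sigma$. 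This yields injectivity on $p$-simplices. Conversely, any choice of vertices $\tilde Z_i\in\Pi^{-1}(Z_i)$ can be simultaneously realized inside a common representative, because the $Z_i$'s are already pairwise disjoint in $M$; this produces a $p$-simplex of $\dl(v,\Xc)$ over $\sigma$ containing all the $\tilde Z_i$'s, yielding both the join decomposition and the completeness condition.

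The \textbf{main obstacle} is the cancellation step: two a priori distinct representatives $(N^1,g_1)$ and $(N^2,g_2)$ of $v$ can give rise to the same tuple $(Z_0,\ldots,Z_p)$ in $M$, and one must verify that the corresponding sub-representatives $(N^{1\prime},g_1)$ and $(N^{2\prime},g_2)$ then agree as elements of $\P$, not merely as subsets of $\T$ after applying the $g_i$'s. Once this handlebody-theoretic ingredient is in place, the remaining verifications reduce to combinatorial bookkeeping in the spirit of \cite[Proposition 6.6]{Asymp}.
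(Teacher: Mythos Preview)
Your overall plan—verify the conditions of Definition~\ref{def-join} following \cite[Proposition 6.6]{Asymp}, with the cancellation property of Appendix~\ref{Ap1} as the handlebody-specific input—matches the paper's, and your treatment of surjectivity is along the right lines.

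There is, however, a genuine confusion in your third paragraph and in what you flag as the ``main obstacle''. The assertion that ``the class $[(N',g)]\preceq v$ is uniquely pinned down by $\sigma$'' is false: a vertex $[Z]\in\Pc(M_g,A)$ has many preimages under $\Pi$, one for each isotopy class of identification $Y^d\to Z$ compatible with the rigid structure (cf.\ Lemma~\ref{lemainffibre} in the star case), so there are in fact many $p$-simplices of $\dl(v,\Xc)$ lying over a given $\sigma$. Indeed, this is precisely what your own join/completeness sentence claims two lines later, so the two halves of that paragraph are in tension. What $\sigma$ determines is only the image $g(N')=\overline{M\setminus\bigcup_i Z_i}$ inside $M$; the class $[(N',g)]\in\P$ additionally records how $g$ identifies each piece $Y_i$ with $Z_i$ relative to the preferred rigid structure, and that extra datum is exactly what parametrises the fibre. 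Consequently your ``main obstacle'' asks you to prove something that does not hold. Condition~(ii) is in fact elementary and does not use cancellation: the vertices of a $p$-simplex correspond to pairwise disjoint pieces $Y_0,\ldots,Y_p$, whose images $g(Y_i)$ contain pairwise disjoint subsets of spots from $A$ and hence represent distinct vertices of $\Pc(M_g,A)$. The cancellation property enters in surjectivity (so that $\overline{M\setminus Z}$ is homeomorphic to a suited handlebody and the required $g\in\H$ can be constructed) and again in the join/completeness step (applied iteratively to assemble a single representative $(N,g)$ realising any prescribed tuple $(\tilde Z_0,\ldots,\tilde Z_p)$ of fibre elements).
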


We will use Theorem \ref{join2} and Lemma \ref{teo-linkpiece} to prove a bound on the connectivity of $\dl(v,\Xc)$. The rest of the section is devoted to finding connectivity  bounds for $\Pc(M_g,A)$. Particularly we will prove:

\begin{theor}\label{teo-piece}
Let $m= \left\lfloor\min\left\{ \frac{g-3}{2}, \frac{|A|+1}{2d-1}-2,|A|-2\right\}\right\rfloor$. Then the piece complex $\Pc(M_g,A)$ is wCM of dimension $m+1$.
\end{theor}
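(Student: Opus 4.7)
The plan is to deduce the wCM property of $\Pc(M_g, A)$ by propagating connectivity estimates along the chain of simplicial complexes displayed in Diagram 1, starting from the classical (non-separating) disc complex and ending with the piece complex itself. Throughout, it is natural to work with the three-parameter family $\Pc(M_g, A, B)$, since the link calculation forces $B$ to shrink inside $A$, and to proceed by induction on $(g, |B|)$. Concretely, by Proposition \ref{teo-linkpiece} together with the complete-join criterion of Theorem \ref{join2}, it suffices to prove the analogous wCM statement for the complex $\OD$ at the bottom of Diagram 1.

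The base case of the induction is the disc complex $\D_c(M_g, A)$ in the top-left corner, which is wCM of dimension $\lfloor (g-3)/2 \rfloor$ by work of Hatcher and Wahl. From $\D_c$ I would walk along the arrows of Diagram 1 through $\DTD$, $\DTD^{+}$, $\TCD$, $\CD$, $\HTD$, $\HTD^{+}$, $\TH$, and $\OD$, applying at each step one of the five tools (A)--(E). Passages to a barycentric subcomplex are handled by Theorem \ref{teo-m}; forgetful maps with highly connected fibers by Theorem \ref{fiber} or Theorem \ref{fiber2}; inclusions that fail to be deformation retracts only on a well-behaved class of ``bad'' simplices by Theorem \ref{badsim}; and retractions where a discrete flow trades one vertex for another of strictly smaller complexity by Theorem \ref{teoflow}. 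Each tool contributes a penalty to the final connectivity estimate, and the three bounds in the expression for $m$ arise from three independent sources: $\tfrac{g-3}{2}$ is the genus constraint inherited from $\D_c$; $\tfrac{|A|+1}{2d-1}-2$ reflects the interaction of the barycentric trimming with the $d$ spots consumed per piece; and $|A|-2$ enforces the existence of enough disjoint pieces to support the required simplices.

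To upgrade $m$-connectivity to the full wCM statement (dimension $m+1$), I would verify the link condition by induction. The link of a $k$-simplex $[Z_0, \ldots, Z_k]$ in $\Pc(M_g, A)$ is itself a piece complex $\Pc(M', A', B')$ on the spotted handlebody $M'$ of genus $g - k - 1$ obtained by excising the $k + 1$ pieces, with $|A'| = |A| - (k+1)(d-1)$ and $|B'| = |A| - (k+1)d$. Each of the three bounds drops by at most $k + 1$ under this operation, while the required connectivity decreases by exactly $k + 1$, so a direct arithmetic check, facilitated by the three-parameter formulation, closes the induction.

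The main obstacle will be controlling the fibers of the forgetful maps that appear when one passes between tethered-disc complexes and $\OD$ (or $\Pc$). These fibers parameterize the ways of completing a cutting disc to an entire piece inside the ambient handlebody while remaining disjoint from data already chosen, and their high connectivity is what ultimately enables the use of Theorem \ref{fiber} and Theorem \ref{fiber2}. In the handlebody setting, where boundary connected sums replace connected sums, this is precisely where the inclusion, intersection, and cancellation properties proved in Appendix \ref{Ap1} must come into play, and I anticipate that making this step fully rigorous, rather than any of the arithmetic in the previous paragraphs, will be the central technical effort.
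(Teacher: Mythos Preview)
Your overall strategy---traversing Diagram~1 to establish $m$-connectivity and then verifying the link condition inductively---matches the paper's, and your link arithmetic is correct. But the opening reduction is mistaken. Proposition~\ref{teo-linkpiece} says the descending link $\dl(v,\Xc)$ is a complete join over $\Pc(M_g,A)$; it says nothing about $\Pc$ over $\OD$, and plays no role in proving Theorem~\ref{teo-piece} (it is used only afterward, for Corollary~\ref{cor-lk}). In the paper, the bridge from the diagram to $\Pc$ is not a complete-join reduction at all: the map $\HTD(M_g,A,B)\to\Pc(M_g,A,B)$ sending a handle-tether-disc to its regular neighborhood is a simplicial \emph{isomorphism}, so the $m$-connectivity of $\Pc$ is read off directly from Theorem~\ref{teo-htb2}. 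The complex $\OD$ sits on the \emph{input} side of the chain---its connectivity (Lemma~\ref{lema-ball}, quoted from \cite{Braided}) feeds into $\HTD^+$ via Theorem~\ref{fiber} and a nerve argument (Theorem~\ref{teo-htb1}), after which a bad-simplex argument yields $\HTD$---not the output side you have placed it on.

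Two smaller slips: $\D_c(M_g,A)$ is wCM of dimension $g-1$ (Lemma~\ref{DiscConect}), not $\lfloor(g-3)/2\rfloor$; the latter bound appears only after the passage to $\DTD$. And the term $\tfrac{|A|+1}{2d-1}-2$ is not a barycentric artifact but exactly the connectivity of $\OD$ from Lemma~\ref{lema-ball}.
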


Because of Proposition \ref{teo-linkpiece}, Theorem \ref{teo-piece}, and Theorem \ref{join2}, we deduce the following corollary: 

\begin{cor}\label{cor-lk}
The descending link $\dl(v,\Xc)$ of a vertex $v=[(X,\id)]$ is $m$-connected, with $$m=\min\left\{\left\lfloor\frac{g-3}{2}\right\rfloor,\left\lfloor \frac{|A|+1}{2d-1}-2\right\rfloor, |A|-2\right\},$$ where $|A|$ is the number of spots of $X$ and $g$ its genus.
\end{cor}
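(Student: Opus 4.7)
\bigskip

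\textbf{Proof plan for Corollary \ref{cor-lk}.} The corollary is essentially a formal consequence of the three results it cites, and the plan is to assemble them in the obvious way. First, by Proposition \ref{teo-linkpiece}, the descending link $\dl(v,\Xc)$ is a complete join complex over the piece complex $\Pc(M_g,A)$, where $A$ is the set of spots and $g$ the genus of the suited handlebody $X$ representing $v$. This brings Theorem \ref{join2} into play.

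Second, Theorem \ref{teo-piece} tells us that $\Pc(M_g,A)$ is wCM of dimension $m+1$, for $m = \lfloor \min\{(g-3)/2, (|A|+1)/(2d-1) - 2, |A|-2\}\rfloor$. By definition of weakly Cohen-Macaulay, this already includes the statement that $\Pc(M_g,A)$ is $m$-connected.

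Finally, I would invoke the first bullet of Theorem \ref{join2}: since $\dl(v,\Xc)$ is a complete join over a simplicial complex which is wCM of dimension $m+1$, the descending link itself is wCM of dimension $m+1$, and in particular $m$-connected. This gives the claimed bound. There is no real obstacle here beyond bookkeeping; the substantive content is contained in Proposition \ref{teo-linkpiece} and Theorem \ref{teo-piece}, both of which are treated elsewhere in the paper. The only care needed is to ensure that the dimension in Theorem \ref{join2} matches the one supplied by Theorem \ref{teo-piece}, which is automatic from the formula for $m$.
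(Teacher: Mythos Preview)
Your proposal is correct and matches the paper's approach exactly: the corollary is stated there as an immediate consequence of Proposition~\ref{teo-linkpiece}, Theorem~\ref{teo-piece}, and Theorem~\ref{join2}, with no further argument given. Your write-up simply spells out the obvious way these three results combine, which is all that is needed.
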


\begin{rem}\label{rem-3.11}
Let $v\in \Xc$. Although $r$ is not explicitly included in the notation for $\dl(v,\Xc)$, we emphasize that the connectivity of $\Pc(M_g,A)$, and thus that of $\dl(v,\Xc)$, does depend on $r$, since for a suited handlebody $(M,A)$ corresponding to a vertex $v$ of height $h(v)=k$, we have that $|A|=r+k(d-1)$.

\end{rem}

The structure of the rest of the section is as follows. Each subsection is dedicated to introducing the complexes in Diagram \ref{diagr1}, and to analyzing their connectivity.

\subsection{The coconnected disc system complex.} 
The \textit{coconnected disc system complex} $\D_c(M_g,A)$, introduced in \cite[Section 8]{Disk}, is the simplicial complex whose $k$-simplices are collections of $k+1$ isotopy classes of embedded discs with a set of pairwise disjoint representatives so that the complement of their union is connected (see Figure \ref{Coconnected}). We will blur the difference between isotopy classes and their representatives.

\begin{figure}[ht]
\includegraphics[width=10cm]{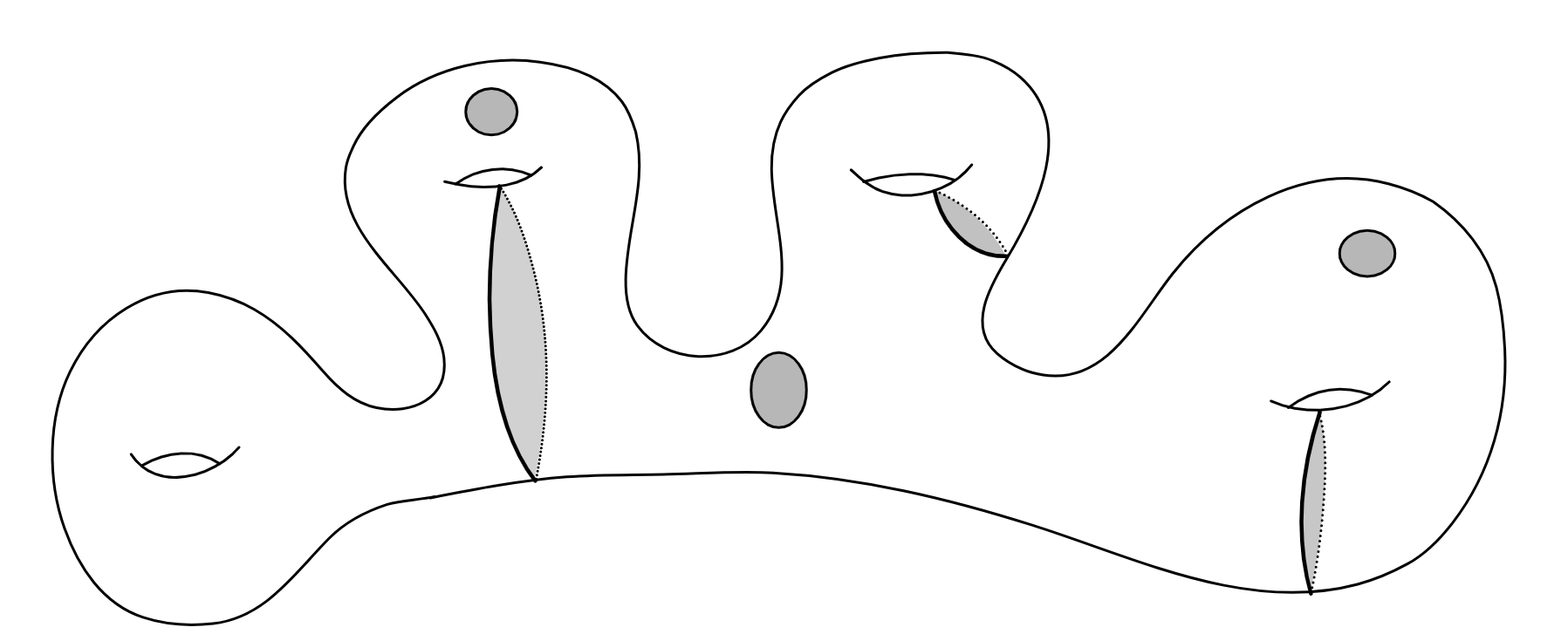}
\centering
\caption{A 2-simplex in $\D_c(M_4,A)$.}
\label{Coconnected}
\end{figure}

The next result can be found in \cite[Proposition 8.4]{Disk}:

\begin{lem}\label{DiscConect}
$\D_c(M_g,A)$ is wCM of dimension $g-1$.
\end{lem}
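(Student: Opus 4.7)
The plan is to establish both parts of the wCM definition by induction on the genus $g$: namely, that $\D_c(M_g,A)$ is $(g-2)$-connected and that $\Lk(\sigma,\D_c(M_g,A))$ is $(g-d-3)$-connected for every $d$-simplex $\sigma$.

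The link condition reduces cleanly to the global connectivity at lower genus. For a $d$-simplex $\sigma=\{D_0,\ldots,D_d\}$ in $\D_c(M_g,A)$, I would cut $M_g$ along disjoint representatives of the $D_i$. Coconnectedness of $\sigma$ guarantees that each successive cut preserves connectedness, so the result is a handlebody $M'$ of genus $g-d-1$, with spot set $A'$ obtained from $A$ by adjoining the $2(d+1)$ new discs produced by the cuts. An embedded disc in $M_g$ disjoint from $\sigma$ corresponds to a unique embedded disc in $M'$, and a system $\sigma\sqcup\tau$ is coconnected in $M_g$ if and only if $\tau$ is coconnected in $M'$. Consequently $\Lk(\sigma,\D_c(M_g,A))\cong\D_c(M',A')$, which by the inductive hypothesis is $(g-d-3)$-connected, as required.

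For the global $(g-2)$-connectivity, the base case $g=1$ reduces to nonemptyness, witnessed by any meridian disc of the genus-one handlebody. For the inductive step, I would compare $\D_c(M_g,A)$ with the ambient disc system complex $\D(M_g,A)$ in which the coconnectedness condition is dropped. A maximal system in $\D(M_g,A)$ consists of $g$ discs cutting $M_g$ into a ball, and standard arguments show that $\D(M_g,A)$ is wCM of dimension $g-1$, hence in particular $(g-2)$-connected. I would then apply the bad simplex argument (Theorem \ref{badsim}) to the inclusion $\D_c\hookrightarrow\D$, declaring a simplex bad when its complement in $M_g$ is disconnected, with bad vertices chosen so that their removal produces a maximal coconnected subsimplex.

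The main obstacle is the good link step of this bad simplex argument. For a bad simplex $\sigma$, cutting $M_g$ along $\sigma$ yields a disjoint union of handlebodies whose genera sum to at most $g-1$, and the good link parameterizes discs whose addition to $\sigma$ restores coconnectedness by bridging these components. Establishing that this good link has the required connectivity will require a careful surgery argument: replacing separating discs with non-separating bridges across components with enough available genus, and invoking the inductive hypothesis on each component. The tight accounting of discs-versus-genus in this step is what produces the bound $g-1$ on the wCM dimension, and it is the technical heart of the proof.
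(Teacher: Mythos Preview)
The paper does not prove this lemma; it is quoted directly from \cite[Proposition~8.4]{Disk} (Hatcher--Wahl), so there is no argument in the paper to compare against. Your link computation is correct and standard: cutting along a coconnected $d$-simplex drops the genus by $d+1$ and identifies the link with $\D_c$ of the cut handlebody, so the link condition reduces to the global connectivity at lower genus.

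Your plan for the global $(g-2)$-connectivity, however, has a gap that appears before the good-link step you flag as the hard part. The bad simplex machinery (Theorem~\ref{badsim}) requires a \emph{well-defined} assignment $\sigma\mapsto\bar\sigma$ satisfying monotonicity and idempotence, and your rule---take $\bar\sigma$ to be a set whose removal leaves a maximal coconnected subsimplex---is not canonical. Coconnectedness of $\{D_0,\dots,D_d\}$ is equivalent to linear independence of the classes $[D_i]$ in $H_2(M_g,\partial M_g)\cong\ZZ^g$, and a dependent set has no preferred maximal independent subset; different choices of $\bar\sigma$ are possible and no uniform choice satisfies both axioms. If instead you declare $D_i$ bad when $[D_i]=0$ (i.e.\ when $D_i$ is itself separating), the axioms do hold, but the resulting good complex is the non-separating disc complex rather than $\D_c$, since two individually non-separating discs can still disconnect $M_g$. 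A smaller point: with spots present, maximal simplices of the full complex $\D(M_g,A)$ have more than $g$ vertices, because after cutting along $g$ meridians one can continue adding discs that separate off spots; $\D(M_g,A)$ is still contractible, but not for the dimension reason you give. Hatcher and Wahl's actual argument proceeds differently, and you would need to consult it or devise another mechanism for the passage from $\D$ to $\D_c$.
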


\subsection{The doubly-tethered disc complex.}

We start the subsection by giving some general notation necessary for defining the remaining complexes. Let $(M_g,A)$ be a spotted handlebody. An \textit{arc} $\alpha$ in $\partial M_g$ is a continuous map $\a:[0,1]\rightarrow \partial M_g$. We call $\int(\alpha):=\a((0,1))$ the \textit{interior} of the arc, and $\a(0)$ and $\a(1)$ the \textit{endpoints}. We say an arc is \textit{simple} if $\a|_{(0,1)}$ is injective. We often blur the difference between an arc and its image. We define the intersection number between any two maps as $i(f,g):=|\Image(f)\cap \Image(g)|$. 

Let $I$ be a collection of open intervals in $\partial A$. A \textit{doubly-tethered disc} is a pair $(D,\a)$, where:

\begin{itemize}
    \item $D$ is an embedded disc, 
    \item $\a$ is a \textit{double tether}, i.e. a simple arc with $\int(\a)\subset\partial M_g\setminus A$ and both endpoints on $I$, 
    \item $i(\a,D)=1$.
\end{itemize}

The \textit{doubly-tethered disc complex} $\DTD(M_g,A,I)$ is the simplicial complex whose $k$-simplices are collections of $k+1$ isotopy classes of doubly-tethered discs with a set of pairwise disjoint representatives $\{(D_i,\a_i)\}_{0\leq i\leq k}$  such that $M_g\setminus\cup D_i$ is connected. In subsection \ref{Subsec3.6}, we will prove:

\begin{theor}\label{Teo-dtd}
$\DTD(M_g,A,I)$ is $\lfloor(g-3)/2\rfloor$-connected.
\end{theor}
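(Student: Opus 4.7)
The plan is to compare $\DTD(M_g,A,I)$ with the coconnected disc system complex $\D_c(M_g,A)$, whose wCM-dimension $g-1$ is recorded in Lemma \ref{DiscConect}, through the intermediate complex $\DTD^+(M_g,A,I)$ indicated in Diagram \ref{diagr1}. The overall shape of the argument is: first compare $\DTD$ and $\DTD^+$ via a bad simplex argument, then compare $\DTD^+$ and $\D_c$ by combining the barycentric, Quillen-fiber and flow-retraction tools as the labels on Diagram \ref{diagr1} suggest.

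First I would introduce $\DTD^+(M_g,A,I)$ as the version of $\DTD(M_g,A,I)$ obtained by dropping the coconnectedness condition, so that simplices are just collections $\{(D_i,\a_i)\}$ of pairwise disjoint doubly-tethered discs with no hypothesis on $M_g\setminus\bigcup D_i$. The natural inclusion $\DTD(M_g,A,I)\hookrightarrow\DTD^+(M_g,A,I)$ gives the hook in the diagram, and I would use the bad simplex argument (Theorem \ref{badsim}), declaring a simplex bad when the underlying discs disconnect $M_g$, to transfer connectivity from $\DTD^+$ to $\DTD$. The good link of a bad simplex $\sigma$ is analyzed inductively in $g$: cutting $M_g$ along the discs of $\sigma$ produces spotted handlebodies of strictly smaller genus, and the good link essentially becomes a join of doubly-tethered disc complexes on those pieces, allowing induction to close.

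Second, I would study the forgetful map $p:\DTD^+(M_g,A,I)\to\D_c(M_g,A)$ sending $(D,\a)\mapsto D$ using the sequence of tools (A), (B), (C). The idea is to pass to a suitable barycentric subcomplex via Theorem \ref{teo-m}, so that $(\D_c)_m$ is $(g-1-m)$-connected for a value of $m$ chosen to match the target bound $\lfloor (g-3)/2\rfloor$; then apply Quillen's fiber theorem in the form of Theorem \ref{fiber2} to the induced map on barycentric subcomplexes, reducing the question to contractibility of the fibers; and finally employ Theorem \ref{teoflow} to flow the data of a tuple of tethers onto a preferred representative inside each fiber. Each fiber $p^{-1}(\sigma)$ parametrizes compatible systems of double tethers on the surface obtained by cutting $\partial M_g$ along $\sigma$, and can be identified with an arc complex on this cut-open surface with endpoints in the residues of $I$.

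The main obstacle will be the connectivity estimate for these fibers. After cutting $M_g$ along a coconnected disc system $\{D_0,\dots,D_k\}$ one obtains a compact surface-with-boundary on which the allowed tethers are simple arcs with endpoints in the subintervals of $I$ that survive the cut, disjoint from the spots in $A$ and intersecting each $D_i$ exactly once. One must show that this arc complex has high enough connectivity to match $\lfloor(g-3)/2\rfloor$, while carefully tracking how the tether-available boundary of $I$ shrinks as $k$ grows and how the cut boundary components (which carry none of $I$) interact with the tethers. This bookkeeping, together with the induction on $g$ feeding back into the bad simplex step, forms the technical heart of the argument; the halving in the bound precisely reflects the fact that each vertex of $\DTD$ packages both a disc and a transverse tether, so that one pays for two pieces of data per vertex of $\D_c$.
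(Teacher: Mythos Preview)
Your plan diverges from the paper in the definition of $\DTD^+(M_g,A,I)$, and this creates a genuine gap.  In the paper, $\DTD^+$ does \emph{not} drop the coconnectedness condition; it keeps $M_g\setminus\bigcup D_i$ connected but instead relaxes disjointness, allowing several doubly-tethered discs to share the same disc $D$ and allowing tethers to intersect provided the intersections can be removed by Dehn twisting around the $\partial D_i$.  With your definition (drop coconnectedness, keep disjointness) the forgetful map you write down, $p:\DTD^+(M_g,A,I)\to\D_c(M_g,A)$, is not well defined: a simplex whose discs disconnect $M_g$ has no image in $\D_c$.  So the second step of your outline, which is supposed to supply the connectivity input via Lemma \ref{DiscConect} and Theorems \ref{teo-m}/\ref{fiber2}, does not get off the ground.

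The first step also has a problem.  Declaring a simplex ``bad when the underlying discs disconnect $M_g$'' does not specify a bad-vertex assignment $\sigma\mapsto\bar\sigma$ satisfying the monotonicity and idempotence axioms of Theorem \ref{badsim}.  The most natural choice, $\bar\sigma=\sigma$ whenever the discs of $\sigma$ disconnect and $\bar\sigma=\emptyset$ otherwise, makes every bad simplex have empty good link (any proper coface $\tau\supset\sigma$ still disconnects, so $\bar\tau=\tau\neq\sigma$), which gives no connectivity.  Moreover, even granting some workable assignment, your description of the good link as ``a join of doubly-tethered disc complexes on the pieces'' is not correct: a double tether added in the link is a single arc in $\partial M_g$ that may cross the cutting discs and hence run through several components of $M_g\setminus\bigcup D_i$, so the link does not split as a join over the pieces and the induction does not close.

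What the paper actually does is prove Lemma \ref{m-lema} for its version of $\DTD^+$ (this is where the tools (A), (B), (C) of Diagram \ref{diagr1} are used, exactly along the lines of your second paragraph), and then pass from $\DTD^+_{k+2}$ back to $\DTD$ not by a bad-simplex argument but by a direct sphere-filling: a map $\SS^k\to\DTD$ is promoted to a simplicial map into $\DTD^+_{k+2}$ keeping track of ``monic'' subsets, extended over $\DD^{k+1}$ using Lemma \ref{m-lema}, and then pushed back into $\DTD$ simplex by simplex using the coloring lemma (Lemma \ref{coloring}).  The halving in the bound comes from this last step (one needs $k+1<g-1$ and $k\le g-k-3$), not from an inductive bad-simplex count.
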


As represented in Diagram \ref{diagr1}, the proof of Theorem \ref{Teo-dtd} cannot be directly deduced from Lemma \ref{DiscConect}; we first need to introduce an extended version of $\DTD(M_g,A,I)$. This is needed as the proof of Theorem \ref{Teo-dtd} relies on Lemma \ref{m-lema}, which we introduce in the following subsection.

\subsection{The extended doubly-tethered disc complex.} 

The \textit{extended doubly-tethered disc complex} $\DTD^+(M_g,A,I)$ is the simplicial complex whose $k$-simplices are collections of $k+1$ isotopy classes of doubly-tethered discs with a set of representatives $\{(D_i,\a_i)\}_{0\leq i\leq k}$ such that:

\begin{itemize}
    \item any two $D_i$'s are either identical or disjoint,
    \item $M_g\setminus\cup D_i$ is connected,
    \item if $D_i\neq D_j$ then $\a_i\cap D_j=\emptyset$.
    \item if $\a_i\cap \a_j\neq \emptyset$, then $\a_i \cap  T_{D_j}^k(\a_j)=\emptyset$ for some $k\in\ZZ$, where $T_{D_j}$ is the (positive) Dehn twist around $\partial D_j$.
\end{itemize}

Note that the complex $\DTD^+(M_g,A,I)$ has the same set of vertices as $\DTD(M_g,A,I)$, but we relax the condition of disjointness of the double tethers in the sense that we allow intersections between the doubly-tethered discs of a simplex as long as those intersections can be ``untwisted'': If $(D,\a)$ and $(D,\b)$ span a 1-simplex, then so does any pair $\{(D,\a),(D,T_D^k(\b))\}$ (see Figure \ref{double tethered +}). 

\begin{figure}[h]
 \includegraphics[width=10cm]{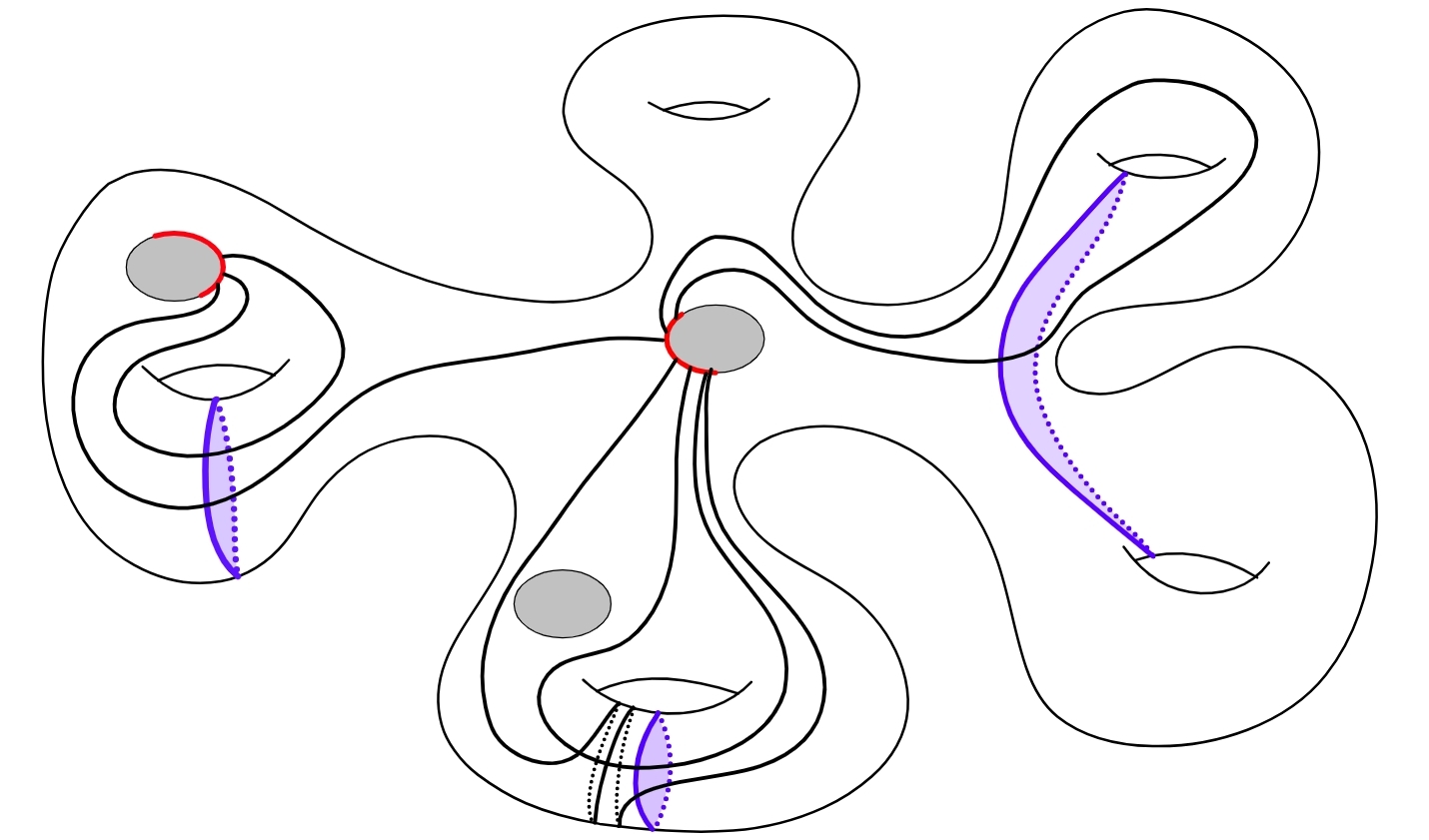}
\centering
\caption{The doubly-tethered discs in the picture form a 4-simplex. Every intersection between the double tethers happens in a regular neighborhood of a disc.}
\label{double tethered +}
\end{figure}

The proof of Theorem \ref{Teo-dtd} requires a result on $\DTD^+(M_g,A,I)$:

\begin{lem}\label{m-lema}
    $\DTD^+(M_g,A,I)_m$ is $(g-m-1)$-connected.
\end{lem}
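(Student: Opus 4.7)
The plan is to derive the lemma from the fact that $\D_c(M_g,A)$ is wCM of dimension $g-1$ (Lemma \ref{DiscConect}). The comparison uses tools (A), (B), (C) from Diagram \ref{diagr1}: the barycentric subcomplex result Theorem \ref{teo-m}, the version of Quillen's Fiber Theorem in Theorem \ref{fiber2}, and the retraction technique of Theorem \ref{teoflow}.

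First I would introduce the forgetful simplicial map
\[
p \colon \DTD^+(M_g,A,I) \longrightarrow \D_c(M_g,A), \qquad (D,\a) \longmapsto D,
\]
which is well-defined because the discs underlying any simplex of $\DTD^+$ are pairwise identical or disjoint with connected complement. The goal is that the induced map $p_m \colon \DTD^+(M_g,A,I)_m \to \D_c(M_g,A)_m$ on barycentric subcomplexes be a homotopy equivalence; by Theorem \ref{fiber2} this reduces to showing that $p_m^{-1}(v)$ is contractible for every vertex $v$ of $\D_c(M_g,A)_m$.

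Fix a vertex $v$ corresponding to a simplex $\sigma = [D_0, \ldots, D_k]$ with $k \geq m-1$. The fiber $p_m^{-1}(v)$ consists of flags of simplices of $\DTD^+$ whose underlying discs form exactly $\{D_0, \ldots, D_k\}$; these encode choices of admissible tethers on these discs. I would prove contractibility by fixing a canonical tether $\a_i^{*}$ for each $D_i$, chosen so that $\{(D_i, \a_i^{*})\}$ forms a valid simplex of $\DTD^+$, and then applying Theorem \ref{teoflow} to retract $p_m^{-1}(v)$ onto the canonical vertex. The retraction would swap non-canonical tethers for canonical ones one at a time, using a complexity (for instance, the geometric intersection number with a fixed reference system) that strictly decreases under the prescribed flow. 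The untwisting condition of $\DTD^+$, which permits tethers to cross provided they become disjoint after a Dehn twist about some $\partial D_j$, is precisely what makes each replacement move valid.

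Once contractibility of fibers is in hand, Theorem \ref{fiber2} yields $\DTD^+(M_g,A,I)_m \simeq \D_c(M_g,A)_m$, and combined with Theorem \ref{teo-m} applied to Lemma \ref{DiscConect} this gives the required $(g-m-1)$-connectedness. I expect the main obstacle to be the retraction step: constructing canonical tethers that jointly satisfy the $\DTD^+$-conditions, and engineering a complexity function that is provably monotone under the flow. It is precisely the extra flexibility of $\DTD^+$ over $\DTD$ \textemdash{} the Dehn-twist untwisting tolerance \textemdash{} that makes this contraction feasible, and that justifies working with the enlarged complex in the first place.
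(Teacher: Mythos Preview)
Your proposal is essentially the paper's approach: the forgetful map $p$, Quillen's fiber theorem on $p_m$, contractibility of each fiber via a flow governed by intersection number with a fixed reference system of tethers $\{\a_i\}$, and then Theorem~\ref{teo-m} with Lemma~\ref{DiscConect}. The one refinement worth noting is that the flow in the paper does not swap a tether $\b_j$ directly for its canonical counterpart $\a_i$ (which need not be adjacent to the remaining $\b$'s), but instead replaces $\b_j$ by the ``unicorn arc'' $\b_j'$ obtained by following $\a_i$ to its first crossing with $\b_j$ and then following $\b_j$; this two-stage process first retracts onto the subcomplex $Y$ of tethers disjoint from the $\a_i$'s outside the disc neighborhoods, and $Y$ is then retracted to the star of the canonical vertex.
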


A closely related version of $\DTD^+(M_g,A,I)$ is introduced in \cite[Subsection 8.3]{Disk}. Our method is heavily inspired by that of \cite{Disk}, but instead of requiring double tethers to be attached at a fixed point $x_0$, we allow attachment at any point in $I$. Given a simplicial complex $X$, denote by $\Star(x,X):={x}*\Lk(x,X)$ the \textit{star} of $x$ in $X$. 

\begin{proof}[\sc Proof of Lemma {\rm \ref{m-lema}}]

Define a simplicial map

$$p: \DTD^+(M_g,A,I) \rightarrow \D_c(M_g,A),$$

given by forgetting the double tethers of each disc. This induces a simplicial map $$p_m: \DTD^+(M_g,A,I)_m \rightarrow \D_c(M_g,A)_m.$$ We will apply Theorem \ref{fiber2}, more precisely, we will prove the contractibility of $p_m^{-1}(v)$ for each $v\in \D_c(M_g,A)_m$, so we can deduce that $\DTD^+(M_g,A,I)_m$ is $(g-m-1)$-connected from Lemma \ref{DiscConect} and Theorem \ref{teo-m}. 

Let $v$ a vertex in $\D_c(M_g,A)_m$, so $v=[D_0,D_1,...D_{k}]$, with $k\geq m- 1$. For each $\partial D_i$ fix a regular neighborhood $R_i$ in $\partial M_g$ so that $R_{j_1}\cap R_{j_2}=\emptyset$ if $j_1\neq j_2$ and $R_j\cap A=\emptyset$ for $ 0\leq j\leq k$. Set $\R_k:=\underset{{0\leq i\leq k}}{\cup}R_i$. Fix a double tether $\alpha_i$ for each $D_i$ so that $\a_{j_1}\cap \a_{j_2}=\emptyset$ if $j_1\neq j_2$ and such that each $\a_j$ intersects $\R_k$ in a single arc that crosses $R_j$ from one boundary of $\partial R_j$ to
the other. The set of the doubly-tethered discs $\{(D_i,\a_i)\}_{0\leq i\leq k}$ forms a vertex $\Bar{v}\in\DTD^+(M_g,A,I)_m $ with $\Bar{v}\in p_m^{-1}(v)$. Set $\A_k:=\underset{{0\leq i\leq k}}{\cup}\a_i$.

We think of any $r$-simplex $\tau\in p_m^{-1}(v)$ as a chain $[\{ \b_0,...,\b_{l_0}\}\leq\{ \b_0,...,\b_{l_0},...,\b_{l_1}\}\leq\{\b_0,...,\b_{l_1},...,\b_{l_2}\}\leq...\leq\{\b_0,...,\b_{l_{r-1}},...,\b_{l_r}\}]$ of isotopy classes of arcs with both endpoints in $I$, and each intersecting exactly one disc in $\{D_0,D_1,...D_{k}\}$ (because $\tau\in p_m^{-1}(v)$). Moreover, we choose representatives for each of the $\b_j$ that minimize the intersections with $\A_k$ outside of $\R_k$ while intersecting $\R_k$ in a single arc that crosses some $R_t$ from one boundary of $\partial R_t$ to
the other. This minimal position is unique up to isotopy.

This gives a description of $p_m^{-1}(v)$ that we will use to prove its contractibility. Let $Y$ be the subcomplex of $p_m^{-1}(v)$ spanned by vertices $\{ \b_0,...,\b_{l}\}$ such that each $\b_j$ either is disjoint from $\A_k$ outside of $\R_k$, or $\b_j=\a_i$ for some $0\leq i\leq k$.

\textit{Claim 1:} $Y$ is contractible.

To each $w=\{ \b_0,...,\b_{l}\}\in Y$ we can assign a vertex $w'=\{\b_1,...,\b_{l_w},\a_1,...,\a_k\}\in  \Star(\Bar{v},p_m^{-1}(v))$. We can retract $Y$ onto $\Star(\Bar{v},p_m^{-1}(v))$ by varying the weight from each $w$ to its corresponding $w'$ (in barycentric coordinates). As the space $\Star(\Bar{v},p_m^{-1}(v))$ is contractible, we can deduce that $Y$ is contractible

\textit{Claim 2:} There is a continuous retraction from $p_m^{-1}(v)$ onto $Y$. 

We will apply Theorem \ref{teoflow}. We first introduce the objects involved, and then check the hypotheses of the theorem. Define $n(\b_i,\a_j):=|\b_i\cap \a_j\setminus R_j|$. Define a complexity function $$c:p_m^{-1}(v)\rightarrow \ZZ,$$ $$c(w)=\underset{i,j}{\sum}n(\b_j,\a_i).$$

The complex spanned by vertices with $c(w)=0$ is $Y$. We now explain, for a given $w\in p_m^{-1}(v)\setminus Y$, how to choose $\Delta w$. Let $w=\{ \b_0,...,\b_{l}\}$. Take the first $i$ such that $\a_i$ intersects one of the $ \b_0,...,\b_{l}$ outside of the $R_i$. Such $i$ always exists, otherwise $w\in Y$. Choose the first $j$ such that $\a_i$ intersects $\b_j$ from its beginning but before $R_i$ (or from its end, if there is no such intersection). Define $\b_j'$ to be the arc going parallel to $\a_i$ from its beginning (resp. end)  until its first intersection with $\b_j$, and then going parallel to $\b_j$ and intersecting a disc (only one of the halves of $\b_j$ intersects a disc) (See Figure \ref{Unicorn}). This procedure is similar to the way to obtain \textit{unicorn arcs} in \cite[Section 3]{Unicorn}.

We choose $\Delta w$ as follows:

\begin{enumerate}
    \item If $\b_j'\notin w$, take $\Delta w=w\cup\{\b_j'\}$.
    
    \item If $\b_j'\in w$, take $\Delta w=w\setminus\{\b_j\}$.
\end{enumerate}

\begin{figure}[ht]
\includegraphics[width=7cm]{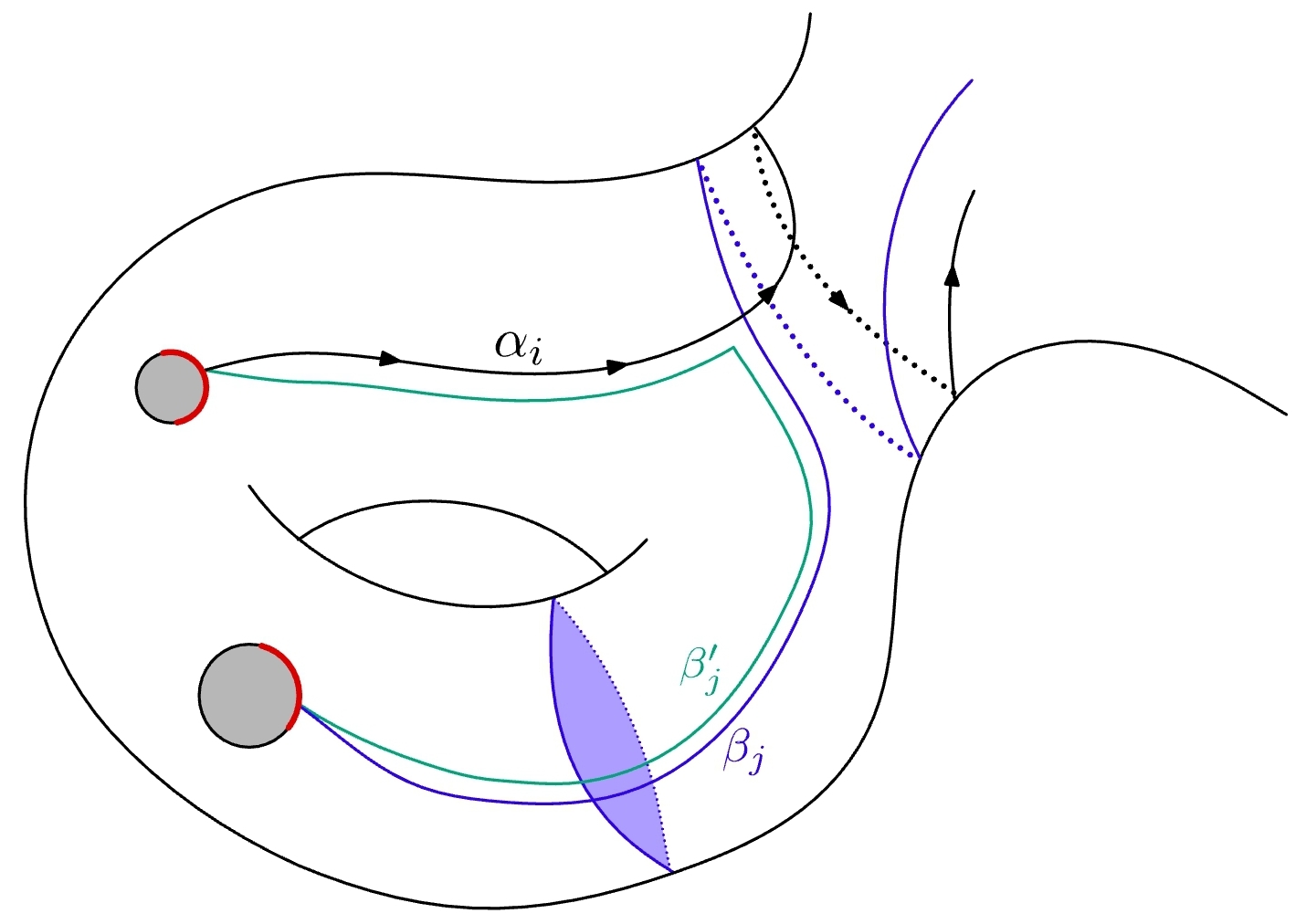}
\centering
\caption{$\b_j'$ intersects $\a_i$ at least once less than $\b_j$.  }
\label{Unicorn}
\end{figure}

Let $\tau=[\{ \b_0,...,\b_{l_0}\}\leq\{ \b_0,...,\b_{l_0},...,\b_{l_1}\}\leq...\leq\{\b_0,...,\b_{l_{r-1}},...,\b_{l_r}\}]\in p_m^{-1}(v)$. We choose $w_\tau$ as follows: Choose $\b_j\in \{\b_0,...,\b_{l_{r-1}},...,\b_{l_r}\} $ and $\b_j'$ as in the definition of $\Delta w$.

\begin{enumerate}
    \item If there is a vertex in the chain $\tau$ where $\b_j$ belongs but $\b_j'$ does not, take $w_\tau$ to be the highest such vertex in the chain $\tau$.
    
    \item If there is no such vertex in the chain $\tau$, take $w_\tau$ to be the lowest vertex in the chain containing both $\b_j$ and $\b_j'$.
\end{enumerate}

\begin{figure}[H]
\begin{center}
\begin{tabular}{ c c c c c c c c c}
 $C\cup \{\b_j\}$ &  & $C\cup \{\b_j, \b_j'\}$ &  & $C\cup \{\b_j, \b_j'\}$ &  & $C\cup \{\b_j, \b_j'\}$ & & $C\cup \{\b_j'\}$\\   
 $B\cup \{\b_j\}$ & $\rightarrow$ & $B\cup \{\b_j\}$ & $\rightarrow$ & $B\cup \{\b_j, \b_j'\}$ & $\rightarrow$ & $B\cup \{\b_j'\}$ & $\rightarrow$ & $B\cup \{\b_j'\}$\\ 
 $A$ &  & $A$ &  & $A$ &  & $A$ & &$A$ 
\end{tabular}
\end{center}
\caption{Each column represent a simplex in $p_m^{-1}(v)$ where each set of arcs is contained in the one immediately over it. $A$, $B$ and $C$ are collections of arcs not containing $\b_j$ or $\b_j'$. The flow takes the left 2-simplex to the right 2-simplex passing through several other chains of collections of  arcs.}
\label{flowfigure}
\end{figure}

This form of choosing $\Delta w$ and $w_\tau$ satisfies conditions (1) and (3) in Theorem \ref{teoflow}. After a certain finite number of steps, every arc $\b_j$ is either eliminated or substituted by $\b_j'$. So after a finite number of steps, image by $c$ of the new simplices will be strictly lower (see Figure \ref{flowfigure}), and thus condition (2) is also satisfied and Theorem \ref{teoflow} can be applied.

Because of Theorem \ref{teo-m} and Lemma \ref{DiscConect} we know $\D_c(M_g,A)_m$ is $(g-m-1)$-connected. And $\DTD^+(M_g,A,I)_m$ is $(g-m-1)$-connected because of Theorem \ref{fiber2}.

\end{proof}

\subsection{Proof of Theorem \ref{Teo-dtd}}\label{Subsec3.6}

We begin the subsection by introducing the necessary tools for the proof. Given a simplicial complex $X$, we denote by $|X|$ the topological space given by the union of all its simplices. A \textit{triangulation} of a manifold $M$ (possibly with boundary) is a simplicial complex $X$ such that $|X|=M$. The following lemma can be found in \cite[Lemma 3.1]{Disk}.

\begin{lem}\label{coloring}(Coloring lemma). Let a triangulation of $\SS^k$ be given with its vertices labeled by elements of a set $E$ having at least $k+2$ elements. Then this labeled triangulation can be extended to a labeled triangulation of $\DD^{k+1}$ whose only simplices with multiple vertices with the same label lie in $\SS^k$, and with the triangulation of $\SS^k$ as a full subcomplex. The labels on the interior vertices of $\DD^{k+1}$ can be chosen to lie in any subset $E_0\subset E$ with at least $k+2$ elements.
\end{lem}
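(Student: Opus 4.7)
I will prove the lemma by induction on $k$, via a cone-and-refine strategy. The base case $k=-1$ is vacuous: $\SS^{-1}=\emptyset$ and $\DD^0$ is a single vertex, which I label by any element of the nonempty $E_0$.

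For the inductive step, let $T$ be a labeled triangulation of $\SS^k$ and choose $e_0\in E_0$. Form the cone triangulation $C:=\{v_*\}*T$ of $\DD^{k+1}$, with the new apex $v_*$ labeled $e_0$. Then $T$ is automatically a full subcomplex of $C$, and the simplices of $C$ with repeated labels that are not contained in $\SS^k$ are precisely those of the form $v_**\sigma$ with $\sigma\in T$ containing a vertex labeled $e_0$. Let $W\subset V(T)$ denote the set of such vertices; I will process them sequentially.

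For each $w\in W$, let $L_w$ denote the link of the edge $[v_*,w]$ in the current (partially modified) triangulation. This $L_w$ is a labeled triangulation of $\SS^{k-1}$ with labels in $E$. The inductive hypothesis, applied to $L_w$ with interior label set $E_0\setminus\{e_0\}$ (which has at least $k+1\ge(k-1)+2$ elements), yields a labeled extension $F_w$ to a triangulation of $\DD^k$ with $L_w$ as a full subcomplex, interior vertices labeled in $E_0\setminus\{e_0\}$, and no repeated-label simplex outside $L_w$. I then replace the $(k+1)$-ball $v_**w*L_w$ in the current complex by $(v_**F_w)\cup_{F_w}(w*F_w)$: a re-triangulation of the same ball in which $v_*$ and $w$ are no longer joined by an edge, and all new interior vertices are labeled in $E_0\setminus\{e_0\}$, so no new bad simplex contains $v_*$.

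The main obstacle is controlling how modifications at distinct $w_1,w_2\in W$ interact: if they are adjacent in $T$, their sub-balls share simplices, so a naive simultaneous replacement is ill-defined. The sequential procedure sidesteps this by always taking $L_w$ in the \emph{current} triangulation, automatically respecting previous modifications. Each step strictly decreases the number of bad edges at $v_*$, so the process terminates after $|W|$ steps. A direct verification then confirms that the resulting triangulation of $\DD^{k+1}$ extends $T$ as a full subcomplex (interior vertices of each $F_w$ are genuinely new), uses interior labels in $E_0$ (namely $e_0$ and labels from $E_0\setminus\{e_0\}$), and has no interior simplex with repeated labels.
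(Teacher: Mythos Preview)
The paper does not prove this lemma; it simply cites \cite[Lemma~3.1]{Disk}. That said, your argument has a genuine gap: it fails whenever $T$ already contains repeated-label simplices. Your characterization of the bad simplices in the cone $C=v_**T$ as exactly those $v_**\sigma$ with $\sigma$ containing an $e_0$-vertex is incomplete---$v_**\sigma$ is also bad whenever $\sigma\in T$ itself has repeated labels, and if $\sigma\cap W=\emptyset$ your procedure never touches $v_**\sigma$. The same omission resurfaces in the replacement step: when processing $w\in W$, the half $w*F_w$ contains simplices $w*\tau$ with $\tau\in F_w\setminus L_w$; if some neighbour $w'\in L_w$ of $w$ in $T$ is also labeled $e_0$ and $\tau$ contains both $w'$ and an interior vertex of $F_w$, then $w*\tau$ has two $e_0$-vertices, lies outside $\SS^k$, and avoids $v_*$---so it is never modified by any later step (all of which only alter stars of edges through $v_*$).

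For a concrete failure, take $k=1$ and $T$ the $4$-cycle $a\text{--}b\text{--}c\text{--}d$ with labels $e_0,e_0,e_1,e_2$ and $E_0=\{e_0,e_1,e_2\}$. Running your procedure (process $a$, then $b$) leaves behind the triangle $[a,b,u]$ with labels $e_0,e_0,e_1$, where $u$ is the interior vertex introduced when processing $a$. A correct argument must account for \emph{all} repeated-label simplices of $T$, not only those involving the apex label $e_0$; your ``direct verification'' at the end does not go through.
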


Given two simplicial complexes $X$ and $Y$, we say $Y$ is a \textit{subdivision} of $X$ if $|X|=|Y|$ and every simplex $\sigma\in Y$ satisfies $\sigma\subset\tau$ for some simplex $\tau\in X$. Now we are ready to prove that $\DTD(M_g,A,I)$ is $\lfloor(g-3)/2\rfloor$-connected.

\begin{proof}[\sc Proof of Theorem {\rm \ref{Teo-dtd}}]

For any $k\leq (g-3)/2$, we will prove that any continuous map $$f:\SS^k\rightarrow 
\DTD(M_g,A,I),$$ can be extended to a continuous map $\hat{f}: \DD^{k+1}\rightarrow \DTD(M_g,A,I)$, thus proving that every $\pi_k$ for $k\leq (g-3)/2$ is trivial.

By the Simplicial Approximation Theorem (see \cite[Section 3.4]{Aprox}), we can find a triangulation $X_0$ of $\SS^k$, and a simplicial map $g:X_0\rightarrow 
\DTD(M_g,A,I)$ homotopic to $f$. The strategy now is as follows. Firstly we will promote the map $g$ to a simplicial map $h:X_1\rightarrow 
\DTD^+_{k+2}(M_g,A,I)$, where $X_1$ is some subdivision of $X_0$ yet to be defined. Because of Lemma \ref{m-lema}, this map can be extended to $\hat{h}:\hat{X_1}\rightarrow 
\DTD^+_{k+2}(M_g,A,I)$ for $\hat{X_1}$ a triangulation of $\DD^{k+1}$. Secondly, we will recover a map $\hat{g}:\hat{X_2}\rightarrow \DTD(M_g,A,I)$ with $\hat{X_2}$ a subdivision of $\hat{X_1}$ and such that $\hat{g}|_{\SS^k}$ is homotopic to $f$. This will involve the use of Lemma \ref{coloring}. The homotopy between $f$ and $\hat{g}|_{\SS^k}$ will then be used to build a continuous extension $\hat{f}$ of $f$.

\textit{Step 1: Construction of $h:X_1\rightarrow 
\DTD^+_{k+2}(M_g,A,I)$}. Let $X_0'$ be the barycentric subdivision of $X_0$. We will find a subdivision $X_1$ of $X_0'$ and a simplicial map $h:X_1\rightarrow \DTD^+_{k+2}(M_g,A,I)$ so that the following property is satisfied: 

\begin{quote}
    \textit{Property 1:} For any vertex $v$ in $X_1$ such that $v$ is in the interior of a simplex $[\sigma_0\leq\sigma_1\leq...\leq\sigma_l]\in X_0'$, $h(v)$ has $g(\sigma_0)$ as a \textit{monic} subset, that is, every doubly-tethered disc in $g(\sigma_0)$ is in $h(v)$, and each disc in $g(\sigma_0)$ is intersected by exactly one tether in $h(v)$.
\end{quote}

In other words, discs of $g(\sigma_0)$ are not \say{repeated} in $h(v)$ (see Figure \ref{monic-set}).

\begin{figure}[ht]
\includegraphics[width=10cm]{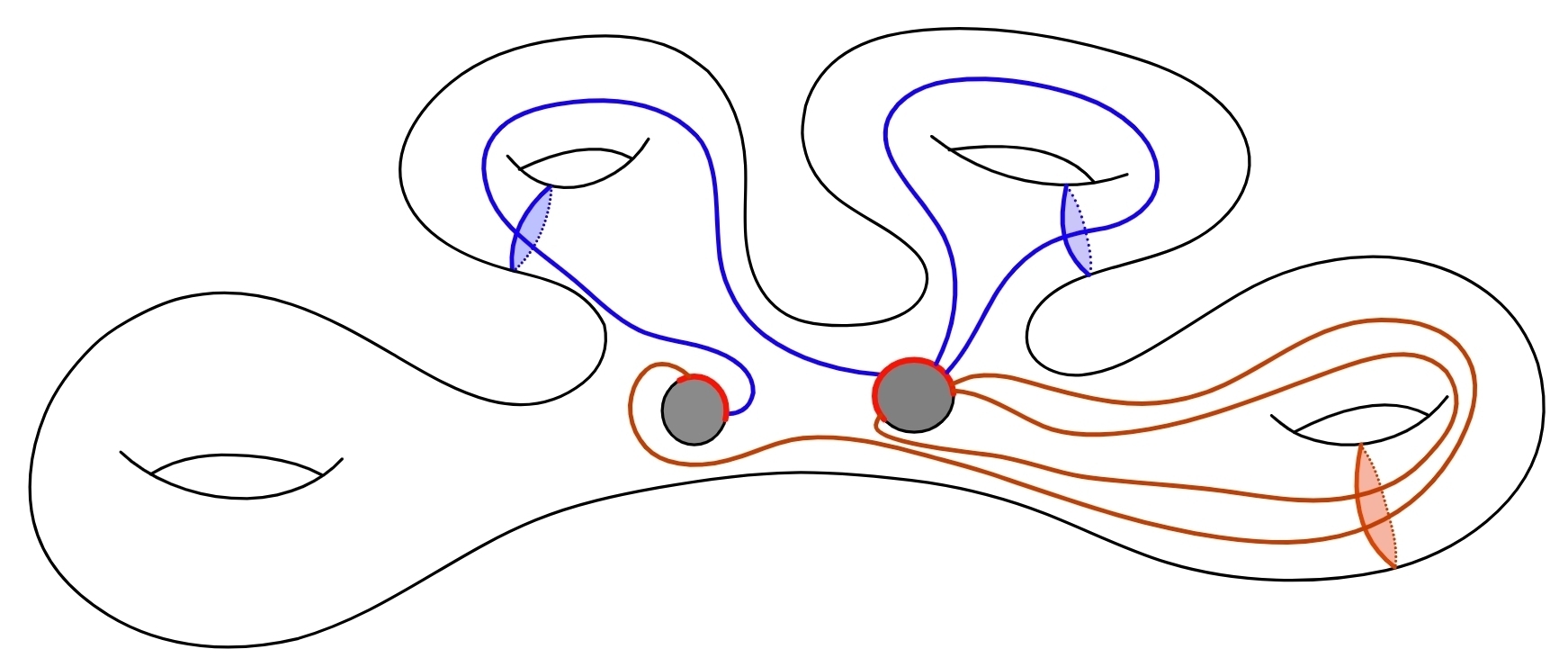}
\centering
\caption{Blue double-tethered discs are a monic subset of the simplex at display. Red are not.}
\label{monic-set}
\end{figure}

We will define $h$ with Property 1 by an inductive argument in $l$: each step defines $h$ on the $l$-skeleton of $X_0'$. We can define $h$ on the $0$-skeleton of $X_0'$: for each vertex $\sigma\in X_0$, we complete $g(\sigma)$ to a simplex $g(\sigma)^+$ of dimension $k+1$, which can be done as every simplex in $\DTD(M_g,A,I)$ can be completed to a maximal $(g-1)$-simplex, and $k+1<g-1$, and we set $h(\sigma)$ to be the vertex in $\DTD^+_{k+2}(M_g,A,I)$ corresponding to the simplex $g(\sigma)^+$. This will serve as our base case.

Now we proceed with the induction step. We define a simplicial extension of $h$ on a subdivision $X_1^l$ of the $l$-skeleton of $X_0'$, provided it has already been defined on a subdivision $X_1^{l-1}$ of its $(l-1)$-skeleton. The process ends with $X_1:=X_1^{k}$, as there are no higher dimension simplices. 

Suppose that there exists a map $h$ satisfying Property 1 defined on a subdivision $X_1^{l-1}$ of the $(l-1)$-skeleton of $X_0'$. For any $l$-simplex $\Sigma=[\sigma_0<\sigma_1<...<\sigma_l]$ of $X_0'$, since $h$ is already defined on $\partial\Sigma$, consider $h_{|\partial\Sigma}:\partial\Sigma\rightarrow \DTD^+_{k+2}(M_g,A,I)$. Property 1 is satisfied by $h_{|\partial\Sigma}$, so every vertex in $h(\partial\Sigma)$ has $g(\sigma_0)$ as a monic subset. We will now follow a surgery argument to find an extension of $h$ in a subdivision $X_1^l$ of the $l$-skeleton of $X_0'$, that still satisfies Property 1. Let $M_{g-q}$ be the spotted handlebody obtained after cutting $M_g$ along the $q$ discs in $g(\sigma_0)$, and let $A'$ be the union of $A$ and the new spots from the surgery. The map $h$ induces a map $h':\partial\Sigma\rightarrow \DTD^+_{k+2-q}(M_{g-q},A',I)$ by setting $h'(\tau)$ for each $\tau\in\partial\Sigma$ to be the set of doubly-tethered discs in $h(\tau)\setminus g(\sigma_0)$. Because of Lemma \ref{m-lema}, we know that $\DTD^+_{k+2-q}(M_{g-q},A',I)$ is $(g-k-3)$-connected. We know:  

\begin{enumerate}

\item$k\leq \frac{g-3}{2}$, hence $k\leq g-3-k$,

\item $l\leq k$, thus $l-1\leq g-3-k$,

\end{enumerate}

so we can find a simplicial extension $h':Y_{\Sigma}\rightarrow \DTD^+_{k+2-q}(M_{g-q},A',I)$ for some triangulation $Y_{\Sigma}$ of $\Sigma$ which agrees with $X_1^{l-1}$ in $\partial \Sigma$. 

We now want to recover a simplicial extension $h:Y_{\Sigma}\rightarrow \DTD^+_{k+2}(M_{g},A,I)$ of $h$. The only possible impediment is that the doubly-tethered discs in $h'(v)$ might not be disjoint from tethers in $g(\sigma_0)$. But this can be easily solved by rerouting the new tethers around the discs in $g(\sigma_0)$ (see Figure \ref{rerouting}). We always reroute the tether with the nearer intersection to the embedded disc, which does not create new intersections, so the process eventually ends. After the rerouting process, the map $h'$ can then be used to define $h$ on any simplex $\tau\in Y_\Sigma$ by setting $h(\tau)=h'(\tau)\cup g(\sigma_0)$. After a finite number of steps (as $X_0'$ is finite) we have defined $h$ on every simplex of the $l$-skeleton of $X_0'$, and $h$ is simplicial as a map $h:X_1^l\rightarrow 
\DTD^+_{k+2}(M_g,A,I)$, with $X_1^l=X_1^{l-1}\underset{l\text{-simplex. }\hspace{3pt}}{\underset{\hspace{3pt}\Sigma\in X_0'}\bigcup Y_\Sigma}$. As $h|_{Y_\Sigma}$ matches $h|_{X_1^{l-1}}$, the map $h$ is continuous. This completes the induction step.

\begin{figure}[ht]
\includegraphics[width=13cm]{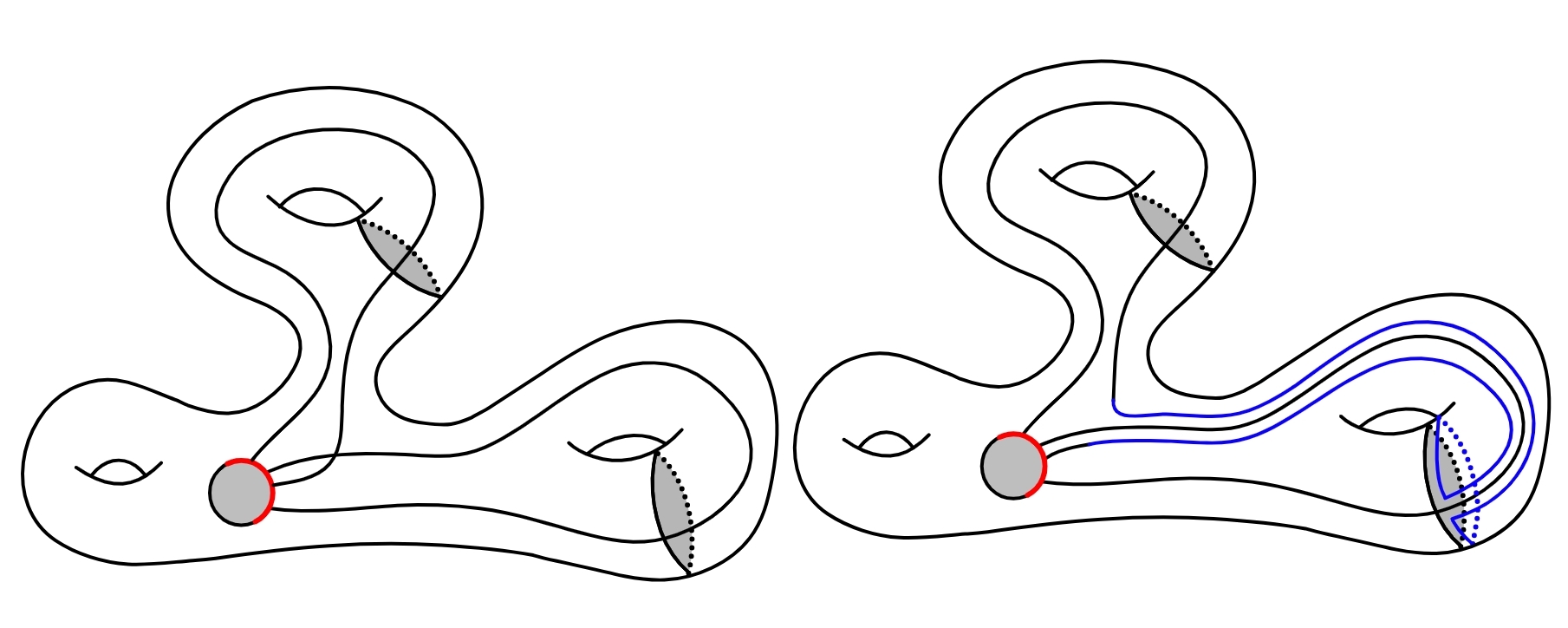}
\centering
\caption{We can avoid the intersection rerouting as shown in the right picture.}
\label{rerouting}
\end{figure}

We have successfully built a continuous simplicial map $h:X_1\rightarrow 
\DTD^+_{k+2}(M_g,A,I)$ satisfying Property 1. By Lemma \ref{m-lema}, $\DTD^+_{k+2}(M_g,A,I)$ is $(g-(k+3))$-connected, and as $k\leq g-(k+3)$, the map $h$ can be extended to a continuous simplicial map $\hat{h}:\hat{X_1}\rightarrow 
\DTD^+_{k+2}(M_g,A,I)$, where $\hat{X_1}$ is a triangulation of $\DD^{k+1}$ with $X_1$ as a full subcomplex.

\textit{Step 2: Construction of $\hat{g}:\DD^{k+1}\rightarrow 
\DTD(M_g,A,I)$.} We will find a subdivision $\hat{X_2}$ of $\hat{X_1}$ and a simplicial map $\hat{g}:\hat{X_2}\rightarrow \DTD(M_g,A,I)$ so that $\hat{g}|_{\SS^k}$ is homotopic to $f$. We first define the map $\hat{g}$ in the vertices $v\in V(\hat{X_1})$: 
  
 \begin{itemize}
     \item If $v\in X_0'$, $\hat{g}(v)$ is any doubly-tethered disc in $g(v)$.
     \item If $v\in X_1\setminus X_0'$, then $v$ lies in the interior of a simplex $[\sigma_0<\sigma_1<...<\sigma_l]\in X_0' $, we set $\hat{g}(v)=\hat{g}(\sigma_0)$.
     \item For any other $v\in \hat{X_1}\setminus X_1$, choose $\hat{g}(v)$ to be any doubly-tether disc in $\hat{h}(v)$.
 \end{itemize}

 \begin{figure}[H]
\includegraphics[width=11cm]{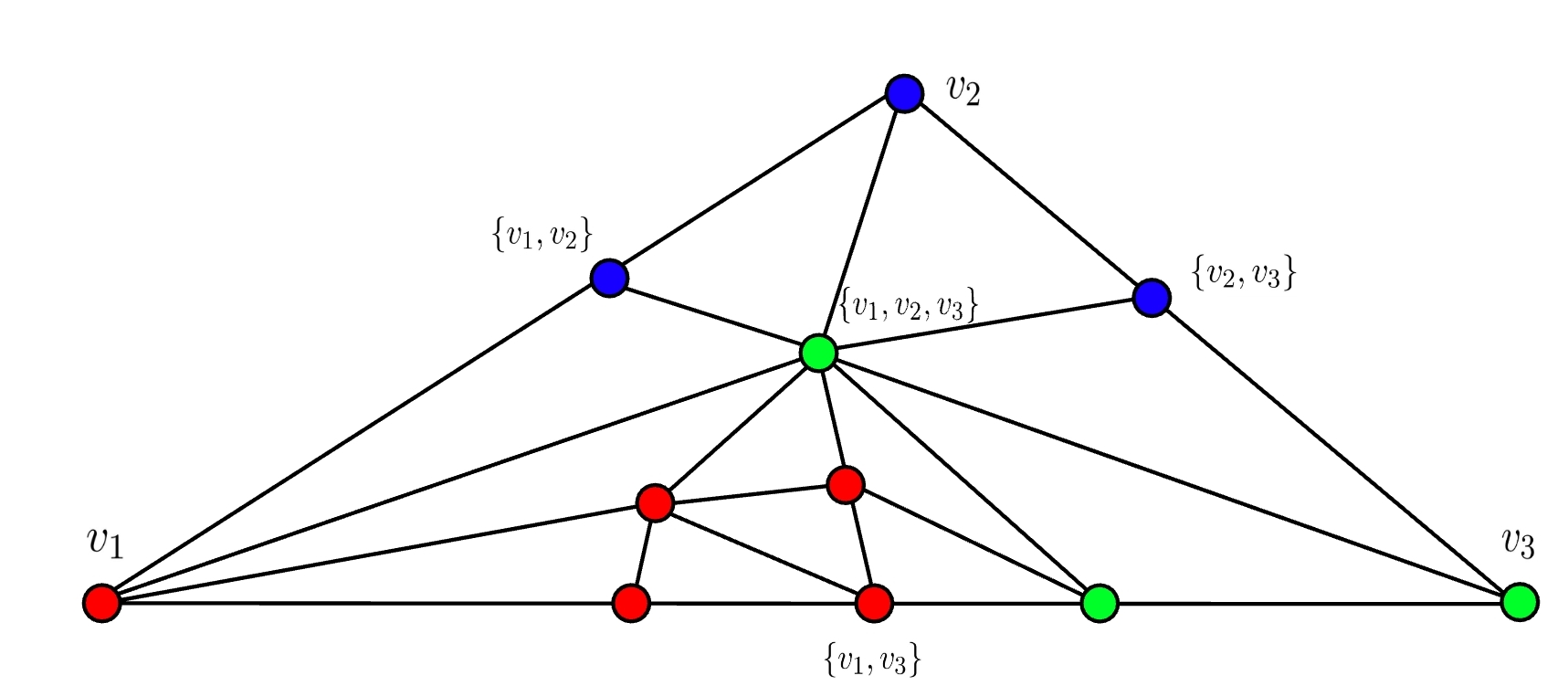}
\centering
\caption{An example of a subdivision of a simplex $\sigma\in X_0$, and a map $\hat{g}$. Colors represent different images under $\hat{g}$.}
\label{Prop2}
\end{figure}
 
The map $\hat{g}$ restricted to $V(X_1)$ can be extended linearly to $X_1$, as vertices sharing an edge have disjoint (or equal) doubly tethered discs as images. For any edge in a simplicial complex, we can define a flow that collapses it (see Figure \ref{flow}), and a sequence of these flows defines an homotopy. The map $\hat{g}|_{S_k}$ is homotopic to $g$: in each simplex $\sigma\in X_0$, we collapse the edges where $\hat{g}$ is constant while leaving the space $\sigma$ invariant, which defines an homotopy between $\Hat{g}|_\sigma$ and $g|_\sigma$.

\begin{figure}[ht]
\includegraphics[width=12cm, height=3cm]{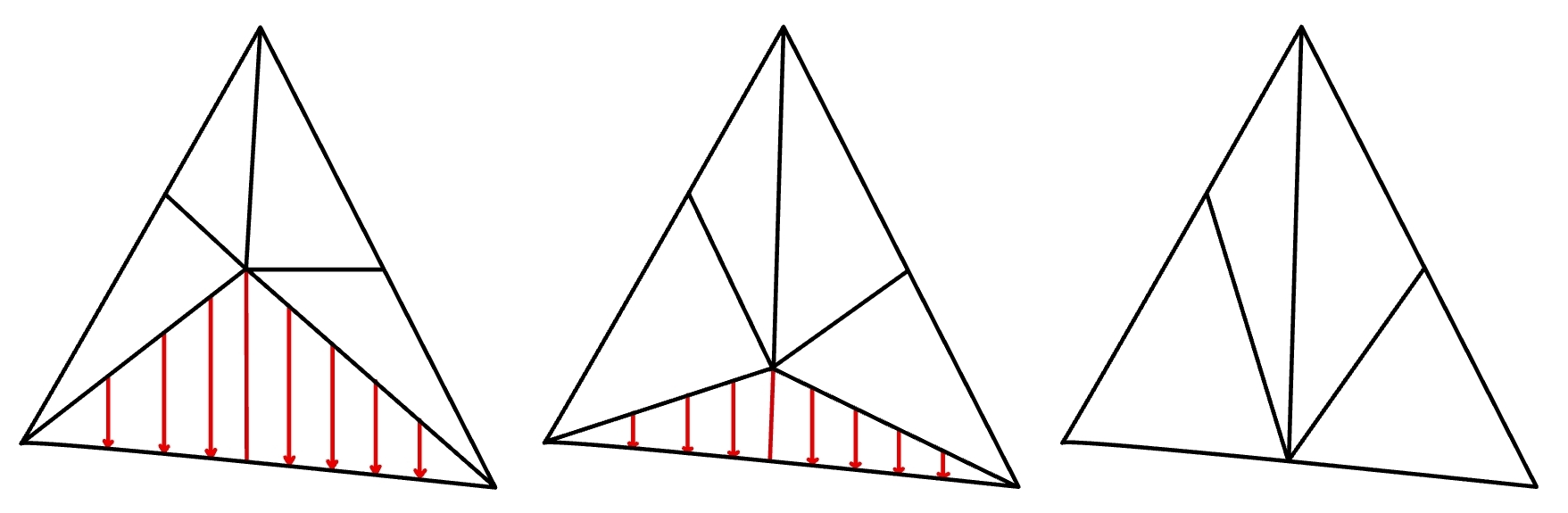}
\centering
\caption{The flow collapses the red edge.}
\label{flow}
\end{figure}

We will define the map $\hat{g}$ in $\hat{X_1}\setminus X_1$ by induction in the dimension of the simplices in $\hat{X_1}$. We already defined it in $V(\hat{X_1})$, this serves as our base case. Analogously to Step 1, we will define $\hat{g}$ to be simplicial on a subdivision $\hat{X_2}^l$ of the $l$-skeleton of $\hat{X_1}$, provided it has already been defined on a subdivision $\hat{X_2}^{l-1}$ of its $l-1$-skeleton.

Define a map $p: \DTD^+(M_g,A,I)\rightarrow \D_c(M_g,A)$ that assigns to each doubly-tethered disc its disc. For any simplex $\tau= [v_0,...,v_l]\in \hat{X_1}$ (with $\hat{h}(v_0)\leq...\leq \hat{h}(v_l)$) of dimension $l$ not contained in $\SS^K$, suppose the map $\hat{g}$ has already been defined on its faces, a collection of simplices of dimension at most $l-1$ in $\hat{X}_2^{l-1}$. Let $E:=p(\hat{h}(v_l))$ and $E_0:=p(\hat{h}(v_0))$. We apply Lemma \ref{coloring} to $\partial \tau$, so we get a triangulation $Y_\tau$ of $\tau$ which agrees with $\hat{X}_2^{l-1}$ in $\partial\tau$. All vertices of the triangulation have colors in $E$, particularly, vertices in $Y_\tau\setminus\partial\tau$ have colors in $E_0$. Any two vertices $v,w\in Y_\tau$ sharing a color satisfy $v,w\in\partial \tau$. 

We define the image $\hat{g}(v)$ of each vertex $v\in Y_\tau$ to be the doubly tethered disc associated to its disc (to its \say{color}) in $\hat{h}(v_0)$. This defines a map $\hat{g}|_{V(Y_\tau)}:V(Y_\tau)\rightarrow \DTD(M_g,A,I)$ (abusing notation). The images of any pair $v,w$ connected by an edge also share an edge, as either they have a different color, and hence their doubly-tethered discs are disjoint, or $v,w\in\hat{X}_2^{l-1}$, where $\hat{g}$ already was defined and simplicial because of the induction hypothesis. So $\hat{g}$ can be extended linearly to the whole of $Y_\tau$. After a finite number of steps we have defined $\hat{g}$ in every simplex of the $l$-skeleton of $\hat{X}_1$, and $\hat{g}$ is simplicial as a map $\hat{g}:\hat{X}_2^l\rightarrow 
\DTD(M_g,A,I)$, with $\hat{X}_2^l=\hat{X}_2^{l-1}\underset{l\text{-simplex.}\hspace{3pt}}{\underset{\hspace{3pt}\tau\in \hat{X}_1}\bigcup Y_\tau}$. This completes the induction step.

We have defined a continuous map $\hat{g}:\DD^{k+1}\rightarrow 
\DTD(M_g,A,I)$ homotopic to $f$ when restricted to $\SS^k$. Realize $\DD^{k+1}$ as the unit ball in $\RR^{k+1}$ centered in the origin, and let $\DD^{k+1}_{1/2}$ be its centered inner disc of radius $1/2$. Take the homotopy between the maps $F:\SS^k\times [1/2,1]\rightarrow \DTD(M_g,A,I) $ with $F|_{\SS^k\times\{1/2\}}=\hat{g}|_{\SS^k}$ and $F|_{\SS^k\times\{1\}}=f$, defined in $\overline{\DD^{k+1}\setminus \DD^{k+1}_{1/2}}$. We define $\hat{f}:\DD^{k+1}\rightarrow\DTD(M_g,A,I)$ as $\hat{f}:=  \begin{cases}
       \hat{g}(2\cdot) &\quad\text{in }\DD^{k+1}_{1/2}\\
       F(\cdot) &\quad\text{in }{\DD^{k+1}\setminus \DD^{k+1}_{1/2}}\\
     \end{cases} .$
This defines a continuous extension of $f$. So for $k\leq (g-3)/2$, every $k$-sphere in $\DTD(M_g,A,I)$ is null homotopic. Therefore, $\DTD(M_g,A,I)$ is $\lfloor(g-3)/2\rfloor$-connected.

\end{proof}

\subsection{The tether-curve-disc complex.}

In \cite{Tethers} Hatcher and Vogtmann prove connectivity results of different complexes of curves and arcs. Their work serves as a guide for the following subsections. Analogously to \cite[Subsection 5.3]{Tethers}, we define the \textit{tether-curve-disc complex}
$\TCD(M_g,A,I)$. 

\begin{figure}[H]
\includegraphics[width=8cm]{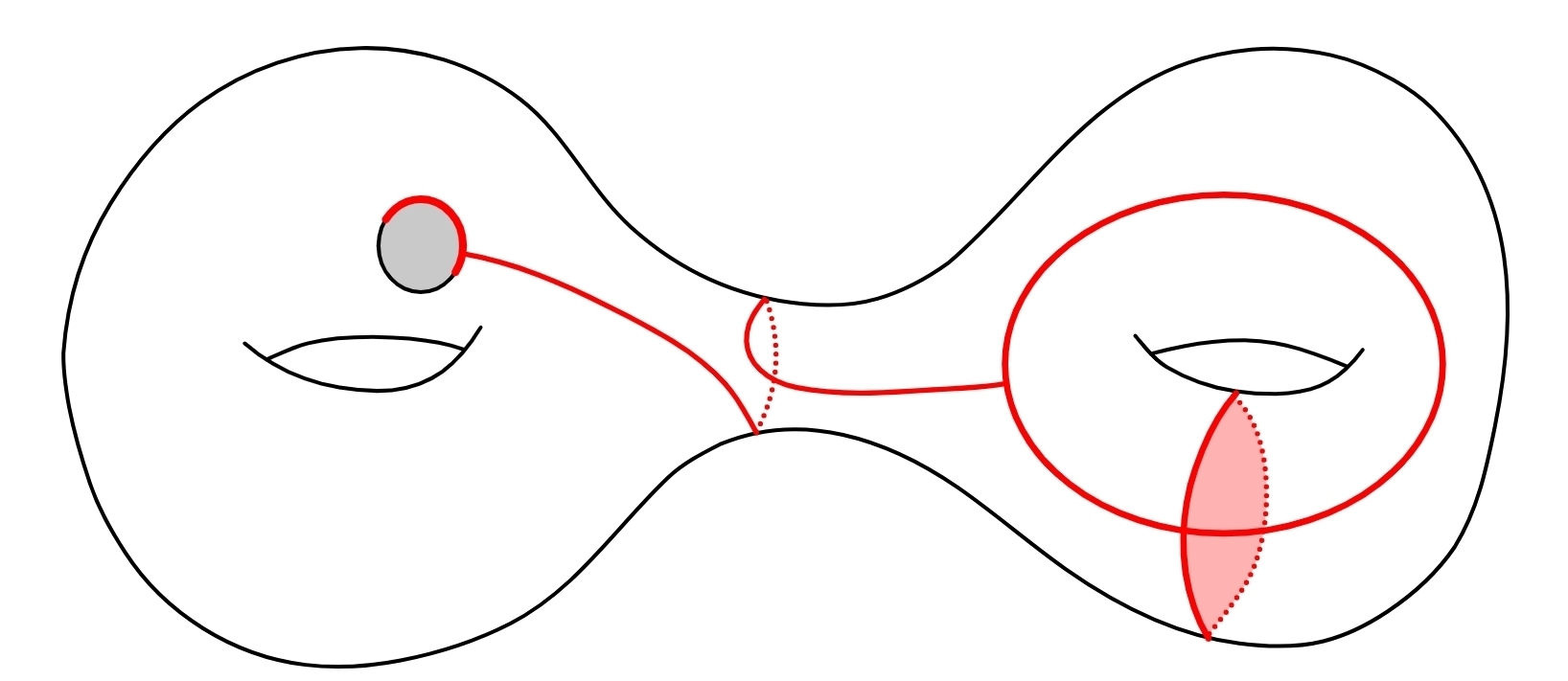}
\centering
\caption{A tether-curve-disc.}
\label{tether-curve-disc}
\end{figure}

Let $(M_g,A)$ be a spotted handlebody. A \textit{curve} $c$ in $\partial M_g$ is a continuous map $c:\SS^1\rightarrow \partial M_g$. We say an curve is \textit{simple} if $c$ is injective. We often blur the difference between a curve and its image.

Let $I$ be a collection of open intervals in $\partial A$. A \textit{tether-curve-disc} (see Figure \ref{tether-curve-disc}) is a triad $(D, c, \a)$ with:
\begin{itemize}
    \item $D$ an embedded disc, 
    \item $c$ a simple curve in $\partial M_g\setminus A$ such that $i(c,D)=1$,
    \item $\a$ a simple arc with $\int(\a)\subset\partial M_g\setminus (A\cup \partial D \cup c)$, an endpoint in $I$, and the other endpoint in $c$. 
\end{itemize}

The tether-curve-disc complex $\TCD(M_g,A,I)$ is the simplicial complex whose $k$-simplices are $k+1$ isotopy classes of tether-curve-discs with a set of pairwise disjoint representatives. Given a tether-curve-disc $(D,c,\a)$, consider a regular neighborhood $N_{\a\cup c}$ of ${\a\cup c}$ in the surface $\partial M_g$. The neighborhood $N_{\a\cup c}$ has two boundary components in $\partial M_g$, one of them, which we denote by $b$, intersecting $I$ at two points. We can define a map $$S: \TCD(M_g,A,I)\rightarrow \DTD(M_g,A,I),$$ 
by taking the arc $\b:=b\setminus \int(A)$ as a double tether for $D$, and declaring $S((D, c, \a))=(D,\b)$. This process is shown in Figure \ref{curve-to-double}. The map $S$ is simplicial, as disjointness is preserved, and injective, as every doubly-tethered disc obtained via the described surgery has exactly one preimage.

\begin{figure}[H]
\includegraphics[width=12.5cm]{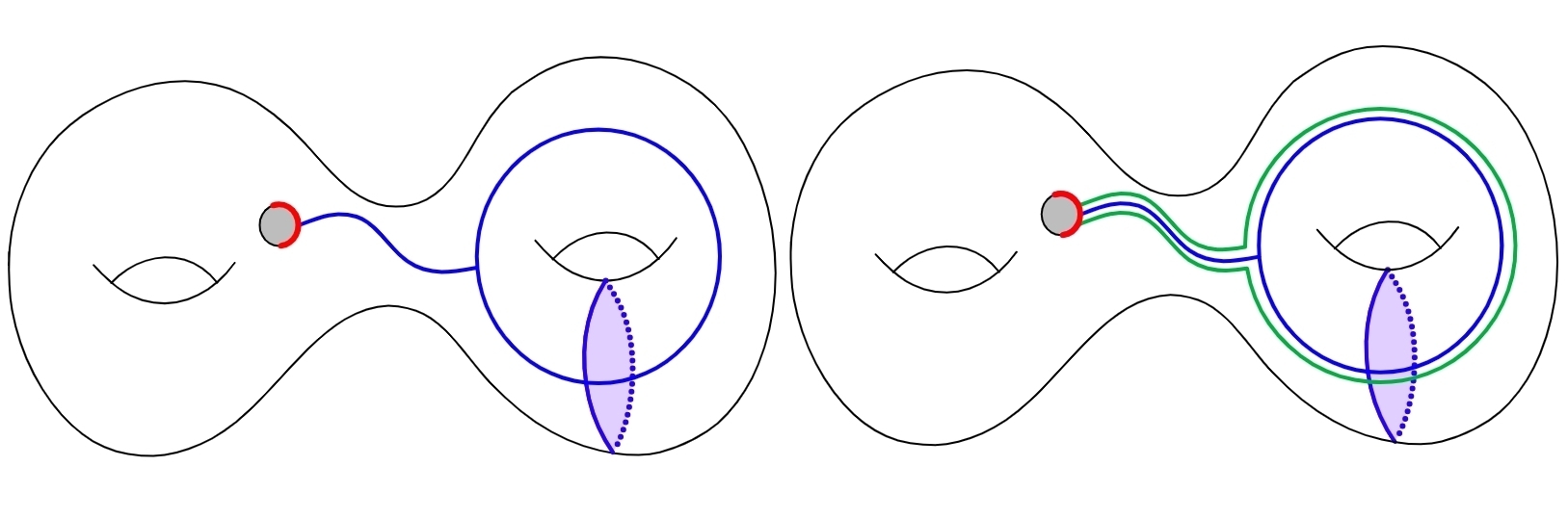}
\centering
\caption{The blue tether-curve-disc gives the green double tether.}
\label{curve-to-double}
\end{figure}

Our following result and its proof are a direct translation of \cite[Proposition 5.5]{Tethers}, hence we only offer an outline of the proof.

\begin{theor}\label{teo-tcd}
The complex $\TCD(M_g,A,I)$ is $\lfloor(g-3)/2\rfloor$-connected.
\end{theor}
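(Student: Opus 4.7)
The plan is to transfer $\lfloor(g-3)/2\rfloor$-connectivity from $\DTD(M_g,A,I)$ (Theorem \ref{Teo-dtd}) to $\TCD(M_g,A,I)$ through the injective simplicial map $S$. The strategy mirrors that of Theorem \ref{Teo-dtd}: a sphere in $\TCD(M_g,A,I)$ is pushed into $\DTD(M_g,A,I)$ via $S$, filled there using Theorem \ref{Teo-dtd}, and then the filling is lifted back.

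Concretely, given $f:\SS^k\to\TCD(M_g,A,I)$ with $k\leq\lfloor(g-3)/2\rfloor$, the Simplicial Approximation Theorem yields a simplicial representative $g:X_0\to\TCD(M_g,A,I)$ on a triangulation $X_0$ of $\SS^k$. The composition $S\circ g:X_0\to\DTD(M_g,A,I)$ is simplicial and extends, by Theorem \ref{Teo-dtd}, to $\hat{g}:\hat{X}\to\DTD(M_g,A,I)$ for some triangulation $\hat{X}$ of $\DD^{k+1}$ containing $X_0$ as a full subcomplex. I would then construct a lift $\hat{f}:\hat{X}\to\TCD(M_g,A,I)$ of $\hat{g}$ agreeing with $g$ on $X_0$ vertex by vertex: for each interior vertex $v$ with $\hat{g}(v)=(D,\b)$, recover a tether-curve-disc $(D,c,\a)$ by reversing the boundary-of-neighborhood construction defining $S$, namely by taking $c$ to be a simple closed curve meeting $\partial D$ once obtained by closing up one half of $\b$ near its intersection with $\partial D$, and $\a$ a short arc from $I$ to $c$ running parallel to the other half of $\b$.

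The main obstacle is verifying that $\hat{f}$ is simplicial, i.e.\ that within each simplex the assigned tether-curve-discs admit pairwise disjoint representatives. For two interior vertices this is automatic by constructing the pairs $(c_i,\a_i)$ in small, pairwise disjoint neighborhoods of the corresponding $\b_i$. For a pair where one vertex lies in $X_0$, one uses that disjointness of $(D,\b)=\hat{g}(v)$ from $\b_j=S(g(v_j))$ forces $\b$, up to isotopy, to lie outside a regular neighborhood of $c_j\cup\a_j$ (which is bounded in $\partial M_g$ by $\b_j$ together with an arc across the relevant spot), so $(c,\a)$ can be chosen disjoint from $(c_j,\a_j)$. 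Residual conflicts can be resolved by a subdivision of $\hat{X}$ combined with a rerouting argument in the spirit of the one used in the proof of Theorem \ref{Teo-dtd}. This produces an extension of $f$ to $\DD^{k+1}$, showing that $\pi_k(\TCD(M_g,A,I))=0$ for all $k\leq\lfloor(g-3)/2\rfloor$.
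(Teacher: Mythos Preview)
Your approach has a genuine gap at the lifting step. The construction of a tether-curve-disc from a doubly-tethered disc $(D,\b)$ only makes sense when both endpoints of $\b$ lie in the \emph{same} interval of $I$: in that case one can take the subarc $\gamma\subset I$ between them, set $c$ to be a pushoff of the closed curve $\b\cup\gamma$ (which meets $\partial D$ once), and let $\a$ be a short arc from $I$ to $c$. When the two endpoints lie in different intervals there is no such $\gamma$, and no canonical curve $c$ to take; in particular, vertices of the filling $\hat g$ need not lift at all.

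Even restricting to a single interval, the lift you describe is not simplicial. The curve $c_i$ you build lives in a neighborhood of $\b_i\cup\gamma_i$, not of $\b_i$ alone. If $(D_1,\b_1)$ and $(D_2,\b_2)$ are disjoint but their endpoint pairs on $I$ are \emph{linked} (i.e.\ $\gamma_1\cap\gamma_2\neq\emptyset$), then $c_1$ and $c_2$ necessarily intersect, so the lifted vertices do not span an edge in $\TCD$. Your ``rerouting in the spirit of Theorem~\ref{Teo-dtd}'' does not help here: that rerouting slides tethers off discs, whereas the obstruction is an essential intersection between two closed curves, which cannot be removed by a local move without changing isotopy classes. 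In fact, the simplices on which your lift fails are exactly the ``bad'' simplices that the paper's argument is designed to handle.

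The paper proceeds differently: it identifies $S(\TCD(M_g,A,I))$ as the \emph{good} subcomplex of $\DTD(M_g,A,I)$ for a suitable choice of bad vertices (those whose two tether-endpoints are separated along $I$ by endpoints of the other vertices), and applies the bad-simplex argument (Theorem~\ref{badsim}). The good link of a bad $k$-simplex is identified with $\TCD(M_{g-(k+1)},A',I')$, which is sufficiently connected by induction on the genus; this, together with Theorem~\ref{Teo-dtd}, gives the desired connectivity. A second bad-simplex argument then reduces the multi-interval case to the single-interval one. Your proposal would need an argument of comparable strength to deal with the linked simplices, and none is provided.
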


\begin{proof}[\sc Outline of the proof of Theorem {\rm \ref{teo-tcd}}] 

The proof uses Theorem \ref{badsim} to prove that $\TCD(M_g,A,I)$ is $\lfloor(g-3)/2\rfloor$-connected, by induction on the genus of $M_g$. When $g=1$, the complex is nonempty, and hence it is $(-1)$-connected. This serves as the base case. We now proceed with the induction step: assume the Theorem to be true for every $g'<g$. 

Assume first $I$ consists of only one open interval in $\partial A$. We use a bad simplex argument. Give an orientation to the interval $I$, and consequently an order to the ends of tethers attached to $I$. Given a simplex $\sigma=[(D_i,\a_i)]_{0\leq i\leq n}\in $ let $(a_i,a_i')$ be the endpoints of each $\a_i$. We set the bad vertices of $\sigma$ to be the $(D_k,\a_k)$ such that $a_k$ and $a_k'$ lie in different components of $I\setminus ( \underset{i\neq k}{\cup}\{a_i,a_i'\})$. The corresponding good complex is precisely $S(\TCD(M_g,A,I))$.

Let $\sigma$ be a bad $k$-simplex. Let $M_{g-(k+1)}$ be the spotted handlebody obtained from $M_{g}$ by cutting through the set of discs in $\sigma$. Let $A':=A\cup R$, where $R$ is a regular neighborhood of both the tethers in $\sigma$ and the new spots which come from cutting through the discs in $\sigma$, and let $I':=I\setminus \Bar{R}$. The good link of $\sigma$ can be seen as the simplicial complex $\TCD(M_{g-(k+1)},A',I')$ (see Figure \ref{pprime}), which is $\lfloor(g-k-4)/2\rfloor$-connected because of the induction hypothesis. We also have $\frac{g-k-4}{2}\geq\frac{g-3}{2}-k$ since $k\geq 1$, as a single vertex cannot be bad. So, according to Theorem \ref{badsim}, the map $S$ induces an isomorphism in $\pi_m$ for every $m\leq\frac{g-3}{2}$, and it follows that $\TCD(M_g,A,I)$ is $\lfloor(g-3)/2\rfloor$-connected.

\begin{figure}[H]
\includegraphics[width=13.5cm]{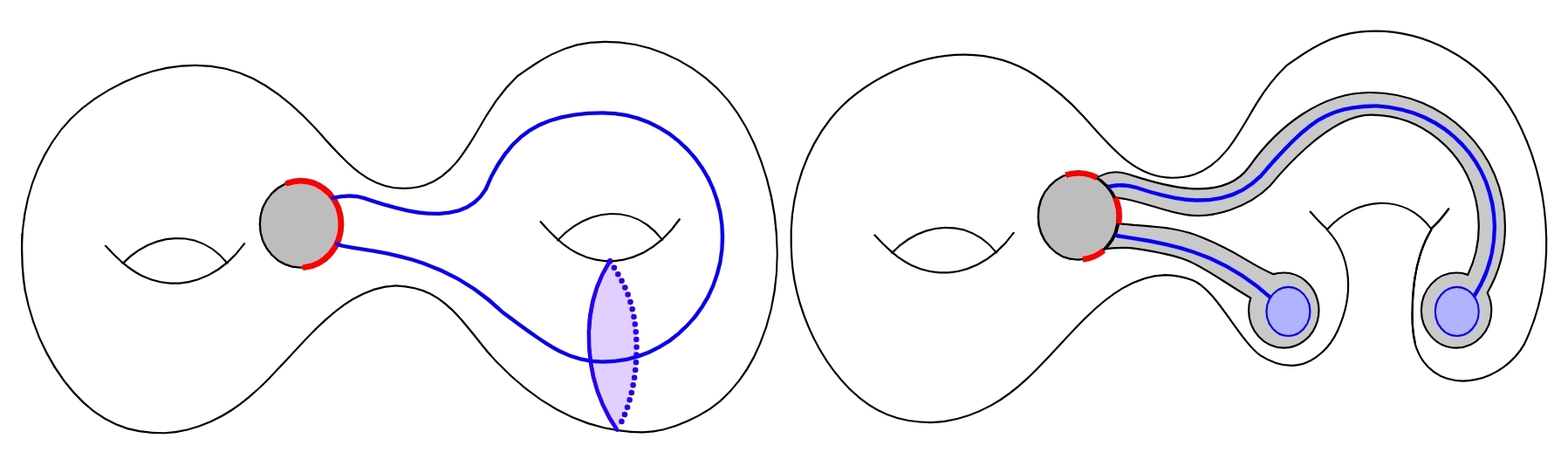}
\centering
\caption{The red segment is one of the segments in $I$ (left picture). In $I'$, it gives us three segments (right picture). All the grey neighborhood of the double tether becomes part of a spot.}
\label{pprime}
\end{figure}

For the case where $I$ consists of several open intervals in $A$ we choose a particular interval $\Bar{I}\subset I$. We have a map $\TCD(M_g,A,\Bar{I})\hookrightarrow \TCD(M_g,A,I)$ given by the inclusion of $\TCD(M_g,A,\Bar{I})$ as a subcomplex. Choosing bad vertices of a simplex to be the tether-curve-discs attaching to $I\setminus \Bar{I}$, a bad simplex argument can be applied for $m=\lfloor(g-3)/2-1\rfloor$, hence proving that $\TCD(M_g,A,I)$ is $\lfloor(g-3)/2\rfloor$-connected for any $I$.
\end{proof}

\subsection{The curve-disc complex.}
The next complex we introduce is our analog to the complex of oriented chains in \cite[Section 5.3]{Tethers}, but before, we need some notation. Every curve $c$ in an orientable surface $S$ has two \textit{sides}, that is, for $N_c$ a regular neighborhood of the curve, two connected components in $N_c \setminus c$. Giving an \textit{orientation} to a curve corresponds to designating one of these sides as \textit{positive} and the other as \textit{negative}. A \textit{curve-disc} is a pair $(D, c)$ with:
\begin{itemize}
    \item $D$ an embedded disc, 
    \item $c$ an oriented simple curve in $\partial M_g\setminus A$ such that $i(c,D)=1$.
\end{itemize}

The\textit{ curve-disc complex} $\CD(M_g,A)$ is the simplicial complex whose $k$-simplices are $k+1$ isotopy classes of curve-discs with a set of pairwise disjoint representatives. Similarly to the case of the doubly-tethered disc complex, the connectivity of $\CD(M_g,A)$ is proven by making use of a modified version of the tether-curve-disc complex. More concretely, we define $\TCD^+(M_g,A)$ to be the complex with same vertex set as $\TCD(M_g,A,I)$, but where we allow several tethers to join to each curve-disc, all attaching to the same point of $c$ and the same side of $c$. We can define a map $$p:\TCD^+(M_g,A,I)\rightarrow\CD(M_g,A)$$ by forgetting the tethers and orienting the curve so that the side the tethers attach to is positive. The following result and its proof are analogous to \cite[Proposition 5.6]{Tethers}. We offer an outline of the proof.

\begin{theor}\label{teo-cd}
The complex $\CD(M_g,A)$ is $\lfloor(g-3)/2\rfloor$-connected.
\end{theor}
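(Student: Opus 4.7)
The plan is to transfer connectivity from $\TCD^+(M_g,A,I)$ to $\CD(M_g,A)$ via Quillen's Fiber Theorem applied to the forgetful map $p$, following the strategy of \cite[Proposition 5.6]{Tethers}.

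First I would show that $\TCD^+(M_g,A,I)$ is $\lfloor(g-3)/2\rfloor$-connected. Since $\TCD(M_g,A,I) \subset \TCD^+(M_g,A,I)$ as a subcomplex (the simplices being exactly those of $\TCD^+$ whose tethers are pairwise disjoint), and since the smaller complex is $\lfloor(g-3)/2\rfloor$-connected by Theorem \ref{teo-tcd}, it suffices to show the inclusion is highly connected. I would do this by a bad simplex argument (Theorem \ref{badsim}): declare the bad vertices of a simplex of $\TCD^+$ to be those tether-curve-discs $(D,c,\a)$ for which some other vertex of the simplex shares the underlying curve-disc $(D,c)$. The resulting good complex is precisely $\TCD(M_g,A,I)$. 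For a bad $n$-simplex $\tau$, the good link is identified with a $\TCD$ complex on the handlebody obtained by cutting $M_g$ along the distinct discs involved in $\tau$ and suitably modifying the intervals $I$, so by Theorem \ref{teo-tcd} it is $\lfloor(g-n-4)/2\rfloor$-connected. Since every bad simplex has dimension $n\ge 1$, Theorem \ref{badsim} yields a $\pi_k$-isomorphism for $k\le \lfloor(g-3)/2\rfloor$, proving the connectivity claim for $\TCD^+$.

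Next I would verify that the preimage under $p$ of each closed simplex is contractible. Given $\sigma = [(D_0,c_0),\dots,(D_k,c_k)]$ in $\CD(M_g,A)$, the preimage $p^{-1}(\sigma)$ decomposes as a join $T_0 * \cdots * T_k$, where $T_i$ is the complex of tethers attached to $(D_i,c_i)$ from $I$ to a fixed common point on the positive side of $c_i$, disjoint from the remaining data. Each $T_i$ is contractible: the isotopy classes of such arcs can be deformed to a chosen canonical tether by a unicorn-type surgery argument in the spirit of the one used in the proof of Lemma \ref{m-lema}, giving a deformation retraction of $T_i$ onto a vertex. Hence $p^{-1}(\sigma)$ is contractible, and Theorem \ref{fiber} transfers the $\lfloor(g-3)/2\rfloor$-connectivity of $\TCD^+(M_g,A,I)$ to $\CD(M_g,A)$.

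The main technical obstacle is the contractibility of the tether complexes $T_i$. The shared-attachment-point condition in the definition of $\TCD^+$ is precisely what makes the join decomposition work, and the contractibility must be established either by an explicit surgery/retraction argument or by reduction to a known result on arc complexes with fixed endpoints. Beyond this, tracking how the ambient combinatorial data (the spots $A$, the intervals $I$, and the resulting cut handlebody) behaves under the surgery in the bad-simplex identification of Step 1 also requires care, though it follows the same pattern as the analogous step in the proof of Theorem \ref{teo-tcd}.
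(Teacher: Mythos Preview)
Your overall strategy matches the paper's: use the bad-simplex argument to pass from $\TCD$ to $\TCD^+$, and Quillen's fiber theorem along $p$ to pass from $\TCD^+$ to $\CD$. The paper does exactly this (in the reverse order, which is immaterial).

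There is, however, a genuine gap in your fiber analysis. You claim that $p^{-1}(\sigma)$ decomposes as a join $T_0*\cdots*T_k$ of the individual tether complexes. This is false. In the definition of $\TCD^+$, the only relaxation over $\TCD$ is that several tethers may share the \emph{same} curve-disc (at a common point on the positive side); tethers attached to \emph{different} curve-discs in a simplex are still required to be pairwise disjoint. Thus an arc in $T_i$ and an arc in $T_j$ with $i\neq j$ need not span an edge in $p^{-1}(\sigma)$, and the join decomposition does not hold. Consequently, the contractibility of each $T_i$ does not by itself give the contractibility of the fiber.

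The paper handles this by treating $p^{-1}(\sigma)$ as a single arc complex: vertices are (isotopy classes of) arcs from $I$ to the positive side of some $c_i$, and simplices are collections of pairwise disjoint such arcs. Contractibility of this complex is then proved by one global surgery/flow argument in the style of Theorem~\ref{teoflow} and the proof of Lemma~\ref{m-lema} (essentially Hatcher's argument that such arc complexes are contractible). Your unicorn-type surgery idea is the right ingredient, but it must be applied to the whole fiber at once rather than factorwise.
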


\begin{proof}[\sc Outline of the proof of Theorem {\rm \ref{teo-cd}}]
Given $\sigma\in \CD(M_g,A)$ the fiber $p^{-1}(\sigma)$ is an arc complex, with $n$-simplices corresponding to collections of $n+1$ pairwise disjoint isotopy classes of arcs with one endpoint in $I$, and the other in one of the curve-discs in $\sigma$, attached to the positive side of the curve (see Figure \ref{Multicurvedisc}). The fiber $p^{-1}(\sigma)$ can then be proven to be contractible building a flow by using Theorem \ref{teoflow} and a surgery technique similar to the proof of Lemma \ref{m-lema}.

\begin{figure}[H]
\includegraphics[width=10cm]{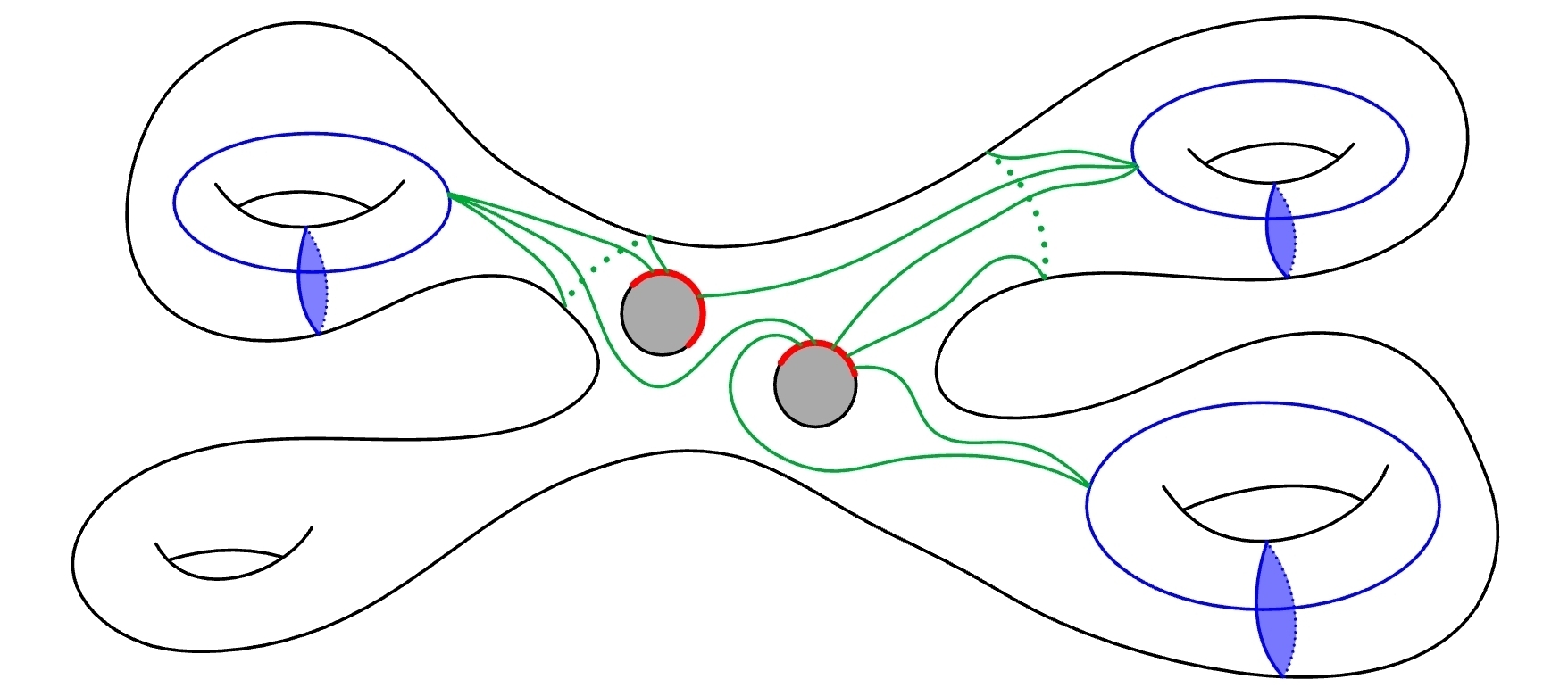}
\centering
\caption{The blue curve-discs form a 2-simplex $\sigma\in \CD(M_g,A)$. The green arcs form a 7-simplex in $p^{-1}(\sigma)$.}
\label{Multicurvedisc}
\end{figure}

Hence, according to Theorem \ref{fiber}, it suffices to check the connectivity of $\TCD^+(M_g,A,I)$. Here a bad simplex argument is used. We have a map $\TCD(M_g,A,I)\hookrightarrow\TCD^+(M_g,A,I)$ given by the inclusion of $\TCD(M_g,A,I)$ as a subcomplex. 
We define the set of bad vertices of a simplex $\sigma\in\TCD^+(M_g,A,I)$ as the set of tether-curve-discs whose curve-discs coincide with at least one other vertex in $\sigma$, and the associated good complex is precisely $\TCD(M_g,A,I)$. Because of Theorem \ref{teo-tcd}, checking the conditions of Theorem \ref{badsim} for $m=\lfloor(g-3)/2\rfloor$ finishes the proof.

\end{proof}

\subsection{The handle complex.} The rest of this section introduces the last group of simplicial complexes in Diagram \ref{diagr1}, and the corresponding results that will be used to prove the connectivity bounds for the descending links. In this last part of the section, we closely follow the reasoning in \cite[Section 9]{Asymp}. 

A \textit{handle} in $M_g$ is a genus one subhandlebody with exactly one spot. The \textit{handle complex} $\Hc(M_g,A)$, is the complex whose $k$-simplices correspond to collections of $(k+1)$ isotopy classes of handles with a set of pairwise disjoint representatives. Observe that a regular neighborhood of a curve-disc is a handle, hence we have a map $$p: \CD(M_g,A)\rightarrow \Hc(M_g,A),$$ which is surjective, as a preimage can always be found by choosing an embedded disc, and an oriented curve (see Figure \ref{curvetohandle}).

\begin{figure}[ht]
\includegraphics[width=7cm]{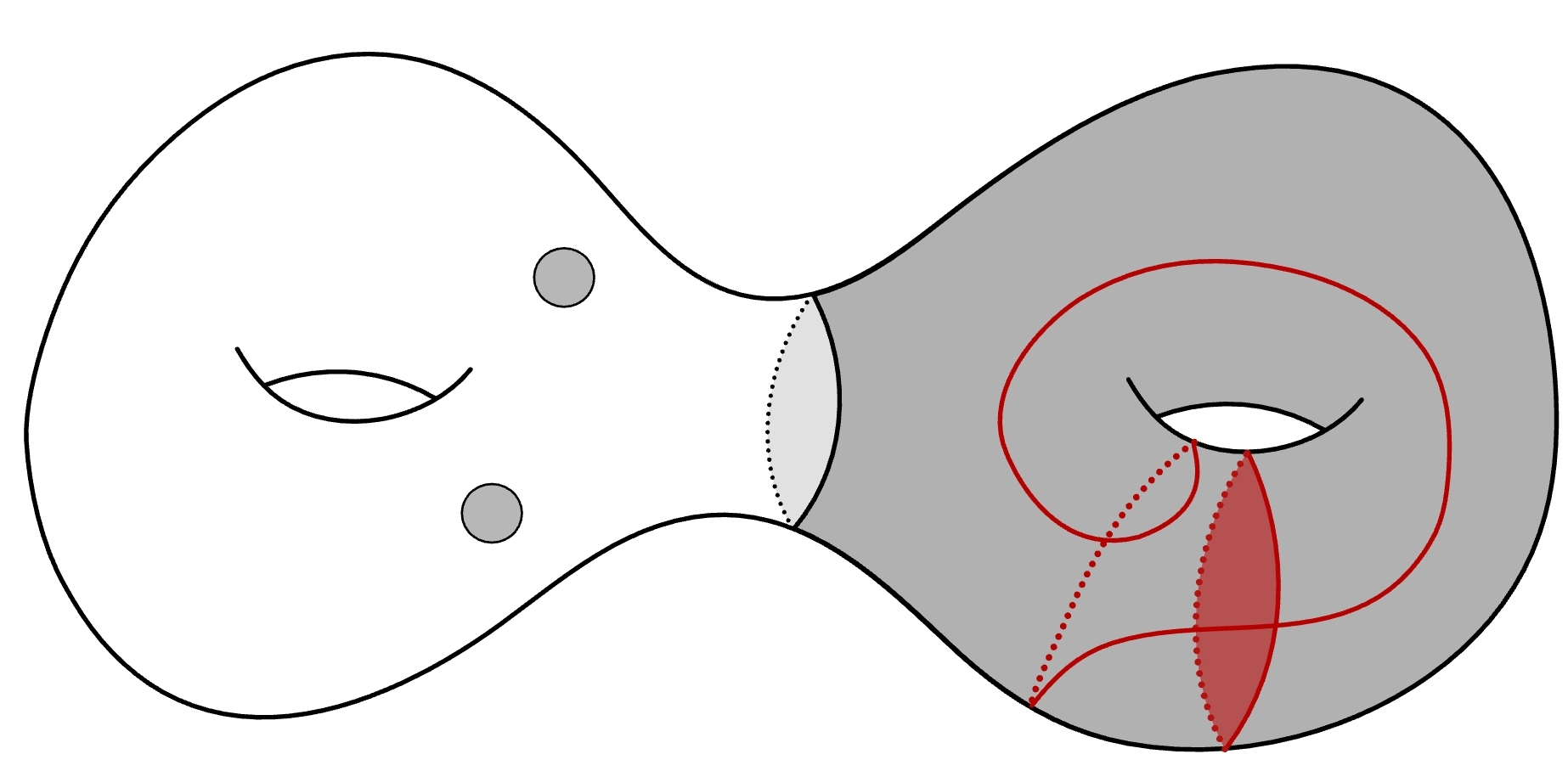}
\centering
\caption{A possible choice of preimages for a handle.}
\label{curvetohandle}
\end{figure}

\begin{lem}\label{teo-joinhandle1}
With respect to the map $p$, the complex $\CD(M_g,A)$ is a complete join over $\Hc(M_g,A)$. Hence $\Hc(M_g,A)$  is $\lfloor(g-3)/2\rfloor$-connected.
\end{lem}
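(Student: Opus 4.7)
The plan is to verify the three conditions of Definition \ref{def-join} for $p:\CD(M_g,A)\rightarrow \Hc(M_g,A)$, show that the complete join condition $Y_{x_i}(\sigma)=p^{-1}(x_i)$ holds, and then apply Theorem \ref{join2} together with Theorem \ref{teo-cd}.

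First, surjectivity of $p$ is already noted above the statement: given a handle $H$ one picks any meridian disc $D$ and any oriented simple curve $c$ in $\partial H\setminus A$ with $i(c,D)=1$, and $(D,c)$ is mapped to (the isotopy class of) a regular neighborhood of $D\cup c$, which is isotopic to $H$. For injectivity on individual simplices, suppose $\sigma=[(D_0,c_0),\ldots,(D_k,c_k)]$ is a $k$-simplex of $\CD(M_g,A)$, so that the representatives are pairwise disjoint. Then pairwise disjoint regular neighborhoods of the pairs $(D_i,c_i)$ give pairwise disjoint representatives of the handles $p(D_i,c_i)$, and two distinct disjoint handles can never be isotopic (a handle intersects itself nontrivially under any isotopy that moves it onto a disjoint handle, since a handle carries a nontrivial homology class in $M_g$). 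Hence $p(D_i,c_i)\neq p(D_j,c_j)$ for $i\neq j$, so $p$ is injective on $\sigma$.

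Next I would verify the complete join condition. Let $\sigma=[H_0,\ldots,H_k]$ be a $k$-simplex of $\Hc(M_g,A)$ with pairwise disjoint representatives, which we also call $H_0,\ldots,H_k$. Given any curve-disc $(D,c)$ with $p(D,c)=H_i$ I may isotope $(D,c)$ so that $D\cup c\subset H_i$; since the $H_j$ for $j\neq i$ are disjoint from $H_i$, such $(D,c)$ is automatically disjoint from any curve-disc supported in $H_j$. More generally, choosing one curve-disc from $p^{-1}(H_j)$ for each $j$ and isotoping each of them inside its corresponding handle gives a collection of pairwise disjoint representatives, so they span a simplex of $\CD(M_g,A)$ lying over $\sigma$. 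This shows $Y(\sigma)=p^{-1}(H_0)*\cdots*p^{-1}(H_k)$, i.e.\ $\CD(M_g,A)$ is a \emph{complete} join over $\Hc(M_g,A)$ via $p$.

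Finally, to conclude, note that $\Hc(M_g,A)$ has dimension at most $g-1$ (since a collection of pairwise disjoint non-isotopic handles contributes independent classes to $H_1(M_g)$, whose rank is $g$), and clearly $\lfloor(g-3)/2\rfloor\leq g-1$. By Theorem \ref{teo-cd}, $\CD(M_g,A)$ is $\lfloor(g-3)/2\rfloor$-connected, so the second bullet of Theorem \ref{join2} applies and yields that $\Hc(M_g,A)$ is $\lfloor(g-3)/2\rfloor$-connected, completing the proof. The only step requiring any care is the verification of completeness of the join, which rests on the ability to isotope each curve-disc into its corresponding (disjoint) handle; all other steps are formal consequences of the definitions and of the previously established theorems.
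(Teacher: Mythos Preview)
Your proof is correct and follows essentially the same route as the paper: verify surjectivity, injectivity on simplices, and the complete join condition (via isotoping each curve-disc into its handle), then invoke Theorem~\ref{join2} together with Theorem~\ref{teo-cd}. One small point: your parenthetical justification for injectivity on simplices (``a handle intersects itself nontrivially under any isotopy that moves it onto a disjoint handle, since a handle carries a nontrivial homology class'') is not a valid argument as written---an ambient isotopy never makes a submanifold intersect itself, and carrying nontrivial homology does not by itself forbid disjoint isotopic copies. The correct reason (which the paper states but also does not fully justify) is that the meridian disc of a handle is unique up to isotopy, so two curve-discs mapping to the same handle have isotopic discs; disjoint isotopic essential discs in a handlebody are parallel, and then the transversality condition $i(c_1,D_1)=1$ forces $c_1$ to meet $D_2$, contradicting disjointness.
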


\begin{proof}[\sc Proof of Lemma {\rm \ref{teo-joinhandle1}}]
The second part of the lemma is immediate once the first is proven, because of Theorem \ref{join2} and Theorem \ref{teo-cd}. We now prove the first part. The map $p$ is simplicial, as disjointness is preserved by taking regular neighborhoods, and inyective on simplices, as disjoint curve-discs never share an image.

For any given handle, the disc in the preimage is unique up to isotopy, as there is exactly one disc up to isotopy in a genus one one-spotted handlebody (see Figure \ref{curvetohandle}). On the other hand, the curve is unique up to Dehn twisting around the disc, and choosing an orientation. For any simplex $\sigma= [x_0,...,x_p]\in \Hc(M_g,A)$, each $\hat{x_i}\in p^{-1}(x_i)$ is a curve-disc entirely contained in the handle $x_i$. This implies that each set of preimages $\{\hat{x_0},...,\hat{x_P}\}$ have a set of pairwise disjoint representatives, hence $p^{-1}(\sigma)=p^{-1}(x_0)*p^{-1}(x_1)*...*p^{-1}(x_p)$. According to Definition \ref{def-join}, $\CD(M_g,A)$ is a complete join over $\Hc(M_g,A)$. 

\end{proof}

\subsection{The tethered handle complex.}

Let $(M_g,A)$ be a spotted handlebody and $B\subset A$ a subset of spots. A \textit{tethered handle} consists of a pair $(T,\a)$, with: \begin{itemize}
    \item a handle $T$,
    \item a simple arc $\a\subset\partial M_g$ with $\int(\a)\cap (A\cup T)=\emptyset$, one endpoint in $\partial B$, and the other endpoint in $\partial T$.
    \end{itemize}
The \textit{tethered handle complex} $\TH(M_g,A,B)$, is the complex whose $k$-simplices correspond to collections of $(k+1)$ isotopy classes of tethered handles with a set of pairwise disjoint representatives. As it was the case in previous subsections, we also need a modified version of the tethered handle complex: define $\TH^+(M_g,A,B)$ to be the complex with same vertex set as $\TH(M_g,A,B)$, but where we allow several tethers attached to each handle. We define a map $$p:\TH^+(M_g,A,B)\rightarrow\Hc(M_g,A)$$ by forgetting the tethers.

\begin{figure}[ht]
\includegraphics[width=7cm]{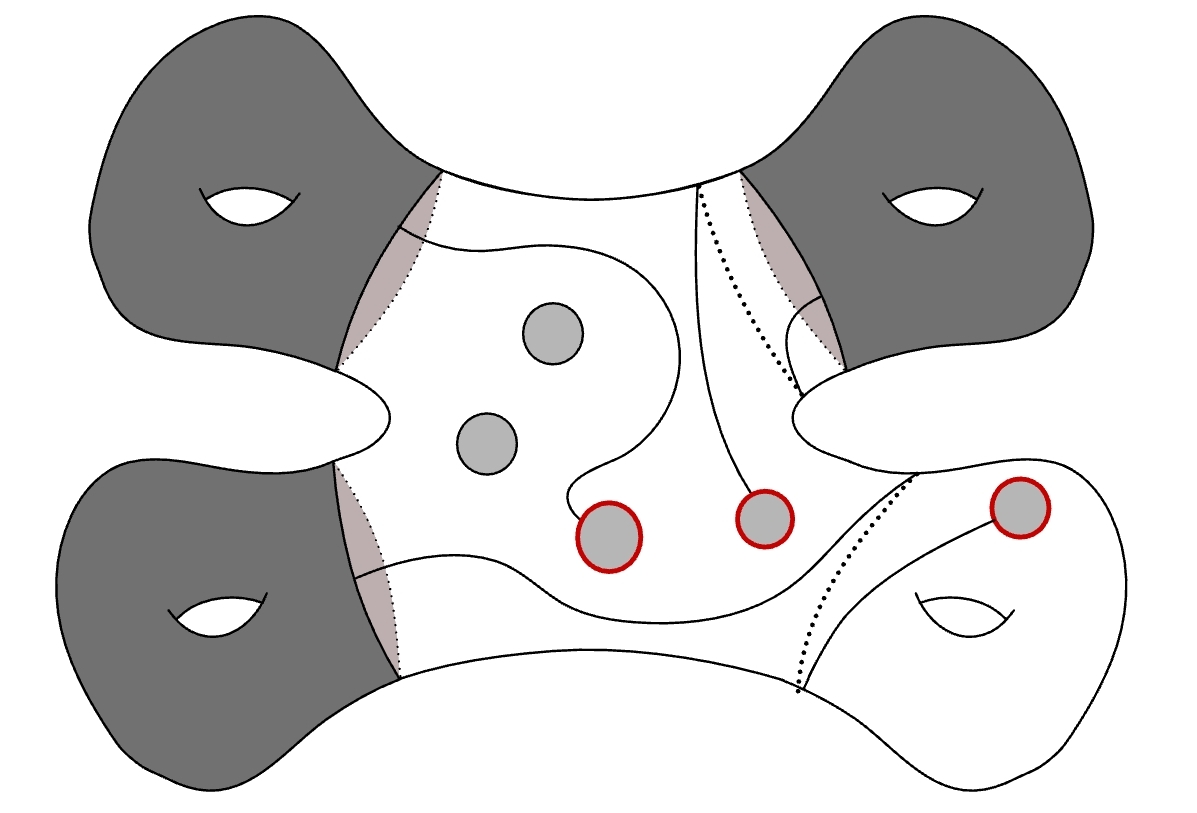}
\centering
\caption{A 2-simplex in $\TH(M_4,A,B)$.}
\end{figure}

The following lemma is analogous to \cite[Lemma 9.12]{Asymp}. We give an outline of the proof. For further details, check the proofs of \cite[Lemma 9.11, Lemma 9.12]{Asymp}.

\begin{lem}\label{teo-th}
$\TH(M_g,A,B)$ is $\lfloor(g-3)/2\rfloor$-connected.
\end{lem}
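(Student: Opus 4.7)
The plan is to follow the two-step pattern used in Theorems \ref{teo-tcd} and \ref{teo-cd}: first establish the connectivity of the auxiliary complex $\TH^+(M_g,A,B)$ via Quillen's fiber theorem over the handle complex $\Hc(M_g,A)$, then transfer this to $\TH(M_g,A,B)$ by a bad simplex argument. The two labels ``(B)(D)'' on the arrow $\TH \to \Hc$ in Diagram 1 are exactly the two tools I expect to use.

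For the first step, consider the forgetful map $p: \TH^+(M_g,A,B) \to \Hc(M_g,A)$. Given a closed $k$-simplex $\sigma=[T_0,\ldots,T_k]$ of $\Hc(M_g,A)$, the fiber $p^{-1}(\sigma)$ is the complex whose $n$-simplices are collections of $n+1$ pairwise disjoint isotopy classes of simple arcs with interior in $\partial M_g \setminus (A \cup \bigcup_i T_i)$, one endpoint on $\partial B$ and the other on some $\partial T_i$. I would show $p^{-1}(\sigma)$ is contractible by a surgery-and-flow argument formally identical to the one used for the fiber in the proof of Theorem \ref{teo-cd}: fix a reference family of tethers attaching to the $T_i$'s, and use Theorem \ref{teoflow} to flow an arbitrary configuration onto the star of that reference family via unicorn-type surgeries, with complexity measured by intersection numbers with the reference arcs. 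Combining Theorem \ref{fiber} with the connectivity bound on $\Hc(M_g,A)$ from Lemma \ref{teo-joinhandle1} yields that $\TH^+(M_g,A,B)$ is $\lfloor(g-3)/2\rfloor$-connected.

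For the second step, I apply Theorem \ref{badsim} to the inclusion $\TH(M_g,A,B)\hookrightarrow \TH^+(M_g,A,B)$, declaring the bad vertices of a simplex $\sigma$ to be those tethered handles $(T,\a)\in\sigma$ whose underlying handle $T$ coincides with the handle of some other vertex of $\sigma$. Monotonicity and idempotence are immediate, and the associated good subcomplex is exactly $\TH(M_g,A,B)$. For a bad $k$-simplex $\sigma$ (necessarily with $k\geq 1$, since a single-vertex simplex cannot be bad), let $T_0,\ldots,T_j$ be its distinct underlying handles; badness forces each to appear at least twice, so $j\leq (k-1)/2$. The good link of $\sigma$ is naturally isomorphic to $\TH(M',A',B')$, where $M'$ is the spotted handlebody of genus $g-j-1$ obtained by removing the interiors of $T_0,\ldots,T_j$ from $M_g$, $A'$ adds to $A$ the new disc spots together with a regular neighborhood of the tethers of $\sigma$, and $B'=B\setminus A'$. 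Arguing by induction on $g$, with base case $g=1$ (where $\TH(M_g,A,B)$ is nonempty and thus $(-1)$-connected), the good link is $\lfloor(g-j-4)/2\rfloor$-connected, which is at least $\lfloor(g-3)/2\rfloor-k$ since $j\leq(k-1)/2$. Theorem \ref{badsim} then delivers the required connectivity.

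The main obstacle will be the careful identification of the good link with $\TH$ on the cut-down spotted handlebody: after removing the handles and absorbing regular neighborhoods of the tethers into the spots, one must verify that the remaining intervals of $\partial B'$ are exactly those available for new tether endpoints, and that disjointness of further vertices in $\TH^+$ from $\sigma$ corresponds precisely to being a simplex in $\TH(M',A',B')$. This is directly analogous to the step in \cite[Lemma 9.12]{Asymp}, but requires extra care because tethers here terminate on the boundary of a handle rather than on a curve or an embedded disc.
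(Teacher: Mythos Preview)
Your proposal is correct and follows essentially the same two-step approach as the paper: first establish the connectivity of $\TH^+(M_g,A,B)$ via a Quillen-type fiber argument over $\Hc(M_g,A)$ (the paper uses the barycentric version Theorem~\ref{fiber2} and cites \cite[Proposition~A.11]{Asymp} for contractibility of the resulting arc complex, whereas you use Theorem~\ref{fiber} with a surgery/flow argument --- both work), and then run exactly the bad-simplex argument you describe, with the same choice of bad vertices and the same inductive bound on the good links. The only slip is the formula $B'=B\setminus A'$, which would be empty since $B\subset A\subset A'$; what you want is that $B'$ consists of the (possibly enlarged) images of the spots of $B$ after absorbing the tether neighborhoods, and you rightly flag this identification as the point requiring care.
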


\begin{proof}[\sc Outline of the proof of Lemma {\rm \ref{teo-th}}]

We will apply Theorem \ref{fiber2} in order to prove that the map $p$ is a homotopy equivalence. The barycentric fiber $p_1^{-1} (\sigma)$ of each $\sigma \in \Hc(M_g,A)$ is isomorphic to an arc complex, with vertices corresponding to collections of pairwise disjoint isotopy classes of arcs with one of its endpoints in $B$, and the other endpoint in one of the handles in $\sigma$; and at least one tether attached to each handle (see Figure \ref{Multihandle}). The arc complexes are contractible, as it is shown in \cite[Proposition A.11]{Asymp}. Hence $\TH^+(M_g,A,B)$ is $\lfloor(g-3)/2\rfloor$-connected.

\begin{figure}[h]
\includegraphics[width=10cm]{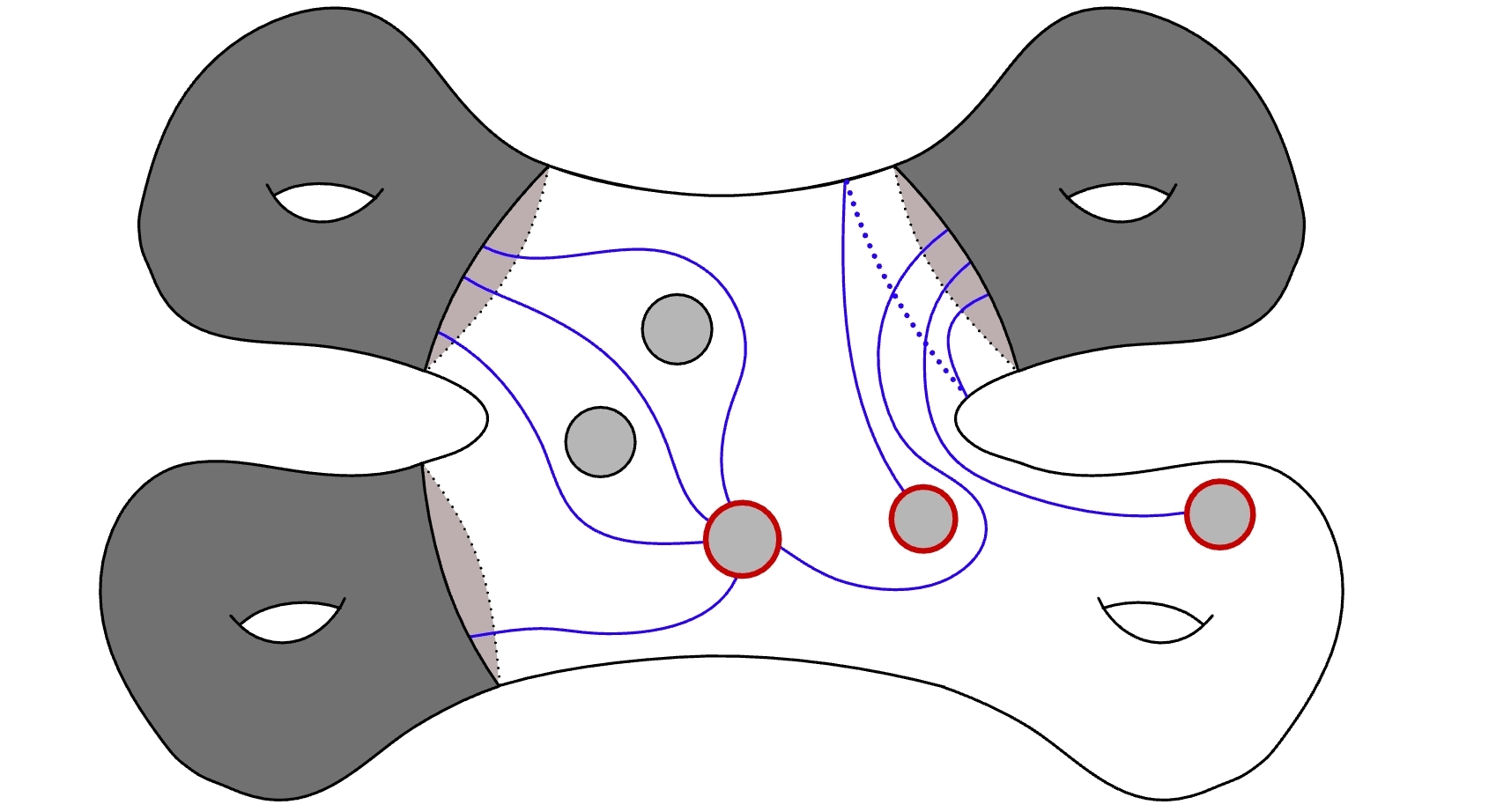}
\centering
\caption{The handles form a 2-simplex $\sigma\in \Hc(M_g,A)$. The blue arcs form a 6-simplex in $p^{-1}(\sigma)$.}
\label{Multihandle}
\end{figure}

Take the map $\TH(M_g,A,B)\hookrightarrow\TH^+(M_g,A,B)$ given by the inclusion of $\TH(M_g,A,B)$ as a subcomplex. We use a bad simplex argument. If we take the set of bad vertices of a simplex $\sigma\in\TH^+(M_g,A,B)$ to be the tethered handles whose handles coincide with at least one other vertex in $\sigma$, the good complex is $\TH(M_g,A,B)$. So checking the conditions of Theorem \ref{badsim} for $m=\lfloor(g-3)/2\rfloor$ finishes the proof.

\end{proof}

\subsection{The outer disc complex.}

The \textit{outer disc complex} $\OD(M_g,A,B)$ is the simplicial complex whose $k$-simplices correspond to collections of $k+1$ isotopy classes of \textit{outer discs}, that is, discs $b\subset\partial M_g$ such that for every spot $s \subset A$ either $s\cap b=\emptyset$ or $s\subset b$, and containing $d$ of the discs in $B$ and no other discs in $A$; with a set of pairwise disjoint representatives. The following result is stated in \cite[Lemma 9.3]{Asymp}, and is a corollary of \cite[Corollary 4.12]{Braided}.

\begin{lem}\label{lema-ball}
    Let $d\geq2$. The complex $\OD(M_g,A,B)$ is $m$-connected for $m\leq \frac{|B|+1}{2d-1}-2$. 
\end{lem}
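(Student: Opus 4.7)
The plan is to deduce the result directly from \cite[Corollary 4.12]{Braided}, following the strategy of \cite[Lemma 9.3]{Asymp}. The key observation is that an outer disc is entirely determined by its data on the boundary surface $\partial M_g$: the handlebody's interior plays no role in the definition. Hence the complex $\OD(M_g,A,B)$ is intrinsically a simplicial complex of discs on a surface with marked spots, which is precisely the type of object whose connectivity bound is given in \cite{Braided}.

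First I would make this reduction explicit. Viewing $\partial M_g$ as a closed orientable surface equipped with the $|A|$ marked discs of $A$, an outer disc corresponds to an embedded disc $b\subset \partial M_g$ bounded by a null-homotopic simple closed curve in $\partial M_g\setminus A$, containing exactly $d$ of the $|B|$ distinguished marked discs and none of the discs in $A\setminus B$. Two outer discs represent disjoint vertices in a simplex if and only if they admit disjoint representatives as subsets of $\partial M_g$, which is the same simplicial disjointness condition as in \cite{Braided}.

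Next I would verify that this identification is an isomorphism of simplicial complexes. The vertex sets coincide after the reformulation, and the simplicial structure agrees because joining two outer discs into a common $1$-simplex of $\OD(M_g,A,B)$ uses exactly the same disjointness condition as in the complex from \cite{Braided}. One only has to check that forbidden configurations (e.g., nested outer discs) correspond on both sides, which follows since outer discs of a common simplex are disjoint, hence in particular not nested.

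Finally, \cite[Corollary 4.12]{Braided} yields $m$-connectivity for $m\leq\frac{|B|+1}{2d-1}-2$, which is exactly the bound claimed. The main (and only) obstacle is ensuring the identification with the complex of \cite{Braided} is faithful at the level of simplicial structure; once this bookkeeping is done, the connectivity statement is imported verbatim with parameters $|B|$ and $d$.
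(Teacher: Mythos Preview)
Your proposal is correct and matches the paper's approach exactly: the paper does not give an independent proof but simply cites \cite[Lemma 9.3]{Asymp} and \cite[Corollary 4.12]{Braided}, relying on the same observation that outer discs live entirely on the boundary surface so the complex coincides with the surface complex treated there. If anything, you have supplied more detail than the paper does.
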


\begin{rem}
    For $d=1$, the complex $\OD(M_g,A,B)$ is a $(|B|-1)$-simplex, so it is contractible. Hence Lemma 10 remains true if $d=1$.
\end{rem}

\subsection{The extended handle-tether-disc complex.}

A \textit{handle-tether-disc} is a triple $(T,\alpha,b)$ of
\begin{itemize}
    \item a handle $T$,
    \item an outer disc $b$,
    \item a simple arc $\a\subset\partial M_g$ with $\int(\a)$ disjoint from $A$, $b$ and $\partial T$, one endpoint in $\partial b$, and the other endpoint in $\partial T$,  
\end{itemize}

The \textit{extended handle-tether-disc} complex $\HTD^{ +}(M_g,A,B)$ is the simplicial complex whose $k$-simplices are collections of $k+1$ isotopy classes of handle-tether-discs with a set of representatives that pairwise, either are disjoint, or their outer discs are the same and their tethered handle parts ($T$ and $\a$) are disjoint. We can define a simplicial map $$p:\HTD^{+}(M_g,A,B)\rightarrow\OD(M_g,A,B)$$ by forgetting the handle-tethers. The following result and its proof closely mirror \cite[Lemma 9.13]{Asymp}. Hence we only give a brief outline of the proof.

\begin{theor}\label{teo-htb1}

    Let  $m=\min\left\{\left\lfloor\frac{g-3}{2}\right\rfloor,\left\lfloor \frac{|B|+1}{2d-1}-2\right\rfloor\right\}$. Then $\HTD^{+}(M_g,A,B)$ is $m$-connected. 
\end{theor}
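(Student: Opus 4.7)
The plan is to apply Quillen's Fiber Theorem (Theorem \ref{fiber}) to the simplicial map $p:\HTD^{+}(M_g,A,B)\to\OD(M_g,A,B)$. By Lemma \ref{lema-ball}, $\OD(M_g,A,B)$ is $\left\lfloor\frac{|B|+1}{2d-1}-2\right\rfloor$-connected, hence at least $m$-connected. So it suffices to prove that for every closed simplex $\sigma=[b_0,\dots,b_k]\in\OD(M_g,A,B)$, the preimage $p^{-1}(\sigma)$ is $m$-connected; Theorem \ref{fiber} will then yield the $m$-connectivity of $\HTD^{+}(M_g,A,B)$.

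The key step is to identify $p^{-1}(\sigma)$ with a tethered handle complex on a suitably relabeled spotted handlebody. I would let $A^\sigma$ be obtained from $A$ by removing every spot of $B$ contained in some $b_i$ and adding the outer discs $b_0,\dots,b_k$ themselves as new spots, and set $B^\sigma=\{b_0,\dots,b_k\}\subset A^\sigma$. A handle-tether-disc $(T,\a,b_i)$ with $b_i\in\sigma$ carries exactly the data of a tethered handle in $\TH(M_g,A^\sigma,B^\sigma)$: the handle $T$ is disjoint from the spots of $A^\sigma$ (including all of the $b_j$'s), while $\a$ is a simple arc with interior disjoint from $A^\sigma\cup T$, with one endpoint on $\partial b_i\subset\partial B^\sigma$ and the other on $\partial T$. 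Moreover, the simplex compatibility in $\HTD^{+}$ (pairwise disjoint representatives, or else coincident outer discs with disjoint tethered-handle parts) matches the condition of pairwise disjoint representatives in $\TH(M_g,A^\sigma,B^\sigma)$, because two tethered handles in $\TH(M_g,A^\sigma,B^\sigma)$ tethered to distinct $b_i,b_j$ are automatically disjoint. This yields a simplicial isomorphism $p^{-1}(\sigma)\cong\TH(M_g,A^\sigma,B^\sigma)$.

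Since the genus of $M_g$ is unchanged by this relabeling of spots, Lemma \ref{teo-th} implies that $p^{-1}(\sigma)$ is $\left\lfloor(g-3)/2\right\rfloor$-connected, and in particular $m$-connected. Applying Theorem \ref{fiber} concludes the proof.

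The main technical point I expect is the verification of the fiber identification $p^{-1}(\sigma)\cong\TH(M_g,A^\sigma,B^\sigma)$: one must be careful to check that the two notions of disjointness for simplices match under the relabeling, and that every tethered handle on the modified spotted handlebody is realized by a handle-tether-disc whose outer disc is one of the $b_j$. Once this dictionary between handle-tether-discs and tethered handles is set up, the rest is a direct application of Quillen's Fiber Theorem, mirroring the strategy of \cite[Lemma 9.13]{Asymp}.
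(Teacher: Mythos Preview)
Your overall plan---use Quillen's Fiber Theorem for $p:\HTD^{+}(M_g,A,B)\to\OD(M_g,A,B)$ and invoke Lemma~\ref{lema-ball} and Lemma~\ref{teo-th}---matches the paper's. The gap is in your identification of the fiber.

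You assert $p^{-1}(\sigma)\cong\TH(M_g,A^\sigma,B^\sigma)$, but this is false: a vertex of $p^{-1}(\sigma)$ is a handle--tether--disc $(T,\alpha,b_i)$ with $b_i\in\sigma$, and the defining conditions only force $T$ and $\int(\alpha)$ to avoid $A$ and $b_i$. Nothing prevents $\alpha$ (or the boundary surface of $T$) from running through $b_j\setminus A$ for some other $b_j\in\sigma$. When $d\ge 2$ such intersections cannot in general be removed by isotopy rel~$A$: an arc of $\alpha$ crossing $b_j$ may separate the $d$ spots inside $b_j$ into two nonempty groups, and pushing it off would force the tether across a spot. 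Thus your proposed map into $\TH(M_g,A^\sigma,B^\sigma)$ is not defined on all vertices of $p^{-1}(\sigma)$; what you have described is only the proper subcomplex consisting of simplices whose handle--tether--discs avoid \emph{every} $b_j$ simultaneously.

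The paper handles exactly this point. It writes
\[
p^{-1}(\sigma)=\bigcup_{\tau\subseteq\sigma}\TH(M_g,A_\tau,B_\tau),\qquad A_\tau=A\cup B_\tau,
\]
observing that a simplex $\theta\in p^{-1}(\sigma)$ lies in $\TH(M_g,A_{p(\theta)},B_{p(\theta)})$ (disjointness forces avoidance only of the outer discs actually appearing in $\theta$, not of the rest of $\sigma$). This is a cover of $p^{-1}(\sigma)$ by subcomplexes indexed by the faces of $\sigma$, and its finite intersections are again tethered handle complexes $\TH(M_g,A_{\tau_1\cup\cdots\cup\tau_j},B_{\tau_1\cap\cdots\cap\tau_j})$, each $\lfloor(g-3)/2\rfloor$-connected by Lemma~\ref{teo-th}. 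A nerve covering argument (as in \cite[Lemma~9.13, Proposition~A.16]{Asymp}) then gives the required connectivity of the full fiber. Your write-up needs this extra layer; the direct identification you propose does not hold.
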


\begin{proof}[\sc Outline of the proof of Theorem {\rm \ref{teo-htb1}}]
The main idea is to apply Theorem \ref{fiber} in order to deduce a connectivity bound for $\HTD^{+}(M_g,A,B)$ from Lemma \ref{lema-ball}. For that purpose we check the connectivity of the fibers of $p$.

Given a simplex $\sigma\in \OD(M_g,A,B)$, the fiber $p^{-1}(\sigma)$ can be described as $$p^{-1}(\sigma)=\underset{\tau\subseteq\sigma}\bigcup \TH(M_g,A_\tau,B_\tau),$$ where $B_\tau$ is a set of representatives for the discs in $\tau$, and $A_\tau=A\cup B_\tau$; as any simplex $\theta\in p^{-1}(\sigma)$ belongs to $\TH(M_g,A_{p(\theta)},B_{p(\theta)})$ as a collection of tethered handles.

Under such situation, where the complex $p^{-1}(\sigma)$ is covered by the family of subcomplexes $\{ \TH(M_g,A_\tau,B_\tau)\}_{\tau\subseteq\sigma}$, we can apply a nerve covering argument (see \cite[Lemma 9.13, Proposition A.16]{Asymp}), which applied to our situation yields that $p^{-1}(\sigma)$ is as connected as are the nonempty intersections 
$$\underset{i=1}{\overset{j}{\bigcap}}\TH(M_g,A_{\tau_i},B_{\tau_i})=\TH(M_g,A_{\tau_1\cup...\cup\tau_j},B_{\tau_1\cap...\cap\tau_j}).$$

The connectivity of $\TH(M_g,A_{\tau_1\cup...\cup\tau_j},B_{\tau_1\cap...\cap\tau_j})$ when nonempty is $\lfloor(g-3)/2\rfloor$ according to Lemma \ref{teo-th} and hence, because of Lemma \ref{lema-ball} and Theorem \ref{fiber}, $\HTD^{+}(M_g,A,B)$ is $\min\left\{\left\lfloor\frac{g-3}{2}\right\rfloor,\left\lfloor \frac{|B|+1}{2d-1}-2\right\rfloor\right\}$-connected.

\end{proof}

\subsection{The handle-tether-disc complex.}

With Theorem \ref{teo-htb1} to hand, we can now prove the connectivity of the \textit{handle-tether-disc complex} $\HTD(M_g,A,B)$, a complex with the same set of vertices as $\HTD^{+}(M_g,A,B)$ but where $k$-simplices are collections of $k+1$ isotopy classes of handle-tether-discs with a set of pairwise disjoint representatives. As in other subsections, $\HTD(M_g,A,B)$ can be seen as a subcomplex of its extended version, which gives rise to an inclusion $$i: \HTD(M_g,A,B)\hookrightarrow\HTD^{+}(M_g,A,B)$$

\begin{theor}\label{teo-htb2}
Let $m=\min\left\{\left\lfloor\frac{g-3}{2}\right\rfloor,\left\lfloor \frac{|B|+1}{2d-1}-2\right\rfloor, |B|-2\right\}$. Then $\HTD(M_g,A,B)$ is $m$-connected. 
\end{theor}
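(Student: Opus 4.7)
The plan is to apply the bad simplex argument (Theorem \ref{badsim}) to the inclusion $i:\HTD(M_g,A,B)\hookrightarrow\HTD^+(M_g,A,B)$. For a simplex $\sigma$ of $\HTD^+(M_g,A,B)$, declare its bad vertices $\bar{\sigma}$ to consist of the handle-tether-discs $(T,\a,b)\in\sigma$ whose outer disc $b$ coincides with that of some other vertex of $\sigma$. Monotonicity and idempotence are immediate, and the edge condition in $\HTD^+$ forces the good complex to coincide with $\HTD(M_g,A,B)$: a simplex of $\HTD^+$ whose vertices have pairwise distinct outer discs must have pairwise disjoint handle-tether-discs.

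Next, I would describe the good link of a bad $k$-simplex $\sigma$. Let $p$ denote the number of distinct outer discs appearing in $\sigma$; since $\sigma$ is bad, each such disc occurs with multiplicity at least two, so $p\leq(k+1)/2$. Cut $M_g$ along the boundary disc of each handle in $\sigma$ and absorb the resulting new spots, together with the outer discs of $\sigma$, into new spots via regular neighborhoods of the tethers, producing a spotted handlebody $(M',A',B')$ with $g'=g-(k+1)$ and $|B'|=|B|-pd$. Using the cancellation property for handlebodies, one identifies $G_\sigma$ with $\HTD(M',A',B')$: simplices of the good link correspond precisely to collections of handle-tether-discs with pairwise distinct outer discs, all disjoint from $\sigma$.

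I would then argue by induction on $|B|$, the base cases being those where the asserted bound $m$ is negative and the statement is vacuous (in particular when $|B|<d$, the complex $\HTD(M_g,A,B)$ is empty). For the inductive step, the induction hypothesis applied to $(M',A',B')$ yields that $G_\sigma$ is $m'$-connected, with $m'$ obtained from $m$ by substituting $(g',|B'|)$ for $(g,|B|)$. A termwise comparison gives $m'\geq m-k$: the first term drops by at most $\lceil(k+1)/2\rceil\leq k$ (using $k\geq 1$ for bad simplices), the second by at most $\lceil pd/(2d-1)\rceil\leq k$, and for the third term one argues by cases. When $d=1$, $pd=p\leq(k+1)/2\leq k$ immediately gives $|B'|-2\geq(|B|-2)-k\geq m-k$. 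When $d\geq 2$, the inequality $|B|(2d-2)\geq 1$ (valid once an outer disc exists) implies $|B|-2\geq\lfloor(|B|+1)/(2d-1)\rfloor-2$, so the third term is non-binding in $m$, and the remaining estimate follows from the second-term bound together with the necessary counting constraint $|B|\geq pd$.

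Finally, combining Theorem \ref{badsim} with Theorem \ref{teo-htb1}, which guarantees that $\HTD^+(M_g,A,B)$ is at least $m$-connected (its bound dominates $m$ when $d=1$ and agrees with $m$ when $d\geq 2$), we obtain an isomorphism on $\pi_n$ for $n\leq m$, and hence $\HTD(M_g,A,B)$ is $m$-connected. The main obstacle is the surgical identification of the good link together with careful bookkeeping of the parameter drops; in particular, the third-term estimate for $d\geq 2$ requires both the inequality above and the basic counting bound $|B|\geq pd$ that is necessary for bad simplices to exist at all.
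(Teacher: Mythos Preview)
Your approach is correct and matches the paper's: both apply the bad simplex argument to the inclusion $\HTD\hookrightarrow\HTD^{+}$ with exactly the same choice of bad vertices, identify the good link of a bad $k$-simplex with a handle-tether-disc complex on the cut handlebody, and close the induction via the termwise estimate $m'\geq m-k$ (the paper inducts on the complexity $\min\{dg,|B|\}$ rather than on $|B|$, and uses the slightly coarser bound $|B'|\geq|B|-kd$ in place of your sharper $|B'|=|B|-pd$ with $p\leq(k+1)/2$, but the arguments are otherwise the same). One small caveat in your write-up: the case $m=-1$ is not vacuous but asserts nonemptiness, which you should state follows immediately since $\HTD$ and $\HTD^{+}$ share their $0$-skeleton.
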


\begin{proof}[\sc Proof of Theorem {\rm \ref{teo-htb2}}]
We will prove the result by induction in the complexity $c:=\min\left \{ dg, |B|\right\}$ of the triad $(M_g,A,B)$. We take triads with $c< 2d$, i.e. either satisfying $g= 1$, or $d \leq|B|\leq 2d$, as base cases. The theorem holds true as $m=-1$ in these cases, and the complex is nonempty. We now proceed with the induction step. 
Given $(M_g,A,B)$ of complexity $c$, suppose the theorem holds for any triad of smaller complexity than $c$. We apply a bad simplex argument (Theorem \ref{badsim}): 

Consider the complex $\HTD^{ +}(M_g,A,B)$, and take as the set of bad vertices of a simplex $\sigma$ those who share their outer disc with some other vertex in the simplex. This set satisfies the conditions of the bad simplex argument, and the good simplex for this criteria is precisely $\HTD(M_g,A,B)$. So it only remains to check that the good link of a bad $k$-simplex is $(m-k)$-connected.

The good link of a bad $k$-simplex $\sigma$ is isomorphic to the handle-tether-disc complex $\HTD(M_{g-k-1},A',B')$, where $M_{g-k-1}$ is the handlebody obtained by cutting off the handles in the handle-tether-discs of $\sigma$; $A'$ is the set of discs not in an outer disc of $\sigma$, plus discs corresponding to regular neighborhoods in $\partial M_{g-k-1}$ of each of the handle-tether-discs (notice that the neighborhoods can overlap, see Figure \ref{linkredgreen}); and $B'=B\cap A'$. The triad $(M_{g-k-1},A',B')$ has smaller complexity than $(M_g,A,B)$, hence by the induction hypothesis $\HTD(M_{g-k-1},A',B')$ is $m'$-connected with $$m'=\min\left\{ \left\lfloor\frac{(g-k-1)-3}{2}\right\rfloor, \left\lfloor\frac{|B'|+1}{2d-1}-2\right\rfloor,|B'|-2\right\}.$$ Following the next set of inequalities:

\begin{enumerate}
\item $\left \lfloor{\frac{(g-k-1)-3}{2}}\right \rfloor \overset{*}\geq \left \lfloor{\frac{g-2k-3}{2}}\right \rfloor\ = \left \lfloor{\frac{g-3}{2}}\right \rfloor -k$,

\item$ \left \lfloor{\frac{|B'|+1}{2d-1}-2}\right \rfloor \overset{**}{\geq} \left \lfloor{\frac{|B|-kd+1}{2d-1}-2}\right \rfloor \geq \left\lfloor{\frac{|B|+1}{2d-1}-2}\right \rfloor -\frac{kd}{2d-1}\geq  \left\lfloor{\frac{|B|+1}{2d-1}-2}\right \rfloor -k$,

\item $ \text{If    }  \left\lfloor\frac{|B'|+1}{2d-1}-2\right\rfloor\geq|B'|-2 \text{, then     } d=1 \text{ and }   |B'|-2 \overset{**}{\geq} |B|-kd-2= (|B|-2)-k$, \\ $*$: $k\geq 1$, as a single vertex cannot be bad, 
 \\$**$: $|B'|\geq |B|-kd$, as each outer disc contains $d$ discs in $B$, but at least two vertices share outer disc,

\end{enumerate}
we deduce that $m'\geq m-k$, hence the connectivity of $\HTD(M_{g-k-1},A',B')$ is at least $m-k$ for $m=\min\left\{\left\lfloor\frac{g-3}{2}\right\rfloor,\left\lfloor \frac{|B|+1}{2d-1}-2\right\rfloor, |B|-2\right\}$. 

According to Theorem \ref{badsim}, the inclusion $i$ induces an isomorphism in $\pi_l$ for every $l\leq m$. Together with Theorem \ref{teo-htb1}, this finishes the proof.

\end{proof}

\begin{figure}[H]
\includegraphics[width=13cm]{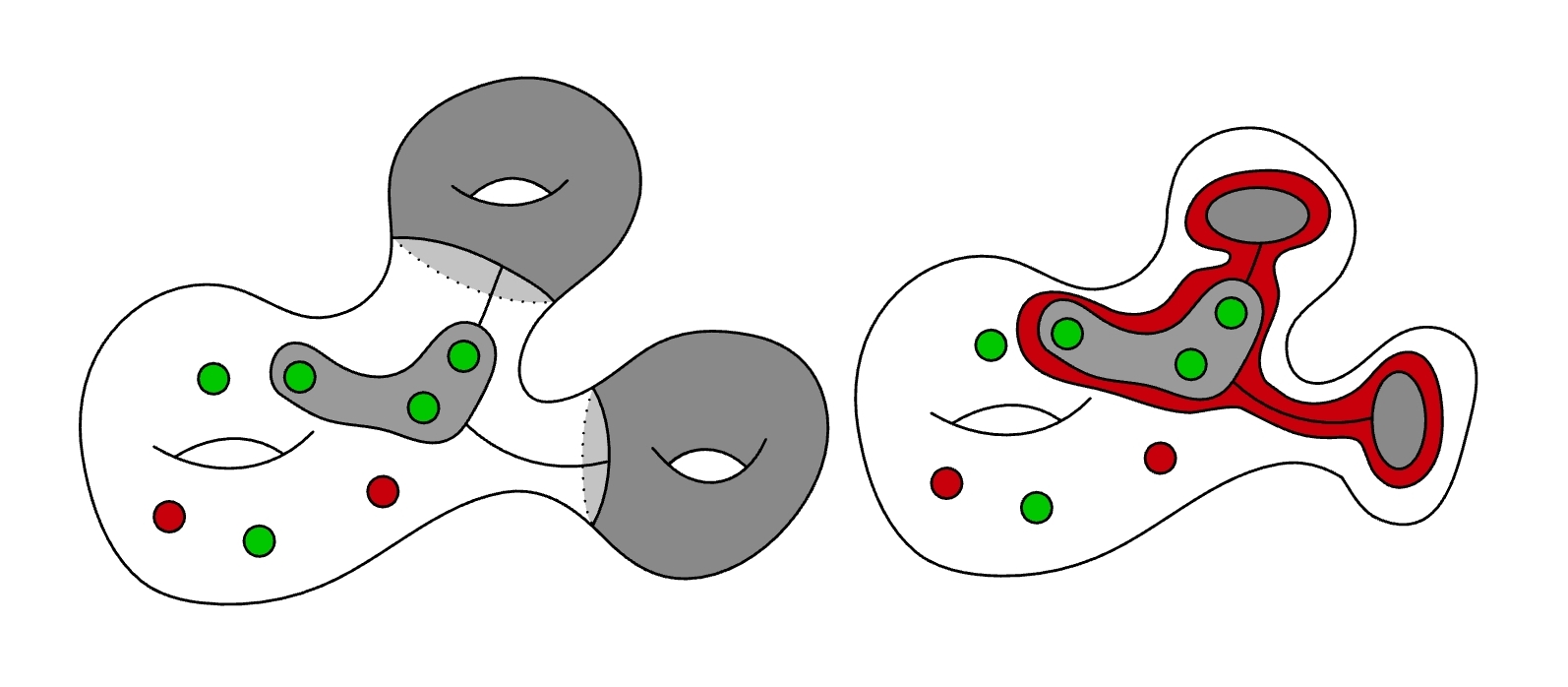}
\centering
\caption{In the left (resp. right) picture: Green spots represent spots in $B$ (resp. $B'$). Red spots represent spots in $A\setminus B$ (resp. $A'\setminus B'$).}
\label{linkredgreen}
\end{figure}

\subsection{Connectivity of the piece complex}We are ready to prove Theorem \ref{teo-piece}.

\begin{proof}[\sc Proof of Theorem {\rm \ref{teo-piece}}] 

Define a map $$p: \HTD(M_g,A,B)\rightarrow \Pc(M_g,A,B)$$ given by taking a regular neighborhood in $M_g$ of each handle-tether-disc. The map is simplicial, as disjointness is preserved. We now examine the possible choices of preimages for a given vertex. Each piece contains a unique handle up to isotopy. Similarly, there exists a single eligible outer disc up to isotopy, and the tether is also uniquely determined up to isotopy. Consequently, each piece has exactly one corresponding preimage, so the map is bijective. It follows that the map $p$ is an isomorphism, and because of Theorem \ref{teo-htb2}, the complex $\Pc(M_g,A,B)$ is $m$-connected, with $m=\min\left\{\left\lfloor\frac{g-3}{2}\right\rfloor,\left\lfloor \frac{|B|+1}{2d-1}-2\right\rfloor, |B|-2\right\}$. 

It remains to check that the link of any $k-$simplex is $m-k-1$ connected. But the link of any $k-$simplex $\sigma$ is isomorphic to $\Pc(M_{g'},A',B')$, where $M_{g'}$ is obtained from $M_g$ by cutting off a set of disjoint representatives of the pieces in $\sigma$, $A'$ are the spots of $M_{g'}$ (the ones from $A$ plus the new ones from the surgery), and $B'=B\cap A'$. The complex $\Pc(M_{g'},A',B')$ is $m'-$connected, with $m'=\min\left\{\left\lfloor\frac{g'-3}{2}\right\rfloor,\left\lfloor \frac{|B'|+1}{2d-1}-2\right\rfloor, |B'|-2\right\}$. Checking the following calculations:
\begin{enumerate}
\item $\left \lfloor{\frac{g'-3}{2}}\right \rfloor\geq\left \lfloor{\frac{(g-k-1)-3}{2}}\right \rfloor\ \geq \left \lfloor{\frac{g-2k-2-3}{2}}\right \rfloor\ \geq \left \lfloor{\frac{g-3}{2}}\right \rfloor -k-1$,

\item $ \left \lfloor{\frac{|B'|+1}{2d-1}-2}\right \rfloor \overset{*}{\geq} \left \lfloor{\frac{|B|-kd-d+1}{2d-1}-2}\right \rfloor \geq \left\lfloor{\frac{|B|+1}{2d-1}-2}\right \rfloor -\frac{kd+d}{2d-1}\geq \left\lfloor{\frac{|B|+1}{2d-1}-2}\right \rfloor -k-1$, 

\item $\text{If    }  \left\lfloor\frac{|B'|+1}{2d-1}-2\right\rfloor\geq|B'|-2 \text{, then     } d=1 \text{ and } |B'|-2 \overset{*}{\geq} |B|-kd-d-2= (|B|-2)-k-1$, \\ $*$: $|B'|\geq |B|-(k+1)d$, as each piece contains $d$ discs in $B$,

\end{enumerate}

we can see that $m'\geq m-k-1$, hence $\Pc(M_g,A,B)$ is wCM of dimension $m+1$. For $A=B$, we get the bound in the statement of the theorem.
\end{proof}

\section{Proof of Theorem I}\label{Sect3.5}

We are finally in a position to prove Theorem \ref{Main1}. Let $v$ be a vertex, which (up to the action of $\H$) we can write as $v=[(M,\id)]$, where $M$ is a spotted subhandlebody of genus $g$, with a set $A$ of spots. For $v$ of height $h(v)=k$, we have that $|A|=r+k(d-1)$ and $g=g_o+k$, where $g_o$ is the genus of $O$. Note that, since $d\geq 2$, both $|A|$ and $g$ tend to infinity as $k$ does. In particular, together with Corollary \ref{cor-lk}, this proves the following corollary, which in turn finishes the prove of Theorem \ref{Main1}.

\begin{cor}\label{TheorI}
If $d\geq 2$ and $r\geq 1$, $\dl(v,\Xc)$ satisfies Condition (4) in Theorem \ref{brown1}: for every $l\geq 1$, there exists a sufficiently large critical value $s$ such that for every vertex $v\in \Xc$ with $h(v)\geq s$, $\dl(v,\Xc)$ is $(l-1)$-connected.
\end{cor}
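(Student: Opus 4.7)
The plan is to derive Corollary \ref{TheorI} as an immediate consequence of the connectivity bound in Corollary \ref{cor-lk}, together with the arithmetic observation that, once $d\geq 2$, both the genus $g$ and the number of spots $|A|$ of a defining suited handlebody $M$ grow linearly with the height $h(v)=k$.

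Concretely, I would argue as follows. Up to the $\H$-action one may assume $v=[(M,\id)]$, and since the Morse function $h$ is $\H$-invariant, the pair $(g,|A|)$ associated to any representative of a vertex of height $k$ is completely determined by $k$: each piece added to the previous stage contributes $+1$ to the genus and $+(d-1)$ to the number of spots, so $g = g_o + k$ and $|A|= r + k(d-1)$. By Corollary \ref{cor-lk}, $\dl(v,\Xc)$ is $m$-connected, where
\[
m=\min\left\{\left\lfloor\tfrac{g-3}{2}\right\rfloor,\ \left\lfloor\tfrac{|A|+1}{2d-1}-2\right\rfloor,\ |A|-2\right\}.
\]

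Given $l\geq 1$, I would choose $s$ to be any integer at least as large as each of the three explicit thresholds on $k$ that force the three quantities above to be $\geq l-1$, namely
\[
k\geq 2l+1-g_o,\qquad k(d-1)\geq (l+1)(2d-1)-1-r,\qquad k(d-1)\geq l+1-r.
\]
Since $d\geq 2$ implies $d-1\geq 1$, each of these thresholds is a finite number, so such an $s$ exists. For every vertex $v\in\Xc$ with $h(v)\geq s$ the value of $m$ is then at least $l-1$, and hence $\dl(v,\Xc)$ is $(l-1)$-connected. This verifies condition (4) of Theorem \ref{brown1} for the chosen $l$.

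There is no real obstacle here, as all the work is packaged into Corollary \ref{cor-lk}; the only point to be careful about is that the critical values of $h$ are non-negative integers, so $s$ can (and should) be taken as an integer, and that the bounds $g=g_o+k$ and $|A|=r+k(d-1)$ depend only on the height of $v$, not on the particular representative, so the connectivity estimate applies uniformly to all vertices of height $\geq s$.
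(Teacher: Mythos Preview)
Your argument is correct and is essentially the same as the paper's: you invoke Corollary~\ref{cor-lk}, use the formulas $g=g_o+k$ and $|A|=r+k(d-1)$, and observe that for $d\geq 2$ both quantities tend to infinity with $k$, so $m\to\infty$ and condition~(4) holds for every $l$. The only difference is that you spell out explicit thresholds on $k$, whereas the paper simply notes that $g$ and $|A|$ go to infinity.
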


\section{Proof of Theorem II}\label{Sect4}

We are now in position to prove Theorem \ref{Main2}. As in the previous section, let $v=[(M,\id)]$ be a vertex, and $M$ a spotted subhandlebody of genus $g$, with a a set $A$ of spots. The genus $g=g_o+k$ tends to infinity as $k$ does, but since $d=1$, we have that $|A|=r$ $\forall k$. Together with Corollary \ref{cor-lk}, this proves the following corollary. 

\begin{cor}\label{Cond(4)}
If $d=1$ and $r\geq 1$, $\dl(v,\Xc)$ satisfies Condition (4) in Theorem \ref{brown1} for $l=r-1$: for a sufficiently large critical value $s$ and for every vertex $v\in \Xc$ with $h(v)\geq s$, $\dl(v,\Xc)$ is $(r-2)$-connected.
\end{cor}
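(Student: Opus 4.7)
The plan is to deduce this directly from Corollary \ref{cor-lk} by substituting the parameters specific to the star case and analyzing the minimum. Recall that Corollary \ref{cor-lk} gives that $\dl(v,\Xc)$ is $m$-connected with
$$m=\min\left\{\left\lfloor\frac{g-3}{2}\right\rfloor,\left\lfloor \frac{|A|+1}{2d-1}-2\right\rfloor, |A|-2\right\}.$$
So the task reduces to evaluating this minimum in the regime $d=1$, $|A|=r$, and $g=g_o+k$ with $k=h(v)$.

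First, I would substitute $d=1$ into the second term. Since $2d-1=1$ when $d=1$, the second term simplifies to $\lfloor(|A|+1)-2\rfloor=|A|-1=r-1$. The third term is $|A|-2=r-2$, which is strictly smaller than $r-1$, so the second term is redundant in this case and the minimum reduces to
$$m=\min\left\{\left\lfloor\frac{g_o+k-3}{2}\right\rfloor,\, r-2\right\}.$$

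Next, since $\left\lfloor(g_o+k-3)/2\right\rfloor$ is a nondecreasing function of $k$ that tends to infinity, I can choose a critical value $s$ large enough so that whenever $h(v)=k\geq s$ we have $\left\lfloor(g_o+k-3)/2\right\rfloor\geq r-2$; concretely, any $s\geq 2r-1-g_o$ suffices. For such $v$, the minimum equals $r-2$, giving that $\dl(v,\Xc)$ is $(r-2)$-connected, which is precisely condition (4) of Theorem \ref{brown1} with $l=r-1$.

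There is essentially no obstacle here, as the result is a direct arithmetic consequence of Corollary \ref{cor-lk} combined with the fact that in the star case the number of spots $|A|=r$ remains constant while the genus grows linearly in $k$. All the substantive work has been carried out earlier in Section \ref{Sect3}.
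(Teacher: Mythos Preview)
Your proposal is correct and follows essentially the same approach as the paper: the paper simply notes that $|A|=r$ stays constant while $g=g_o+k$ tends to infinity, and invokes Corollary~\ref{cor-lk}. You carry out the same argument with the arithmetic spelled out explicitly, which is fine.
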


Hence the action of  $\H_{1,r}(O,Y)$ on $\Xc_{1,r}(O,Y)$ satisfies every hypothesis of Theorem \ref{brown1} for $l=r-1$. We now check that the action satisfies Condition (5) in Theorem \ref{brown2}.

\begin{lem}\label{lem(5)}
    The action of $\H_{1,r}(O,Y)$ on $\T_{1,r}(O,Y)$ satisfies Condition (5) in Theorem \ref{brown2} for $l=r-1$: for a sufficiently large critical value $s$ and for every vertex $v\in \Xc$ with $h(v)\geq s$, $\dl(v,\Xc)$ is of dimension $r-1$.
\end{lem}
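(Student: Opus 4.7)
The plan is to compute $\dim\dl(v,\Xc)$ by transferring the question to the piece complex $\Pc(M_g,A)$ through the complete join structure of Proposition~\ref{teo-linkpiece}, and then to control the dimension of that piece complex by a spot-counting argument together with a standard handlebody decomposition.

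First, I will prove the general observation that if $p\colon Y\to X$ is a complete join of simplicial complexes, then $\dim Y=\dim X$. Since $p$ is simplicial and injective on each individual simplex, the image of a $q$-simplex of $Y$ is a $q$-simplex of $X$, giving $\dim Y\leq\dim X$; conversely, for each $q$-simplex $\sigma=[x_0,\dots,x_q]$ of $X$ the subcomplex $Y(\sigma)=Y_{x_0}(\sigma)*\dots*Y_{x_q}(\sigma)$ is a join of $q+1$ non-empty vertex fibers, hence $q$-dimensional. Applied to $\Pi\colon\dl(v,\Xc)\to\Pc(M_g,A)$, this reduces the problem to showing $\dim\Pc(M_g,A)=r-1$ for $h(v)$ sufficiently large.

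Second, I will establish the upper bound $\dim\Pc(M_g,A)\leq r-1$. By the definition of the piece complex, each vertex of $\Pc(M_g,A)$ is an isotopy class of spotted subhandlebody homeomorphic to $Y^1$, whose two spots are a properly embedded disc and (since $d=1$) a spot of $A$. Since the vertices of a simplex admit pairwise disjoint representatives, the $A$-spots they carry are pairwise distinct, and the equality $|A|=r$ forces a simplex to have at most $r$ vertices.

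Third, I will show that this bound is attained once $h(v)$ is large. For $h(v)=k$ the representative $M_g$ has genus $g=g_o+k$, which is at least $r$ as soon as $k\geq r-g_o$; taking $s\geq\max\{1,r-g_o\}$ ensures this regime, and since $g_o\geq 1$ one has $s\leq r-1$. I will then use a standard decomposition $M_g\cong\DD^3\natural(\SS^1\times\DD^2)^{\natural g}$ and isotope the elements of $A$ so that each spot sits on the boundary of a distinct solid-torus summand $T_i$. Each $T_i$, equipped with the disc separating it from the rest of $M_g$ and with the spot of $A$ it carries, then represents a vertex of $\Pc(M_g,A)$, and the pairwise disjoint $T_1,\dots,T_r$ jointly span an $(r-1)$-simplex.

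The step I expect to require the most care is the dimension statement for complete joins, since the vertex fibers $Y_{x_i}(\sigma)$ can be very large and one has to check that these extra vertices do not produce simplices of $Y$ of dimension strictly exceeding $\dim X$; this uses crucially the injectivity-on-individual-simplices clause of Definition~\ref{def-join}. Once that is in place, the remainder of the proof is an elementary spot count together with the standard topological splitting of a handlebody of sufficient genus.
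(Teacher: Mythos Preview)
Your proof is correct and follows the same overall strategy as the paper: transfer the dimension computation to the piece complex via the complete-join map $\Pi$, then bound the dimension above and below. The details differ in two small but genuine ways. For the upper bound, the paper bypasses $\Pc$ entirely and observes directly in the cube complex that the descending cells of $v$ are cubes of dimension at most $r$ (one can remove at most one piece per spot), so $\dl(v,\Xc)$ has dimension at most $r-1$; your spot-counting argument inside $\Pc(M_g,A)$ reaches the same conclusion from the other side of $\Pi$. For the lower bound, the paper just says one can ``select $r$ disjoint pieces'' for large $h(v)$; your explicit decomposition of $M_g$ into solid tori, each carrying one spot of $A$, is a more careful implementation of the same idea and has the merit of not depending on how the pieces of the particular suited handlebody representing $v$ are distributed among the $r$ arms. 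Your verification that complete joins preserve dimension is a useful addition that the paper leaves implicit.
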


\begin{proof}[\sc Proof of Lemma {\rm \ref{lem(5)}}]
For sufficiently large $h(v)$ there are simplices in $\Pc(M_g,A)$ of dimension $r-1$, we can obtain one by selecting $r$ disjoint pieces. So $\dl(v,\Xc)=\Pi^{-1}(\Pc(M_g,A))$ also has dimension at least $r-1$ because of property (2) in Definition 
\ref{def-join}. On the other hand, $\Pc(M_g,A)$ cannot have higher dimension as the descending cell of $v$ with highest dimension has exactly dimension $r$.
\end{proof}

Before checking Condition (6) in Theorem \ref{brown2}, we need one last result, stated and proven in \cite[Lemma 4.5]{SurfaceHoughton}. A direct translation of the proof applies to our case.

\begin{lem}\label{lemainffibre}
    For every vertex $x\in\Pc(M_g,A)$, the fiber $\Pi^{-1}(x)\subset\dl(v,\Xc)$ is infinite.
\end{lem}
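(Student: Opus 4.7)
My plan is to construct an injection from an infinite group --- the handlebody group $\H(Y_0, A_{Y_0})$ of a single piece $Y_0$ of $\T$ --- into the fiber $\Pi^{-1}(x)$. To set up, fix $x=[Z]\in\Pc(M_g,A)$. Since $\dl(v,\Xc)$ is a complete join over $\Pc(M_g,A)$ via $\Pi$ by Proposition \ref{teo-linkpiece}, the map $\Pi$ is surjective, so I may pick a base vertex $[(N',g)]\in \Pi^{-1}(x)$ coming from a decomposition $N=N'\natural Y_0$ with $Y_0$ a piece of $\T$, $(N,g)\sim (M,\id)$, and $g(Y_0)=Z$.

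Given $\tau_0\in\H(Y_0,A_{Y_0})$, I extend $\tau_0$ to a self-homeomorphism $\tau$ of $\T$ by setting $\tau|_{Y_0}=\tau_0$ and $\tau|_{\T\setminus Y_0}=\id$. This is continuous because $\tau_0$ fixes $A_{Y_0}$ pointwise, and $A_{Y_0}$ is precisely the interior interface $\partial Y_0\cap\int(\T)$ between $Y_0$ and the rest of $\T$; the free part $\partial Y_0\setminus A_{Y_0}\subset\partial\T$ imposes no matching constraint. Because $\tau$ is the identity outside the suited handlebody $N$, it defines an element of $\H$. I then set $\Phi(\tau_0):=[(N',g\tau)]$. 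One checks directly that $\Phi(\tau_0)\in\Pi^{-1}(x)$: $(g\tau)(N)=M$ since $\tau$ preserves $N$ setwise, $g\tau$ is rigid outside $N$ since both $g$ and $\tau$ are, and $(g\tau)(Y_0)=g(Y_0)=Z$ since $\tau$ preserves $Y_0$ setwise.

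The crux is injectivity of $\Phi$. If $\Phi(\tau_{1,0})\sim\Phi(\tau_{2,0})$, unpacking the relation $\sim$ on $\P$ produces a representative $h$ of the isotopy class of $\tau_2^{-1}\tau_1\in\H$ with $h(N')=N'$ and $h$ rigid outside $N'$. Rigidity on the piece $Y_0$ (which lies outside $N'$ but inside $N$) forces $h|_{Y_0}$ to equal the rigid identity on $Y_0$, and the original representative $\tau_2^{-1}\tau_1$ restricts on $Y_0$ to $\tau_{2,0}^{-1}\tau_{1,0}$, so one must conclude that $\tau_{2,0}^{-1}\tau_{1,0}=\id$ in $\H(Y_0,A_{Y_0})$. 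I expect this implication to be the main technical obstacle: it amounts to the handlebody analog of the classical subsurface-injectivity statement, namely that the extension-by-identity map $\H(Y_0,A_{Y_0})\to\H$ is injective. Granting this --- which is natural because $Y_0$ meets its complement in $N$ only along the single spot $A_0$, so the handlebody group of $Y_0$ and that of $N'$ embed into the handlebody group of $N$ as commuting subgroups intersecting trivially --- one obtains $\tau_{1,0}=\tau_{2,0}$, and hence $\Phi$ is injective. Since $\H(Y_0,A_{Y_0})$ is infinite by the Dehn-twist $\ZZ$ subgroup in the capping sequence of Lemma \ref{lem-sec}, we conclude that $\Pi^{-1}(x)$ is infinite.
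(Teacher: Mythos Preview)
Your overall strategy---manufacturing infinitely many preimages of $x$ by composing $g$ with homeomorphisms supported on the removed piece $Y_0$---is the natural one, and is in the spirit of what the paper intends (the paper itself gives no argument, deferring to \cite[Lemma~4.5]{SurfaceHoughton}). However, the injectivity claim as you state it is false.

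You assert that $\H(Y_0,A_{Y_0})$ and $\H(N')$ embed in $\H(N)$ as commuting subgroups \emph{intersecting trivially}. They do not: the Dehn twist $T_{\partial D}$ about the boundary circle of the separating disc $D=Y_0\cap N'$ lies in both images, since it can be supported in a collar of $\partial D$ on either side of $D$. Concretely, taking $\tau_0=T_{\partial D}\in\H(Y_0,A_{Y_0})$, the class $\tau^{-1}=(g\tau)^{-1}g\in\H$ has a representative supported in $N'$ (the same twist realised on the $N'$-side), which maps $N'$ to $N'$ and is literally the identity---hence rigid---outside $N'$. This witnesses $(N',g)\sim(N',g\tau)$, so $\Phi(T_{\partial D})=\Phi(\id)$ and $\Phi$ is not injective.

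The repair is easy. Either observe that $\Phi$ descends to an injection of $\H(Y_0,A_{Y_0})/\langle T_{\partial D}\rangle$, which is still infinite (by the capping sequence of Lemma~\ref{lem-sec} this quotient is $\H(Y_0,A_{Y_0}\setminus\{D\},p)$); or, more directly, replace $\H(Y_0,A_{Y_0})$ by the infinite cyclic subgroup $\langle T_\mu\rangle$ generated by the twist about the meridian $\mu=\partial E$ of the unique essential disc $E\subset Y_0$. For the latter your argument genuinely goes through: passing to the boundary surface via $\R$, a class supported on $S_{N'}$ fixes every simple closed curve in the disjoint subsurface $S_{Y_0}$, whereas $T_\mu^{k}$ moves a longitude of $Y_0$ unless $k=0$.

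A minor additional gap: your deduction that $h|_{Y_0}$ is the rigid identity tacitly assumes $h(Y_0)=Y_0$. Rigidity alone only says $h(Y_0)$ is \emph{some} piece adjacent to $N'$; to pin it down you should note that $h$, being isotopic to the compactly supported $\tau_2^{-1}\tau_1$, acts trivially on the end space of $\T$ and hence cannot permute the branches of $\T\setminus N'$.
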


We now have all the necessary components to prove that Condition (6) holds in the star case:

\begin{lem}\label{lem(6)}
    The action of $\H_{1,r}(O,Y)$ on $\T_{1,r}(O,Y)$ satisfies Condition (6) in Theorem \ref{brown2}: for each critical value $s$ there exists a vertex $v\in \Xc_{1,r}(O,Y)$ with non-contractible descending link and height $h(v)\geq s$.
\end{lem}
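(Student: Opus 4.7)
The plan is to prove that $H_{r-1}(\dl(v,\Xc))$ is non-trivial for every vertex $v$ of sufficiently large height, which directly rules out contractibility. Given any critical value $s$, one then takes any $v$ with $h(v)\geq \max\{s,s_0\}$, where $s_0$ is the threshold above which Lemma~\ref{lem(5)} guarantees $\dim \dl(v,\Xc)=r-1$; vertices of arbitrarily large height are abundant, so arranging $h(v)\geq s$ is harmless.

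First I would fix $v=[(M,\id)]$ with $h(v)\geq s_0$ and exhibit an $(r-1)$-simplex $\sigma=[x_0,\ldots,x_{r-1}]\in \Pc(M_g,A)$. Since $\dl(v,\Xc)$ has dimension $r-1$ by Lemma~\ref{lem(5)} and is a complete join over $\Pc(M_g,A)$ via $\Pi$ (Proposition~\ref{teo-linkpiece}), the piece complex itself has dimension at least $r-1$, so such $\sigma$ exists; concretely one can take the outermost piece on each of the $r$ chains of pieces hanging off $O_0$. By the complete join property, $\Pi^{-1}(\sigma)$ decomposes as the join $\Pi^{-1}(x_0)*\cdots *\Pi^{-1}(x_{r-1})$, and by Lemma~\ref{lemainffibre} each factor is an infinite discrete vertex set.

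Next I would pick two distinct vertices $a_i^{0},a_i^{1}\in \Pi^{-1}(x_i)$ in each fiber and consider the subcomplex of $\dl(v,\Xc)$ they span, namely the iterated join $\{a_0^{0},a_0^{1}\}*\cdots*\{a_{r-1}^{0},a_{r-1}^{1}\}$. Combinatorially, this is the boundary of the $r$-dimensional cross-polytope, hence a simplicial $(r-1)$-sphere embedded in $\dl(v,\Xc)$; its fundamental class is an $(r-1)$-cycle $\zeta$. Finally, Lemma~\ref{lem(5)} closes the argument: $\dl(v,\Xc)$ has dimension exactly $r-1$, so the simplicial chain complex vanishes in degree $r$, which forces $\zeta$ not to be a boundary and gives $H_{r-1}(\dl(v,\Xc))\neq 0$. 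The descending link is therefore non-contractible.

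The only genuine content beyond organization is the identification of the iterated join of $r$ copies of $\SS^{0}$ with $\SS^{r-1}$; everything else follows transparently from the complete-join structure of Proposition~\ref{teo-linkpiece}, the infinitude of fibers from Lemma~\ref{lemainffibre}, and the sharp dimension bound from Lemma~\ref{lem(5)}. No serious obstacle is expected.
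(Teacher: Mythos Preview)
Your proposal is correct and follows essentially the same approach as the paper: pick a top-dimensional simplex in $\Pc(M_g,A)$, use Lemma~\ref{lemainffibre} to choose two preimages in each fiber, observe that the resulting join of $r$ copies of $\SS^0$ is an $(r-1)$-sphere, and conclude it is homologically nontrivial since $\dl(v,\Xc)$ has no $r$-simplices by Lemma~\ref{lem(5)}. Your write-up is simply more explicit about invoking the complete-join structure from Proposition~\ref{teo-linkpiece}, which the paper uses tacitly.
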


\begin{proof}[\sc Proof of Lemma {\rm \ref{lem(6)}}]
We will use Lemma \ref{lemainffibre}. Taking any simplex of maximal dimension $\sigma=\{v_0,...,v_{r-1}\}$, each of the preimages of the vertices $v_i$ is infinite, particularly it contains two points $\{\pm v_i\}$. All of this pairs of preimages span a $(r-1)$-sphere, which is not homologically trivial as there are no $r$-simplices. So for sufficient large critical values, every descending link is non-contractible. 
\end{proof}

This completes the proof of Theorem \ref{Main2}, since every condition in both Theorem \ref{brown1} and Theorem \ref{brown2} is satisfied for $l=r-1$.

\section{Homology of the asymptotically rigid handlebody group}\label{Sect5}

This section is dedicated to studying the homology of $\H_{d,r}(O,Y)$, particularly in the case where $r=1$ and $d=2$. We focus on this case because the related Higman-Thompson group is the classical Thompson group $V=V_{2,1}$, which is known to be simple \cite{Simple} and acyclic \cite{Acyclic}, a crucial fact for our argument. This section follows the ideas in the proof of \cite[Theorem 3.1]{Fun-Kap2}, with similar arguments employed in \cite{Asymp, Ara-Fun}.

In Subsection \ref{CompacSup}, we defined $\H^c_{d,r}(O,Y)$ as the direct limit
$$\H^c_{d,r}(O,Y):=\underset{\longrightarrow} \lim \, \H(O_k),$$ were the $O_k$ are the manifolds involved in the construction of $\T_{d,r}(O,Y)$. Since the inclusion property holds for handlebodies (see Appendix \ref{Ap1}), the embedding $O_{k_1}\hookrightarrow O_{k_2}$ induces a monomorphism $j:\H(O_{k_1})\hookrightarrow \H(O_{k_2})$, which in turn induces a homomorphism on homology $j_{i*}:H_i(\H(O_{k_1}))\rightarrow H_i(\H(O_{k_2}))$ for each $i$. A key result by Hatcher and Wahl \cite[Theorem 1.8]{Disk} establishes the homological stabilization of handlebody groups. We restate this result in a form suited to our context below.

\begin{theor}
    Given any handlebody $O$, and a genus-one handlebody $Y$, if the genus of $O_{k}$ is greater or equal than $2i+4$, then
    $$H_i(\H(O_{k}))\cong H_i(\H(M_g)),$$
    where $M_g$ is any handlebody of genus $g\geq 2i+4$.
\end{theor}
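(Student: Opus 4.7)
The statement is essentially a reformulation of Hatcher--Wahl's homological stability theorem for handlebody groups \cite[Theorem 1.8]{Disk} in the notation of the present paper. My plan is to verify that the inclusion-induced maps $j_{O_k,O_{k+1}}$ fit into Hatcher--Wahl's stabilization framework, and then to invoke their result directly.

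First, I would decompose each inclusion $O_k \hookrightarrow O_{k+1}$ as a finite composition of elementary boundary-connect sums with copies of $Y$, each attaching one genus-one spotted handlebody along a single spot. By construction (see Subsection~\ref{Subsect1.2}), each such elementary attachment raises the genus by exactly one and modifies the set of spots in a controlled way; up to relabelling spots, it coincides with the standard stabilization map considered by Hatcher--Wahl. Thus $j_{O_k,O_{k+1},i*}\colon H_i(\H(O_k))\to H_i(\H(O_{k+1}))$ is a composition of Hatcher--Wahl stabilization maps on $H_i$.

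Second, the hypothesis $\mathrm{genus}(O_k)\geq 2i+4$ guarantees that every intermediate spotted handlebody appearing in the decomposition above has genus at least $2i+4$, so every elementary stabilization lies within Hatcher--Wahl's stable range. Their theorem therefore implies that each such map is an isomorphism on $H_i$, and composing gives $H_i(\H(O_k))\cong H_i(\H(O_{k+n}))$ for every $n\geq 0$. To compare with a general handlebody $M_g$ of genus $g\geq 2i+4$, realize $M_g$ as a spotted subhandlebody of some $O_{k'}$ with $k'\gg 0$ (possible since $O_{k'}$ has arbitrarily large genus as $d\geq 2$) and run the same stability argument to identify $H_i(\H(M_g))$ with $H_i(\H(O_{k'}))$, hence with $H_i(\H(O_k))$.

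The main obstacle I anticipate is the spot-structure bookkeeping: each copy of $Y=Y^d$ carries $d+1\geq 3$ spots, so a boundary-connect sum with $Y$ alters the spot count nontrivially, and the precise form of the stabilization map in \cite{Disk} must be matched to the inclusions in our construction. This is handled by repeatedly invoking the capping and Birman exact sequences of Lemma~\ref{lem-sec} to trade spots for marked points (or erase them) while remaining inside the stable range $g\geq 2i+4$; the Birman kernel is a surface group and the capping kernel is $\ZZ$, both of which the Hatcher--Wahl stable range easily absorbs. This is a standard but somewhat technical reduction that completes the identification.
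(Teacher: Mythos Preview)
The paper does not supply a proof of this theorem at all: it is stated immediately after the sentence ``A key result by Hatcher and Wahl \cite[Theorem 1.8]{Disk} establishes the homological stabilization of handlebody groups. We restate this result in a form suited to our context below.'' and is treated purely as a citation. Your proposal is therefore not being compared against an argument in the paper but against a bare reference.

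That said, your plan is the right way to justify the citation, and is essentially correct. Two small remarks. First, your parenthetical ``possible since $O_{k'}$ has arbitrarily large genus as $d\geq 2$'' is unnecessary: each piece $Y^d$ has genus one, so the genus of $O_{k'}$ grows without bound for every $d\geq 1$. Second, the spot-bookkeeping you flag as the main obstacle is already absorbed by the generality of \cite[Theorem~1.8]{Disk}: Hatcher--Wahl's stabilization theorem is stated for mapping class groups of $3$-manifolds relative to arbitrary marked discs in the boundary, and their stable range depends only on the genus, so the passage between $\H(O_k)$ (spotted) and $\H(M_g)$ (unspotted) is covered directly by their result without needing the detour through Lemma~\ref{lem-sec}. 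Invoking the Birman and capping sequences works too, but it is more than is required.
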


As $\H^c_{2,1}(O,Y)$ is the direct limit of the groups $\H(O_k)$, and the maps in homology stabilize, we get:

\begin{cor} \label{CorSt}
   Let $O$ be any handlebody, and let $Y$ be a genus-one handlebody. Let $M_g$ be any handlebody of genus $g\geq 2i+4$. Then
   $$H_i(\H^c_{2,1}(O,Y)) \cong H_i(\H(M_g)).$$
\end{cor}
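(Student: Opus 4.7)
The strategy is to exploit the direct-limit description $\H^c_{2,1}(O,Y) = \underset{\longrightarrow}\lim \H(O_k)$ together with the fact that group homology commutes with directed colimits, and then feed this into the Hatcher--Wahl stabilization theorem just recalled.

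First I would recall that for any directed system of groups $\{G_k, j_{k,k+1}\}$, one has a natural isomorphism
\[
H_i\bigl(\underset{\longrightarrow}\lim G_k\bigr) \;\cong\; \underset{\longrightarrow}\lim H_i(G_k),
\]
where the right-hand colimit is taken along the maps $j_{k,k+1,*}$ induced on homology. Applying this to $\{\H(O_k), j_{O_k,O_{k+1},*}\}$ gives
\[
H_i(\H^c_{2,1}(O,Y)) \;\cong\; \underset{\longrightarrow}\lim H_i(\H(O_k)).
\]

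Next I would control the colimit on the right using the homological stability theorem recalled immediately before the corollary. Since $d=2$ and each inclusion $O_k \hookrightarrow O_{k+1}$ is obtained by boundary-connect summing on genus-one pieces, the genus of $O_k$ grows without bound; in particular, for all $k$ sufficiently large (say $k \geq k_0(i)$), the genus of $O_k$ is at least $2i+4$. By the cited theorem, for all such $k$ we have $H_i(\H(O_k)) \cong H_i(\H(M_g))$, and moreover the bonding maps $j_{O_k, O_{k+1},*}$ are precisely the Hatcher--Wahl stabilization maps, which are isomorphisms in the stable range. Thus the tail of the directed system $\{H_i(\H(O_k))\}_{k \geq k_0(i)}$ is a system of isomorphisms, so its colimit is canonically isomorphic to $H_i(\H(M_g))$. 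Combining with the previous paragraph yields
\[
H_i(\H^c_{2,1}(O,Y)) \;\cong\; H_i(\H(M_g)),
\]
which is the claim.

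The only genuine subtlety I anticipate is verifying that the bonding maps $j_{O_k,O_{k+1}}$ really do fit the Hatcher--Wahl framework as ``genus-raising'' stabilization maps (as opposed to some other natural operation on handlebody groups), so that their induced maps on homology are the ones known to stabilize. This is essentially a matter of inspecting the construction in Subsection \ref{Subsect1.2}: $O_{k+1}$ is obtained from $O_k$ by boundary-connect summing a copy of the genus-one piece $Y^d$ at each spot, which on handlebody groups is exactly the operation covered by \cite[Theorem~1.8]{Disk}. Once this identification is made, everything else is formal.
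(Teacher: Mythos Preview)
Your proposal is correct and follows exactly the same approach as the paper, which dispatches the corollary in a single sentence (``As $\H^c_{2,1}(O,Y)$ is the direct limit of the groups $\H(O_k)$, and the maps in homology stabilize, we get\ldots''). You have simply spelled out the two ingredients---homology commutes with directed colimits, and the Hatcher--Wahl stabilization identifies the tail of the system---in more detail than the paper does.
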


In particular, since each $\H(M_g)$ is of type $F_\infty$ (by \cite[Theorem 1.1 a, Theorem 6.1]{McCullough}) every homology group $H_i(\H(M_g))$ is finitely generated. Thus we obtain:

\begin{cor}\label{CorFin}
   The group $H_i(\H^c_{2,1}(O,Y))$ is finitely generated for every $i\geq 0$.
\end{cor}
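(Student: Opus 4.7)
The plan is to deduce the finiteness of each homology group directly from Corollary \ref{CorSt} together with McCullough's theorem on handlebody groups. Specifically, I would fix an integer $i\geq 0$ and then produce a handlebody $M_g$ whose genus satisfies $g\geq 2i+4$; this is always possible, since we are free to choose $g$ as large as we need. With this choice in hand, Corollary \ref{CorSt} gives a canonical isomorphism
$$H_i(\H^c_{2,1}(O,Y)) \cong H_i(\H(M_g)),$$
so the finite generation of the left-hand side is reduced to the finite generation of the right-hand side.

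Next, I would invoke the theorem of McCullough cited in the excerpt (\cite[Theorem 1.1 a, Theorem 6.1]{McCullough}), which asserts that the handlebody group $\H(M_g)$ is of type $F_\infty$. Being of type $F_\infty$ implies being of type $FP_\infty$, as noted in Section \ref{Sect2}, and in particular of type $FP_{i+1}$. From the existence of a projective resolution of $\mathbb{Z}$ over $\mathbb{Z}[\H(M_g)]$ whose first $i+1$ terms are finitely generated, a standard argument shows that each integral homology group $H_i(\H(M_g))$ is a finitely generated abelian group.

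Combining these two steps yields that $H_i(\H^c_{2,1}(O,Y))$ is finitely generated for the chosen $i$, and since $i\geq 0$ was arbitrary the corollary follows. There is no real obstacle in this proof: the heavy lifting has already been done in Corollary \ref{CorSt}, which transfers the computation to a finitely generated handlebody group, and in the cited theorem of McCullough that supplies the $F_\infty$ property. The only thing to be careful about is the bookkeeping of the genus bound $g\geq 2i+4$, which must hold for the chosen $i$, but this is harmless because the isomorphism in Corollary \ref{CorSt} allows $g$ to depend on $i$.
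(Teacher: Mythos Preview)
Your proposal is correct and follows essentially the same argument as the paper: apply Corollary~\ref{CorSt} to identify $H_i(\H^c_{2,1}(O,Y))$ with $H_i(\H(M_g))$ for a suitable $g$, then invoke McCullough's $F_\infty$ result to conclude that the latter is finitely generated. The paper states this in a single sentence preceding the corollary, while you spell out the $F_\infty \Rightarrow FP_\infty \Rightarrow$ finitely generated homology implication more explicitly, but the substance is identical.
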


The connection between the homological stability results for handlebody groups, and the homology of the asymptotically rigid handlebody becomes apparent in the following result. Its proof closely follows the proof of \cite[Proposition 10.5]{Asymp}.

\begin{theor} \label{TeoHomo}
   Let $O$ be any handlebody, and let $Y$ be a genus-one handlebody. For every $i\geq 0$, $$H_i(\H_{2,1}(O,Y))\cong H_i(\H^c_{2,1}(O,Y)).$$

\end{theor}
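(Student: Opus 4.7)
The plan is to apply the Lyndon--Hochschild--Serre spectral sequence to the short exact sequence
$$1\to \H_{2,1}^c(O,Y)\to \H_{2,1}(O,Y)\to V_{2,1}\to 1$$
introduced in Subsection \ref{SubRelThom}, where $V=V_{2,1}$ denotes the classical Thompson group. This yields a first-quadrant spectral sequence
$$E^2_{p,q}=H_p(V;H_q(\H^c_{2,1}(O,Y)))\Longrightarrow H_{p+q}(\H_{2,1}(O,Y)).$$
The goal is to show that $E^2_{p,q}=0$ for all $p>0$ and $E^2_{0,q}=H_q(\H^c_{2,1}(O,Y))$, so that the sequence collapses at the $E^2$ page and gives the desired isomorphism.

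The first step is to verify that the conjugation action of $V$ on $H_q(\H^c_{2,1}(O,Y))$ is trivial. Given a lift $\tilde v\in\H_{2,1}(O,Y)$ of $v\in V$ and an element $f\in\H(O_k)$, the conjugate $\tilde v f\tilde v^{-1}$ is supported on $\tilde v(O_k)$, which is contained in some larger $O_{k'}$. The key observation is that, since $\tilde v$ is asymptotically rigid, the conjugation can be realized, after stabilizing to a large enough $O_{k''}$, by an element of $\H(O_{k''})$ that is itself isotopic to the identity outside a compact region; thus, by the homological stability of Hatcher--Wahl used in Corollary \ref{CorSt}, both $f$ and $\tilde v f\tilde v^{-1}$ represent the same class in $H_q(\H^c_{2,1}(O,Y))=\varinjlim H_q(\H(O_k))$. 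I would make this rigorous by mimicking the ``shift trick'' of \cite[Proposition 10.5]{Asymp}, carefully tracking how pieces are permuted by a defining handlebody of $\tilde v$.

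The second step invokes the acyclicity of $V_{2,1}$, due to Brown--Geoghegan \cite{Acyclic}: $H_p(V;\mathbb{Z})=0$ for all $p>0$. Combined with Step 1, the universal coefficient theorem applied to the trivial $V$-module $H_q(\H^c_{2,1}(O,Y))$ yields
$$H_p(V;H_q(\H^c_{2,1}(O,Y)))\cong \bigl(H_p(V;\mathbb{Z})\otimes H_q(\H^c_{2,1}(O,Y))\bigr)\oplus \mathrm{Tor}\bigl(H_{p-1}(V;\mathbb{Z}),H_q(\H^c_{2,1}(O,Y))\bigr),$$
which vanishes for $p>0$ and equals $H_q(\H^c_{2,1}(O,Y))$ for $p=0$. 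The spectral sequence then degenerates and the claimed isomorphism follows.

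The main obstacle is Step 1, namely verifying that the conjugation action of $V$ on the stable homology is trivial. This requires producing, for each asymptotically rigid lift $\tilde v$ and each class in $H_q(\H(O_k))$, an explicit enlargement $O_{k''}\supset O_{k'}$ in which $f$ and $\tilde v f\tilde v^{-1}$ become related by a compactly supported homeomorphism that becomes homologically trivial in the stable range. This hinges on combining the cancellation/inclusion properties of handlebodies (Appendix \ref{Ap1}) with the stable range given by Hatcher--Wahl; the other two steps are essentially formal once triviality of the action is in hand.
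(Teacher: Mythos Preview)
Your overall strategy matches the paper's exactly: apply the Lyndon--Hochschild--Serre spectral sequence to the extension by $V=V_{2,1}$, show that the $V$-action on $H_q(\H^c_{2,1}(O,Y))$ is trivial, and invoke the acyclicity of $V$ to collapse the sequence. Step~2 and the collapse are the paper's argument verbatim.

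The substantive difference lies in Step~1. The paper does not carry out any geometric ``shift trick''. Instead it observes that, by Corollary~\ref{CorFin}, each $A_q:=H_q(\H^c_{2,1}(O,Y))$ is a finitely generated abelian group, hence residually finite; by a theorem of Baumslag, $\Aut(A_q)$ is then residually finite. Since $V$ is simple, every homomorphism $V\to\Aut(A_q)$ must be trivial. This three-line argument replaces your entire Step~1 and sidesteps the need to track conjugates of pieces or to construct compactly supported approximations of $\tilde v$. Your geometric route can in principle be made to work---one needs, for a lift $\tilde v$ and each $O_k$, a large enough $O_{k'}$ and an element $g\in\H(O_{k'})$ agreeing with $\tilde v$ on $O_k$, so that conjugation by $\tilde v$ becomes inner in $\H(O_{k'})$---but note that homological stability plays no role in that step: inner automorphisms act trivially on homology unconditionally, so your appeal to Hatcher--Wahl there is misplaced.

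A minor bibliographic point: the acyclicity of $V$ is due to Szymik--Wahl, not Brown--Geoghegan.
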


\begin{proof}[\sc Proof of Theorem {\rm \ref{TeoHomo}}]
As we saw in Subsection \ref{SubRelThom}, there is a short exact sequence:  

$$1\rightarrow\H^c_{2,1}(O,Y)\xrightarrow{}\H_{2,1}(O,Y)\xrightarrow{} V\rightarrow 1.$$

We apply the Lyndon-Hochschild-Serre spectral sequence, getting

$$E^2_{p,q}=H_p(V,H_q(\H_{2,1}^c(O,Y),\ZZ))\Rightarrow H_i(\H_{2,1}(O,Y),\ZZ).$$

For convenience, we write $A_q=H_q(\H_{2,1}^c(O,Y),\ZZ)$. Because of Corollary \ref{CorFin}, we know $A_q$ is finitely generated and abelian, particularly it is residually finite. By \cite[Theorem 1]{Baumslag}, $\Aut(A_q)$ is residually finite. On the other hand, the group $V$ is simple \cite{Simple}. These two fact combined imply that any homomorphism $V\rightarrow A_q$ is trivial, and consequently, any action of $V$ in $A_q$ must be trivial. Since $V$ is acyclic \cite{Acyclic}, every $H_p(V,A_q)$ for $p\geq 1$ must be trivial and $H_0(V,A_q)\cong A_q$. Thus the spectral sequence collapses, and $H_i(\H_{2,1}(O,Y),\ZZ)\cong H_i(\H_{2,1}^c(O,Y),\ZZ)$ for all $i$.

\end{proof}

A combination of Theorem \ref{TeoHomo} and Corollary \ref{CorSt} yields Corollary \ref{Corst2}, which states what is the homology of the asymptotically rigid handlebody group.

\appendix
\section{Inclusion, intersection, and cancellation properties}\label{Ap1}

In this appendix we introduce the inclusion, intersection, and cancellation properties, and prove that tree handlebodies satisfy them. We will deduce these properties from the fact that they are satisfied for surfaces, which is established in \cite[Apendix B1]{Asymp}.

\begin{defi}(Inclusion property). We say that $\T_{d,r}(O,Y)$ has the \textit{inclusion property} if the following holds: Let $M$ be a suited handlebody, and $N\subset M$ a connected spotted subhandlebody that is either suited or the complement of a suited handlebody with at least two spots. Then the homomorphism $\H(N)\rightarrow\H(M)$ induced by the inclusion map $N\hookrightarrow M$ is injective.
\end{defi}

\begin{lem} \label{inclusion}
    For all $d,r\geq 1$, $\T_{d,r}(O,Y)$ satisfies the inclusion property.
\end{lem}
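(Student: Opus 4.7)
The plan is to reduce the statement to its surface counterpart, which is established in \cite[Appendix B1]{Asymp}, by leveraging the injective restriction-to-boundary homomorphism $\R \colon \H(M,A) \hookrightarrow \Map(S_{(M,A)})$ introduced in Subsection \ref{Sect1}.

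Given $N \subset M$ as in the hypothesis, I would first observe that the boundary surface $S_N$ embeds naturally into $S_M$. Indeed, by the definition of spotted subhandlebody, every spot of $M$ lying in $\partial N$ is also a spot of $N$, while the extra spots of $N$ correspond to the separating discs $\overline{\partial N \cap \textnormal{int}(M)}$ in the interior of $M$. Hence $S_N = \overline{\partial N \setminus A_N}$ sits inside $S_M$ as a genuine subsurface, and in both admissible cases (either $N$ suited, or $N$ the complement of a suited handlebody with at least two spots) the pair $S_N \subset S_M$ is precisely of the type to which the surface inclusion property of \cite[Appendix B1]{Asymp} applies; the two-spot hypothesis in the complement case mirrors the analogous restriction needed on the surface side. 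I would then assemble the commutative square with horizontal arrows $i_* \colon \H(N) \to \H(M)$ and $j_* \colon \Map(S_N) \to \Map(S_M)$, induced by extending homeomorphisms and mapping classes by the identity on $\overline{M \setminus N}$ and $\overline{S_M \setminus S_N}$ respectively, and vertical arrows $\R_N$ and $\R_M$. Commutativity is immediate because restriction to the boundary and extension by the identity commute.

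Finally, the vertical maps are injective by \cite[Section 3]{Hensel}, and the bottom map $j_*$ is injective by the surface version of the inclusion property. A straightforward diagram chase then forces $i_*$ to be injective as well, which is the statement of the lemma. The main point requiring care, and the only real obstacle I anticipate, is the bookkeeping needed to confirm that $S_N \subset S_M$ genuinely meets the hypotheses of the surface inclusion property in every configuration permitted by the lemma (in particular, keeping track of which spots of $S_N$ come from $A_M$ and which come from the new separating discs); once this verification is done, the rest of the argument is entirely formal.
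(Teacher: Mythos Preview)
Your proposal is correct and follows essentially the same approach as the paper: both set up the commutative square with vertical maps $\R$ and horizontal inclusion-induced maps, then combine injectivity of $\R$ (from \cite{Hensel}) with the surface inclusion property to conclude via a diagram chase. The only minor difference is that the paper cites \cite[Section 3.6]{MCG} rather than \cite[Appendix B1]{Asymp} for the surface side, and does not spell out the bookkeeping on $S_N \subset S_M$ that you flag as the main point requiring care.
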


\begin{proof}[\sc Proof of Lemma {\rm \ref{inclusion}}]
Recall the map $\R$ in Subsection \ref{relation}. Consider the following diagram,

\[
\begin{tikzcd}[row sep=0.4cm,column sep=0.4cm]
\H(N)\arrow[d,"\R"] \arrow[hookrightarrow]{r}{i}& \H(M)\arrow[d,"\R"] \\
\Map(S_{N}) \arrow[hookrightarrow]{r}{i}& \Map(S_{M})
\end{tikzcd}
\]

The diagram commutes, as given $f\in\H(N)$ and a representative $f_1$ of $f$, restricting $f_1$ to $S_{N}$ and then extending it by the identity to $S_{M}$, yields the same homeomorphism as extending $f_1$ to $M$ and then restricting it to $S_{M}$. Let $g$ and $f$ be distinct elements of $\H(N)$. Restriction to the boundary is injective because of \cite[Lemma 3.1]{Hensel} so $\R(g)\neq \R(f)$. Because of the inclusion property for surfaces (see \cite[Section 3.6]{MCG}), $i(\R(g))\neq i(\R(f))$. Since the diagram commutes, it follows that $\R(i(g))\neq \R(i(f))$. Restriction to the boundary is well defined, so $i(g)\neq i(f)$. 

\end{proof}

\begin{defi}(Intersection property). Let $\T_{d,r}(O,Y)$ be a tree handlebody with the inclusion property. We say that $\T_{d,r}(O,Y)$ has the \textit{intersection property} if, for every suited handlebody $M$, the following holds: Let $L_1$ and $L_2$ be disjoint spotted subhandlebodies, each homeomorphic to $Y^d$, such that all but one of the spots in $L_i$ are spots in $M$. Write $M_i$ for the complement of $L_i$ in $M$, and set $N=M_1\cap M_2$. Then $\H(N)=\H(M_1)\cap \H(M_2)$.
\end{defi}

\begin{lem}\label{leminter}
    Let $N\subset M$ be two spotted subhandlebodies of $\T_{d,r}(O,Y)$. Then, $\H(N)=\Map(S_{N})\cap\H(M)$ as subgroups of $\Map(S_{M})$.
\end{lem}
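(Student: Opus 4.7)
The plan is to establish both inclusions $\H(N) \subseteq \Map(S_N) \cap \H(M)$ and $\H(N) \supseteq \Map(S_N) \cap \H(M)$ separately.

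The forward inclusion is essentially bookkeeping: given $g \in \H(N)$, extending any representative by the identity on $\overline{M \setminus N}$ yields an element of $\H(M)$ (well-defined on isotopy classes by Lemma \ref{inclusion}), whose image under $\R$ has support contained in $S_N$ and hence lies in $\Map(S_N) \subseteq \Map(S_M)$.

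For the reverse inclusion, I would start with $\phi \in \Map(S_N) \cap \H(M)$ and construct a representative in $\H(N)$. Pick $f \in \H(M)$ with $\R(f) = \phi$. Since $\phi$ admits a representative on $\partial M$ supported in $S_N$, and boundary isotopies lift to ambient isotopies of a compact $3$-manifold, I can modify $f$ within its isotopy class in $\H(M)$ so that $f|_{\partial M}$ itself is the identity on $\partial M \setminus S_N$. In particular $f$ pointwise fixes each boundary circle $\partial D_i$, where $\{D_i\} = \overline{\partial N \cap \int(M)}$ is the system of discs separating $N$ from $\overline{M \setminus N}$ inside $M$.

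The main geometric step is to isotope $f$ further, while keeping it in $\H(M)$, so that $f$ fixes each $D_i$ pointwise. Any two properly embedded discs in a handlebody sharing the same boundary circle are isotopic rel boundary; applying this inductively, with each step's isotopy chosen to be supported in the complementary handlebody of the discs already fixed, yields $f(D_i) = D_i$ for all $i$ simultaneously. The Alexander trick applied disc by disc then isotopes $f$ to satisfy $f|_{D_i} = \id_{D_i}$ for each $i$. At this point $f$ preserves $N$ and fixes the spot set $B = (A \cap N) \cup \{D_i\}$ pointwise, so $g := f|_N$ is an element of $\H(N)$ whose image in $\Map(S_M)$ agrees with $\phi$ by construction.

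The chief obstacle is orchestrating the disc-system surgery: the isotopies modifying $f$ near different $D_i$ must not interfere with each other nor undo the identity already achieved on $\partial M \setminus S_N$. The inductive choice of a supporting region disjoint from previously fixed data handles this, but executing it carefully is the main technical work; everything else reduces to standard handlebody techniques and the injectivity of $\R$ from \cite[Lemma 3.1]{Hensel}.
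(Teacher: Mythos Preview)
Your proposal is correct and follows essentially the same route as the paper: the forward inclusion is immediate, and for the reverse inclusion both arguments first arrange for a representative homeomorphism of $M$ whose restriction to $\partial M$ is the identity outside $S_N$, then use the fact that properly embedded discs with common boundary are isotopic (\cite[Lemma 2.3]{Hensel}) to fix the separating discs, and finally invoke \cite[Lemma 3.1]{Hensel}. The only difference is presentational: where you appeal to isotopy extension (``boundary isotopies lift to ambient isotopies'') as a black box, the paper carries this out explicitly via a collar $S_M\times[0,1]$ interpolating between the two boundary representatives; conversely you are more explicit than the paper about the inductive care needed to fix the discs $D_i$ without disturbing previously achieved data.
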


\begin{proof}[\sc Proof of Lemma {\rm \ref{leminter}}]

The inclusion from left to right is immediate. For the other inclusion, let $f\in\Map(S_{N})\cap\H(M)\subset\Map(S_{M})$. As $f\in\Map(S_{N})$, we can take $f_1$ a representative fixing $S_{M}\setminus S_{N}$. Because $f\in\H(M)$, we can take $f_2$ a representative which extends to a self-homeomorphism $g_2$ of $M$.

Select a regular neighborhood $R_M$ of $S_{M}$ in $M$, and parametrize $R_M$ as $S_{M}\times[0,1]$ (with $S_{M}\times\{1\}$ being $S_{M}$ itself). Take an isotopy $\phi_t$ from $f_2$ to $f_1$, such that $\phi_0=f_2$ and $\phi_1=f_1$. We use $\phi_t$ to define a map $\Phi(p,x):=(\phi_{x}(p),x)$ in $S_{M}\times[0,1]$, which agrees with $f_1$ in $S_{M}\times\{1\}= S_M$, and with $f_2$ in $S_{M}\times\{0\}= \partial \overline{M\setminus R_M}$. 

The handlebody $\overline{M\setminus R_M}$ is homeomorphic to $M$. Let $h:\overline{M\setminus R_M}\rightarrow M$ be a homeomorphism. We define a homeomorphism $$g_3:=  \begin{cases}
       h^{-1} \circ g_2\circ h &\quad\text{in}\quad \overline{M\setminus R_M},\\
       \Phi(p,x) &\quad\text{in}\quad R_M,\\
     \end{cases}$$

which fixes $S_{M}\setminus S_{N}$, although not necessarily $M\setminus N$. However, it sends the spots of $N$ to discs with the same curve as boundary, and according to Lemma 2.3 in \cite{Hensel}, any two discs sharing their boundaries are isotopic. Composing the isotopies between discs with $g_3$, we can construct a homeomorphism whose restriction to $S_M$  is $f_1$, and that fixes $M\setminus N$ (it fixes $\partial (M\setminus N)$ so it also fixes the interior, because of \cite[Lemma 3.1]{Hensel}), hence $f\in \H(N)$.

\end{proof}

\begin{lem}\label{intersection}
    For all $d,r\geq 1$, $\T_{d,r}(O,Y)$ satisfies the intersection property.
\end{lem}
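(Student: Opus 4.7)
The plan is to reduce the intersection property for handlebodies to the corresponding property for their boundary surfaces, exactly as Lemma \ref{inclusion} reduced the inclusion property, by exploiting Lemma \ref{leminter} as the bridge between handlebody and surface mapping class groups. The key observation is that $\Map(S_{M_1})\cap \Map(S_{M_2}) = \Map(S_N)$ as subgroups of $\Map(S_M)$, which is the intersection property for surfaces established in \cite[Appendix B1]{Asymp}.

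First I would verify the inclusion $\H(N)\subseteq \H(M_1)\cap\H(M_2)$. Since $N = M_1\cap M_2$ is contained in both $M_1$ and $M_2$, and since the inclusion property (Lemma \ref{inclusion}) has already been established, the homomorphisms $\H(N)\to \H(M_i)$ are well-defined injections, and composing with $\H(M_i)\hookrightarrow \H(M)$ places $\H(N)$ inside $\H(M_1)\cap\H(M_2)$ in a consistent way.

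For the reverse inclusion, I would apply Lemma \ref{leminter} twice to write
$$\H(M_1)\cap \H(M_2) \;=\; \bigl(\Map(S_{M_1})\cap \H(M)\bigr)\cap \bigl(\Map(S_{M_2})\cap \H(M)\bigr)\;=\;\Map(S_{M_1})\cap\Map(S_{M_2})\cap\H(M).$$
Now I would invoke the intersection property for surfaces: the subsurfaces $S_{M_1}$ and $S_{M_2}$ of $S_M$ are precisely the complements of the boundary surfaces of $L_1$ and $L_2$ (with the gluing spots capped off), and since $L_1\cap L_2 = \emptyset$, these are of the form needed to apply \cite[Appendix B1]{Asymp}, yielding $\Map(S_{M_1})\cap\Map(S_{M_2}) = \Map(S_N)$. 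Substituting this in and applying Lemma \ref{leminter} one more time gives
$$\H(M_1)\cap \H(M_2) \;=\; \Map(S_N)\cap \H(M) \;=\; \H(N),$$
which is the desired equality.

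The main obstacle will be ensuring that the hypotheses for the surface intersection property are actually met after passing from the handlebody setting to boundaries. Concretely, one must check that $S_N$ is genuinely the intersection $S_{M_1}\cap S_{M_2}$ (as opposed to some proper subsurface), and that $S_{M_i}$ is a complement of a subsurface of the type required in \cite[Appendix B1]{Asymp} (each component of $\partial L_i \setminus A$ is of the appropriate form, since each $L_i$ is homeomorphic to $Y^d$ with all but one spot lying in the boundary of $M$). This is a direct but somewhat tedious inspection of how the pieces $L_1$, $L_2$ sit in $M$; once verified, the rest of the argument is a purely formal manipulation of the diagram, parallel to the proof of Lemma \ref{inclusion}.
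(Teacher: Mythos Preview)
Your proposal is correct and follows essentially the same route as the paper: both intersect the surface-level equality $\Map(S_{M_1})\cap\Map(S_{M_2})=\Map(S_N)$ with $\H(M)$ and invoke Lemma~\ref{leminter} to recover $\H(M_1)\cap\H(M_2)=\H(N)$. The only minor differences are that the paper cites the Alexander method \cite[Section~2.3]{MCG} rather than \cite[Appendix~B1]{Asymp} for the surface intersection property, and it omits the explicit verification of hypotheses that you flag in your final paragraph.
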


\begin{proof}[\sc Proof of Lemma {\rm \ref{intersection}}]

Take the equality $\Map(S_{N})=\Map(S_{M_1})\cap\Map(S_{M_2})$, which is true because of the intersection property for Cantor surfaces (a direct consequence of the Alexander method \cite[Section 2.3]{MCG}). Set $M=M_1\cup M_2$. Intersection at both sides of the equality $\Map(S_{N})=\Map(S_{M_1})\cap\Map(S_{M_2})$ by $\H(M)$ and use of Lemma \ref{leminter}, yields the equality $\H(N)=\H(M_1)\cap\H(M_2)$.

\end{proof}

\begin{defi} (Cancellation property). We say that $\T_{d,r}(O,Y)$ satisfies the cancellation property if the following holds: Let $M$ be a suited handlebody with at least one piece, and $D$ a separating disc that cuts off a spotted subhandlebody homeomorphic to $Y^d$. Then the remaining component $N\subset M$ is homeomorphic to some suited handlebody of $\T_{d,r}(O,Y)$.    
\end{defi}

\begin{lem}\label{cancellation}
     For all $d,r\geq 1$, $\T_{d,r}(O,Y)$ satisfies the cancellation property.
\end{lem}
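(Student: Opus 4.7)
The plan is to verify the cancellation property by a direct computation of topological invariants, followed by an appeal to the classification of spotted handlebodies.

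\textbf{Step 1: Invariants of $M$.} Since $M$ is suited with $n\geq 1$ pieces, by definition $M$ is obtained from $O$ by successive boundary connect sums with $n$ copies of $Y^d$. Each such boundary connect sum (gluing the spot $A_0$ of a new $Y^d$ to an existing spot of the current manifold) removes the two glued spots and contributes the remaining $d$ spots of $Y^d$. An induction on the number of pieces shows that $M$ has genus $g_M=g_O+n$ and exactly $r+n(d-1)$ spots, where $g_O$ is the genus of $O$.

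\textbf{Step 2: Invariants of $N$.} The cut-off subhandlebody $L\cong Y^d$ has genus $1$ and $d+1$ spots. By the definition of spotted subhandlebody, these $d+1$ spots of $L$ consist of the separating disc $D$ together with $d$ spots of $M$ lying in $\partial L$. Writing $N=\overline{M\setminus L}$ with its inherited spot structure (the spots of $M$ contained in $\partial N$, together with the new disc $D$), we get
\[
g_N=g_M-1=g_O+(n-1),\qquad |A_N|=(r+n(d-1))-d+1=r+(n-1)(d-1).
\]
These are precisely the invariants of a suited handlebody with $n-1$ pieces; such a handlebody $M'$ exists as a subhandlebody of $\T_{d,r}(O,Y)$ (for instance, any connected union of $O$ with $n-1$ pieces in the tree handlebody, e.g. inside $O_{n-1}$).

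\textbf{Step 3: Uniqueness up to homeomorphism.} To conclude that $N$ and $M'$ are homeomorphic as spotted handlebodies, I will invoke the following fact: two spotted handlebodies with the same genus and the same number of spots are homeomorphic via a spot-preserving homeomorphism. This follows from (a) the classical classification of connected handlebodies by genus, which gives a homeomorphism $\psi\colon N\to M'$ of the underlying $3$-manifolds, and (b) the transitivity of $\mathrm{Homeo}(M')$ on $k$-tuples of pairwise disjoint closed discs on $\partial M'$, which allows us to post-compose $\psi$ by a homeomorphism carrying $\psi(A_N)$ to $A_{M'}$.

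\textbf{Main obstacle.} The only non-routine point is the transitivity in (b): an arbitrary homeomorphism of $\partial M'$ need not extend to $M'$. However, we do not need to extend homeomorphisms of $\partial M'$—we only need to move one $k$-configuration of disjoint discs to another. Since $\partial M'$ is a connected surface, any two $k$-configurations of pairwise disjoint closed discs can be joined by an ambient isotopy of $\partial M'$ (by a standard change-of-coordinates argument for surfaces, moving one disc at a time). Applying the isotopy extension theorem to a collar neighborhood of $\partial M'$ in $M'$ and extending by the identity, we obtain an ambient isotopy of $M'$ realizing the required homeomorphism. This completes the proof.
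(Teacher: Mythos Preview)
Your proof is correct and follows essentially the same approach as the paper's: both reduce the question to the classification of spotted handlebodies by genus and number of spots, then invoke the fact that any two disc configurations on the boundary of a handlebody are related by an ambient homeomorphism. The paper's proof is terser---it simply cites the classification of handlebodies by genus and asserts the spot-matching fact without justification---whereas you explicitly compute the invariants of $N$ and supply the isotopy-extension argument for transitivity on spot configurations; this extra detail is sound and does not deviate from the paper's strategy.
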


\begin{proof}[\sc Proof of Lemma {\rm \ref{cancellation}}]

There is exactly one handlebody up to homeomorphism for each genus (see \cite[Proposition 9.2.19]{martelli}). If there is a homeomorphism between two spotted handlebodies, we can always take such homeomorphism to take spots to spots as long as the number of them agree. Choose any suited handlebody $N'$ of the same genus as $N$ and containing $O_0$, we can find a homeomorphism taking $N'$ to $N$.

\end{proof}

\bibliographystyle{plain}
\bibliography{name.bib}

\end{document}